\theoremstyle{plain}
\newtheorem{theorem}[subsection]{Theorem}
\newtheorem{lemma}[subsection]{Lemma}
\newtheorem{proposition}[subsection]{Proposition}
\newtheorem{corollary}[subsection]{Corollary}
\theoremstyle{definition}
\newtheorem{remark}[subsection]{Remark}
\newtheorem{definition}[subsection]{Definition}
\newtheorem{example}[subsection]{Example}
\newtheorem{conjecture}[subsection]{Conjecture}
\numberwithin{equation}{section} 
\begin{document}
\title[Recurrence Relation for Jones Polynomials]{Recurrence Relation for
Jones Polynomials }
\thanks{This research is partially supported by Higher Education Commission,
Pakistan.}
\author{BARBU BERCEANU$^{1}$,\,\, ABDUL RAUF NIZAMI$^{2}$}
\address{$^{1}$Simion Stoilow Institute of Mathematics,
Bucharest-Romania(permanent address) and Abdus Salam School of Mathematical
Sciences, GC University, Lahore-Pakistan.}
\email{Barbu.Berceanu@imar.ro}
\address{$^{2}$University of Education, Lahore-Pakistan.}
\email{arnizami@ue.edu.pk}
\keywords{Jones polynomial, braids, Fibonacci recurrence}
\maketitle

\begin{abstract}
Using a simple recurrence relation we give a new method to compute Jones
polynomials of closed braids: we find a general expansion formula and a
rational generating function for Jones polynomials. The method is used to
estimate degree of Jones polynomials for some families of braids and to
obtain general qualitative results.
\end{abstract}

\pagestyle{myheadings} 
\markboth{\centerline {\scriptsize
B. BERCEANU\,\,\, A. R. NIZAMI }} {\centerline {\scriptsize
Recurrence Relation for Jones Polynomials }}


\section{Introduction}

\label{sec1} The \emph{Jones polynomial} $V_{L}(q)$ of an oriented link $L$
is a Laurent polynomial in the variable $\sqrt{q}$ satisfying the skein
relation 
\begin{equation*}
q^{-1}V_{L_{+}}-qV_{L_{-}}=(q^{1/2}-q^{-1/2})V_{L_{0}},
\end{equation*}
and such that the value of the unknot is 1 (see~\cite{Jones:85},\cite%
{Jones:05},\cite{Lickorish:97}). The relation holds for any oriented links
having diagrams which are identical except near one crossing where they
differ as in the figure below:

\begin{center}
\begin{picture}(360,100)
\put(75,18){$L_+$} \put(175,18){$L_0$} \put(275,18){$L_-$}
\put(60,40){\vector(1,1){40}}
\put(300,40){\vector(-1,1){40}}
\put(75,65){\vector(-1,1){15}}
\put(285,65){\vector(1,1){15}}
\put(163,77){\vector(-1,1){3}}
\put(197,77){\vector(1,1){3}}
\put(100,40){\line(-1,1){15}}
\put(260,40){\line(1,1){15}}
\qbezier(160,40)(180,60)(160,80)
\qbezier(200,40)(180,60)(200,80)
\end{picture}
\end{center}

\noindent Any link $L$ can be obtained as a closure of a braid $\beta\in 
\mathcal{B}_{n}$ (for some $n$), $L=\widehat{\beta}$. We will use classical
Artin presentation of braids (\cite{Artin:47},\cite{Bir:74}) with generators 
$x_{1},\dots,x_{n-1}$, where $x_{i}$ is: 

\begin{center}
\begin{picture}(360,70)
\thicklines
 \put(90,60){$1$}
 \put(90,10){$\bullet$}
  \put(93,13){\line(0,1){40}}
  \put(90,50){$\bullet$}
\put(270,60){$n$}
  \put(270,10){$\bullet$}
 \put(273,13){\line(0,1){40}}
  \put(270,50){$\bullet$}
\put(124,60){$i-1$}
   \put(130,10){$\bullet$}
    \put(133,13){\line(0,1){40}}
    \put(130,50){$\bullet$}
\put(224,60){$i+2$}
     \put(230,10){$\bullet$}
     \put(233,13){\line(0,1){40}}
      \put(230,50){$\bullet$}
     \put(168,60){$i$}
       \put(170,10){$\bullet$}
       \put(170,50){$\bullet$}
       \put(184,60){$i+1$}
         \put(190,10){$\bullet$}
         \put(190,50){$\bullet$}
 \put(193,13){\line(-1,2){20}}
  \put(173,13){\line(1,2){7}}
  \put(193,53){\line(-1,-2){7}}
\put(105,30){$\cdots$}
 \put(245,30){$\cdots$}

 \put(50,30){$x_{i}$}
\end{picture}
\end{center}

\noindent Fixing the sequence ($x_{i_{1}},\ldots,x_{i_{k}}$) of generators, $%
i_h\in\{1,2,\ldots,n-1\}$, and all the exponents $a_{1},\ldots,a_{k}$ ($%
a_h\in\mathbb{Z}$), but the $j$-th exponent is variable, we have the braid $%
\beta(e)=x_{i_{1}}^{a_{1}}\dots x_{i_{j}}^{e}\dots x_{i_{k}}^{a_{k}}\in 
\mathcal{B}_{n}$ and its Jones polynomial $V_{n}(e)=V(\widehat{\beta(e)})$;
in general $V_{n}(\beta)$ stands for $V(\widehat{\beta})$, where $\beta\in 
\mathcal{B}_{n}$. We will freely use Artin braid relations and Markov moves
in proofs and some computations.

We also change the variable $s=q^{-1/2}$ in order to obtain, for large $e$,
polynomials in $s$ (and not Laurent polynomials in $\sqrt{q}$). Our first
result is:

\begin{theorem}[Recurrence relation]
\label{thm1.1} For any $e\in \mathbb{Z}$ we have 
\begin{equation*}
V_{n}(e+2)=(s^{3}-s)V_{n}(e+1)+s^{4}V_{n}(e)\, .
\end{equation*}
\end{theorem}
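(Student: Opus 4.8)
The plan is to derive the recurrence from a single application of the Jones skein relation, exploiting the fact that the relation is \emph{local}: it involves only three diagrams that agree away from one crossing. Since only the $j$-th exponent varies, I would write $\beta(e)=A\,x_{i_j}^{\,e}\,B$, where $A=x_{i_1}^{a_1}\cdots x_{i_{j-1}}^{a_{j-1}}$ and $B=x_{i_{j+1}}^{a_{j+1}}\cdots x_{i_k}^{a_k}$ are the fixed (possibly empty) parts of the word, and I would focus on the last crossing of the block $x_{i_j}^{\,e+2}=x_{i_j}^{\,e+1}\,x_{i_j}$.

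The key step is to set up the skein triple at that crossing. Switching the sign of the final crossing turns $x_{i_j}^{\,e+1}x_{i_j}$ into $x_{i_j}^{\,e+1}x_{i_j}^{-1}=x_{i_j}^{\,e}$, while the oriented smoothing deletes it and leaves $x_{i_j}^{\,e+1}$; after closure these give $\widehat{\beta(e+2)}$, $\widehat{\beta(e)}$ and $\widehat{\beta(e+1)}$ respectively. The subtle point, and the place where I would be most careful, is the crossing-sign convention: with the orientation coming from the braid closure (all strands coherently oriented), the positive Artin generator $x_{i_j}$ is a \emph{negative} crossing in the sense of the skein relation. Hence $\widehat{\beta(e+2)}$ plays the role of $L_-$, $\widehat{\beta(e)}$ the role of $L_+$, and $\widehat{\beta(e+1)}$ the role of $L_0$. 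I would also note that, because the two strands at the crossing are coherently oriented, the $0$-smoothing is genuinely the identity braid on those strands (not the cap--cup resolution), which is precisely what makes the middle term $\widehat{\beta(e+1)}$.

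With the triple identified, the skein relation $q^{-1}V_{L_+}-qV_{L_-}=(q^{1/2}-q^{-1/2})V_{L_0}$ becomes $q^{-1}V_n(e)-qV_n(e+2)=(q^{1/2}-q^{-1/2})V_n(e+1)$. Solving for $V_n(e+2)$ gives $V_n(e+2)=q^{-2}V_n(e)+(q^{-3/2}-q^{-1/2})V_n(e+1)$, and substituting $s=q^{-1/2}$ (so that $q^{-1/2}=s$, $q^{-3/2}=s^3$, $q^{-2}=s^4$) yields precisely $V_n(e+2)=(s^3-s)V_n(e+1)+s^4V_n(e)$.

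Finally, I would check uniformity in $e$: the factorization $x_{i_j}^{\,e+2}=x_{i_j}^{\,e+1}x_{i_j}$ and the crossing switch to $x_{i_j}^{\,e}$ are valid identities of braid diagrams for every $e\in\mathbb{Z}$ (the block $x_{i_j}^{\,e+1}$ is just some diagram, whatever the sign of its crossings), so no case distinction on the sign or size of $e$ is needed. I expect the only genuine obstacle to be bookkeeping the crossing sign and orientation correctly, so that the exponents of $s$ come out positive rather than as their reciprocals; everything after that is a one-line algebraic rearrangement.
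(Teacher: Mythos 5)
Your proof is correct and takes essentially the same route as the paper: the same skein triple with $\widehat{\beta(e+2)}$ in the role of $L_{-}$, $\widehat{\beta(e+1)}$ as $L_{0}$, $\widehat{\beta(e)}$ as $L_{+}$ (the crossing-sign identification you worried about matches the paper's conventions exactly), followed by the same substitution $s=q^{-1/2}$. The only minor difference is at the end: the paper first assumes $e$ positive and then reduces negative $e$ via $V_{n}(\dots x_{i_{j}}^{e}\dots)=V_{n}(\dots x_{i_{j}}^{m+e}x_{i_{j}}^{-m}\dots)$ with $m$ large, whereas your locality argument (the block $x_{i_{j}}^{e+1}$ is just some diagram, whatever the signs of its crossings) handles all $e\in\mathbb{Z}$ uniformly, which is a slight streamlining.
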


This formula shows that in computations with Jones polynomial of braids the
exponents can be reduced to 0 and 1 and is nothing new here: this comes from
quadratic relations in Hecke algebras and V.F.R. Jones and A. Ocneanu traces 
\cite{Jon},\cite{Ocn}. See also \cite{Mor} for applications of these ideas
to computations.

Systematic and elementary algebraic consequences of quadratic reduction
gives us a general expansion formula and the generating function for Jones
polynomials. Let us introduce two basic polynomials $%
P_{0}^{[a]}(s)=s^{3a}+(-1)^{a}s^{a+2}$ and $%
P_{1}^{[a]}(s)=s^{3a-1}+(-1)^{a+1}s^{a-1}$ (if $a$ is not positive, these
are Laurent polynomials).

\begin{theorem}[Expansion formula]
\label{thm1.2} The following formula holds for braids in $\mathcal{B}_{n}$ $%
(a_{1},\dots,a_{k}\in \mathbb{Z}$ and $J_{*}=(j_{1},\dots,j_{k})):$ 
\begin{equation*}
V_{n}(x_{i_{1}}^{a_{1}}x_{i_{2}}^{a_{2}}\dots x_{i_{k}}^{a_{k}})(s) =\frac{1%
} {(s^{2}+1)^{k}}\sum\limits_{J_{*}\in\{0,1\}^{k}}P_{j_{1}}^{[a_{1}]}(s)%
\dots P_{j_{k}}^{[a_{k}]}(s)V_n(x_{i_{1}}^{j_{1}}\dots
x_{i_{k}}^{j_{k}})(s)\,.
\end{equation*}
\end{theorem}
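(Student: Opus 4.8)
The plan is to reduce the multivariable statement to a one-variable expansion and then iterate. First I would read the recurrence of Theorem~\ref{thm1.1} as a second-order linear recurrence in the single varying exponent $e$, with all other data frozen. Its characteristic equation is $t^{2}-(s^{3}-s)t-s^{4}=0$, whose left side factors cleanly as $(t-s^{3})(t+s)$; hence the two characteristic roots are $s^{3}$ and $-s$. Consequently every solution has the closed form $V_{n}(e)=A\,s^{3e}+B\,(-s)^{e}$ for coefficients $A,B$ (Laurent polynomials in $s$ depending on the fixed exponents), and so is determined by its two values $V_{n}(0)$ and $V_{n}(1)$.

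Next I would pin down the fundamental solutions. Writing $V_{n}(e)=\al(e)V_{n}(0)+\be(e)V_{n}(1)$, the coefficient functions $\al,\be$ solve the same recurrence with initial data $\al(0)=1,\ \al(1)=0$ and $\be(0)=0,\ \be(1)=1$. My candidates are $\al(e)=P_{0}^{[e]}(s)/(s^{2}+1)$ and $\be(e)=P_{1}^{[e]}(s)/(s^{2}+1)$. To confirm these I would check two things: that each $P_{j}^{[e]}$ lies in the solution space, since $P_{0}^{[e]}=s^{3e}+s^{2}(-s)^{e}$ and $P_{1}^{[e]}=s^{-1}s^{3e}-s^{-1}(-s)^{e}$ are both combinations of $s^{3e}$ and $(-s)^{e}$; and that the normalised initial values come out right, e.g. $P_{0}^{[0]}/(s^{2}+1)=(1+s^{2})/(s^{2}+1)=1$ and $P_{0}^{[1]}/(s^{2}+1)=(s^{3}-s^{3})/(s^{2}+1)=0$, with the analogous identities for $P_{1}$. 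This establishes the one-variable expansion $V_{n}(\dots x_{i_{j}}^{e}\dots)=\frac{1}{s^{2}+1}\big(P_{0}^{[e]}(s)V_{n}(\dots x_{i_{j}}^{0}\dots)+P_{1}^{[e]}(s)V_{n}(\dots x_{i_{j}}^{1}\dots)\big)$.

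Finally I would obtain the full formula by applying this one-variable expansion to each of the $k$ exponents in turn, by induction on the number of exponents not yet reduced to $\{0,1\}$. Expanding $a_{1}$ splits $V_{n}(x_{i_{1}}^{a_{1}}\cdots x_{i_{k}}^{a_{k}})$ into the $j_{1}=0$ and $j_{1}=1$ terms with weights $P_{j_{1}}^{[a_{1}]}/(s^{2}+1)$; expanding $a_{2}$ inside each of these splits them further, and so on, the products $P_{j_{1}}^{[a_{1}]}\cdots P_{j_{k}}^{[a_{k}]}$ and the factor $(s^{2}+1)^{-k}$ accumulating exactly as in the statement. The point that licenses the iteration is that Theorem~\ref{thm1.1} holds for any fixed surrounding word, so once an exponent has been set to $0$ or $1$ it may simply be absorbed into the fixed data when the next exponent is expanded.

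The only real obstacle is bookkeeping rather than conceptual: I must make sure the recurrence is genuinely applied one variable at a time with the others frozen, which is precisely its hypothesis, and that the base cases $a_{l}\in\{0,1\}$ are consistent, where the weights reduce to $P_{j_{l}}^{[a_{l}]}/(s^{2}+1)=\delta_{j_{l},a_{l}}$ so the expansion is the identity. Some care with negative exponents is also needed, but since Theorem~\ref{thm1.1} is stated for all $e\in\Z$ and the $P_{j}^{[a]}$ are the corresponding Laurent polynomials, the same argument covers $a_{l}<0$ unchanged.
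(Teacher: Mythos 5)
Your proof is correct, and it differs from the paper's in packaging rather than in substance: the paper proves Theorem~\ref{thm1.2} by specializing its Appendix result on multiple Fibonacci sequences (Theorem~\ref{thm6.1}, whose proof is delegated to~\cite{nizami:08}), taking $r_{1}=-s$, $r_{2}=s^{3}$, so that $D=s(s^{2}+1)$ and $S_{j}^{[a]}=s\,P_{j}^{[a]}$, after which the factors of $s$ cancel and the stated formula drops out. What you have done is reprove exactly that special case inline: your normalized fundamental solutions $P_{0}^{[e]}/(s^{2}+1)$ and $P_{1}^{[e]}/(s^{2}+1)$, characterized by initial data $(1,0)$ and $(0,1)$, are precisely the paper's $S_{0}^{[e]}/D$ and $S_{1}^{[e]}/D$, and your induction on the number of exponents not yet reduced to $\{0,1\}$ is the concrete form of the isomorphism $\mathcal{F}^{[p]}(\beta,\gamma)\cong\mathcal{F}(\beta,\gamma)^{\otimes p}$ on which Theorem~\ref{thm6.1} rests; your observation that the recurrence of Theorem~\ref{thm1.1} applies with any surrounding word frozen is indeed the point that licenses the iteration. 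Your route buys self-containedness (no appeal to the external reference), while the paper's route buys uniformity: the same appendix theorem, part b), immediately yields the generating-function formula of Theorem~\ref{thm1.3}, which your argument would have to redo separately. One small slip, immaterial to the argument: the coefficients $A,B$ in $V_{n}(e)=A\,s^{3e}+B\,(-s)^{e}$ are in general rational functions of $s$, not Laurent polynomials (for the $2$-braids, $A=-s/(1+s^{2})$, cf.\ equation~(\ref{eq3.4})); since you only use the closed form to see that a solution is determined by $V_{n}(0)$ and $V_{n}(1)$ --- which also follows directly by running the recurrence forwards and backwards, $s^{4}$ being invertible in the Laurent polynomial ring --- nothing in your proof is affected.
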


We define the \emph{generating function} for Jones polynomials corresponding
to the braids $\beta ^{A_{\ast }}=x_{i_{1}}^{a_{1}}\dots
x_{i_{k}}^{a_{k}}\in \mathcal{B}_{n}$ with a fixed sequence $I_{\ast
}=(i_{1},\dots ,i_{k})\in \{1,\dots ,n-1\}^{k}$, as a formal series in $%
t_{1},\dots ,t_{k}$ 
\begin{equation*}
\mathcal{V}_{n,I_{\ast }}(t_{1},\dots ,t_{k})=\sum_{\substack{ A_{\ast }\in 
\mathbb{Z}^{k}}}V_{n}(x_{i_{1}}^{a_{1}}\dots
x_{i_{k}}^{a_{k}})t_{1}^{a_{1}}\dots t_{k}^{a_{k}}.
\end{equation*}

In the next formula we use new polynomials in the formal variable $t$: $%
q(t)= $ $=(1-s^{3}t)(1+st)$, $Q_{0}(t)=1-(s^{3}-s)t$, and $Q_{1}(t)=t$.

\begin{theorem}
\label{thm1.3} The generating function of Jones polynomials of type $(n,I_*)$
is a rational function in $t_{1},\dots,t_{k}$ given by 
\begin{equation*}
\mathcal{V}_{n,I_{*}}(t_{1},\dots,t_{k}) =\frac{1}{q(t_{1})\dots q(t_{k})}%
\sum_{\substack{ J_{*}\in \{0,1\}^{k}}}Q_{j_{1}}(t_{1})\dots
Q_{j_{k}}(t_{k})V_n(x_{i_{1}}^{j_{1}}\dots x_{i_{k}}^{j_{k}})\,.
\end{equation*}
\end{theorem}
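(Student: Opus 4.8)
The plan is to derive Theorem~\ref{thm1.3} directly from the Expansion Formula (Theorem~\ref{thm1.2}) by summing the expansion over all exponent vectors $A_*\in\mathbb{Z}^k$ against the monomial $t_1^{a_1}\dots t_k^{a_k}$. The key observation is that the expansion formula already writes each $V_n(x_{i_1}^{a_1}\dots x_{i_k}^{a_k})$ as a sum over $J_*\in\{0,1\}^k$ of a product of one-variable coefficients $P_{j_h}^{[a_h]}(s)$ times the ``reduced'' value $V_n(x_{i_1}^{j_1}\dots x_{i_k}^{j_k})$, which does \emph{not} depend on $A_*$. Hence after substituting into the definition of $\mathcal{V}_{n,I_*}$ and interchanging the finite sum over $J_*$ with the (formal) infinite sum over $A_*$, the whole expression factors: the $J_*$-sum pulls out in front, the reduced Jones polynomial is constant in $A_*$, and what remains is a product over $h=1,\dots,k$ of independent one-variable sums $\sum_{a_h\in\mathbb{Z}}\frac{P_{j_h}^{[a_h]}(s)}{s^2+1}\,t_h^{a_h}$.

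\medskip

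So the heart of the matter reduces to a single-variable generating-function computation: I would set $g_j(t)=\sum_{a\in\mathbb{Z}}\frac{P_j^{[a]}(s)}{s^2+1}\,t^a$ for $j=0,1$ and show that $g_j(t)=Q_j(t)/q(t)$, where $q(t)=(1-s^3t)(1+st)$. Using the explicit forms $P_0^{[a]}(s)=s^{3a}+(-1)^a s^{a+2}$ and $P_1^{[a]}(s)=s^{3a-1}+(-1)^{a+1}s^{a-1}$, each $g_j(t)$ splits into two geometric series in the formal variable $t$, namely $\sum_a (s^3 t)^a$ and $\sum_a (-st)^a$ with monomial prefactors, summing formally to $\frac{1}{1-s^3t}$ and $\frac{1}{1+st}$ respectively. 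A short algebraic manipulation combining these two fractions over the common denominator $q(t)=(1-s^3t)(1+st)$ should collapse the numerator of $g_0$ to $1-(s^3-s)t=Q_0(t)$ and the numerator of $g_1$ to $t=Q_1(t)$. I expect the exponent bookkeeping (the $s^{3a}$ versus $s^{a\pm}$ shifts and the alternating signs) to be exactly what is needed to make these numerators come out clean, and I would verify this by checking a couple of low-order coefficients against the definitions of $P_0^{[a]}$ and $P_1^{[a]}$.

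\medskip

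Assembling the pieces, I substitute $g_{j_h}(t_h)=Q_{j_h}(t_h)/q(t_h)$ back into the factored form, collect the $k$ denominators into $q(t_1)\dots q(t_k)$, and recognize the remaining product $\prod_h Q_{j_h}(t_h)$ times the reduced Jones value, summed over $J_*\in\{0,1\}^k$, as precisely the right-hand side claimed in Theorem~\ref{thm1.3}. The main obstacle I anticipate is not the algebra but the \emph{justification of the interchange of summation}: the series $\mathcal{V}_{n,I_*}$ runs over all of $\mathbb{Z}^k$, so for negative exponents the $P_j^{[a]}$ are Laurent polynomials and the individual one-variable series are only formal (they do not converge as Laurent series in a single ring, since they contain both arbitrarily large positive and negative powers of $t$). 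I would therefore be careful to state in what sense $\mathcal{V}_{n,I_*}$ is defined --- most naturally as a formal object obtained by treating the two geometric tails separately, or by working in a suitable completed ring where $\frac{1}{1-s^3t}$ and $\frac{1}{1+st}$ are legitimate --- and to note that the rational expression on the right is the correct closed form under that convention. Once this formal framework is fixed, the interchange is just linearity over a finite index set $J_*$, and the rest follows mechanically from the single-variable identity.
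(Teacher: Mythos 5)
Your proposal is correct, and it amounts to a self-contained derivation of what the paper obtains by citation: the paper's entire proof of Theorem~\ref{thm1.3} is the one line ``take $r_{1}=-s$, $r_{2}=s^{3}$ in part (b) of Theorem~\ref{thm6.1}'', delegating all computation to the appendix result and its reference \cite{nizami:08}, whereas you rederive that statement in the relevant special case by summing the expansion formula (Theorem~\ref{thm1.2}) against monomials and evaluating two one-variable geometric series. Your key identity does check out: over the common denominator $q(t)=(1-s^{3}t)(1+st)$, the numerator of $g_{0}$ is $(1+st)+s^{2}(1-s^{3}t)=(1+s^{2})\bigl(1-(s^{3}-s)t\bigr)$ and the numerator of $g_{1}$ is $s^{-1}\bigl[(1+st)-(1-s^{3}t)\bigr]=(1+s^{2})\,t$, so the factor $s^{2}+1$ cancels and $g_{j}(t)=Q_{j}(t)/q(t)$, exactly what is needed; the interchange of the finite $J_{*}$-sum with the exponent sum is then harmless linearity. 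What your route buys is transparency --- the reader sees where the numerators $Q_{0},Q_{1}$ come from --- at the cost of redoing, in a special case, the computation that Theorem~\ref{thm6.1}(b) packages in general form. One further point in your favor: your concern about the $\mathbb{Z}^{k}$-indexed sum is not pedantry but a genuine imprecision in the paper itself. The definition of $\mathcal{V}_{n,I_{*}}$ sums over $A_{*}\in\mathbb{Z}^{k}$, yet Theorem~\ref{thm6.1} is stated for sequences indexed by non-negative integers; a bilateral series such as $\sum_{a\in\mathbb{Z}}x^{a}$ is annihilated by multiplication by $1-x$, so it cannot consistently be assigned the value $1/(1-x)$, and the rational-function identity is literally meaningful only as a formal power series statement with exponents in $\mathbb{N}^{k}$ (or on each orthant separately). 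Restricting to non-negative exponents, as you propose, is the correct reading, and it is implicitly what the paper's appeal to Theorem~\ref{thm6.1}(b) does.
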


In Section~\ref{sec2} we give proofs for Theorems~$1, 2$ and $3$. The
necessary algebraic background is given in Appendix (for details see \cite%
{nizami:08}).

In Section~\ref{sec3} we will use recurrence relation to evaluate the degree
of Jones polynomial. One result is the following:

\begin{proposition}
\label{prop1.4} If $e\gg 0$, then $V_{n}(e)$ is a polynomial in $s$ and

\begin{equation*}
\lim_{e \rightarrow \infty} \deg V_{n}(e)=+\infty.
\end{equation*}
\end{proposition}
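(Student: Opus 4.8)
The plan is to treat the identity of Theorem~\ref{thm1.1} as a second-order linear recurrence in the single variable $e$, with coefficients in the field $\Q(s)$, and to solve it explicitly. First I would form the characteristic equation $\lambda^{2}-(s^{3}-s)\lambda-s^{4}=0$. Its discriminant is
\[
(s^{3}-s)^{2}+4s^{4}=s^{6}+2s^{4}+s^{2}=s^{2}(s^{2}+1)^{2},
\]
a perfect square, so the two roots are the Laurent polynomials $\lambda_{1}=s^{3}$ and $\lambda_{2}=-s$. Since they are distinct as elements of $\Q(s)$, the general solution of the recurrence is
\[
V_{n}(e)=A(s)\,s^{3e}+B(s)\,(-s)^{e},
\]
valid for every $e\in\Z$ by Theorem~\ref{thm1.1}, where $A,B\in\Q(s)$ are fixed rational functions determined by the two initial values $V_{n}(0)$ and $V_{n}(1)$. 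Solving the resulting $2\times2$ linear system gives $A=\frac{V_{n}(1)+sV_{n}(0)}{s^{3}+s}$ and $B=\frac{s^{3}V_{n}(0)-V_{n}(1)}{s^{3}+s}$.

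Next I would extract the two assertions directly from this closed form. Each of $A(s)$ and $B(s)$ is independent of $e$, while multiplication by $s^{3e}$ (respectively $s^{e}$) shifts every exponent occurring in it up by $3e$ (respectively $e$). Hence for $e$ large the block of powers of $s$ appearing in $A(s)\,s^{3e}$ lies strictly above the block appearing in $B(s)\,(-s)^{e}$, so the two terms cannot cancel each other. Consequently the lowest power of $s$ in $V_{n}(e)$ equals its value in the lower block and grows linearly in $e$, so it is eventually nonnegative; this is exactly the statement that $V_{n}(e)$ is a genuine polynomial in $s$ once $e\gg0$. Likewise the highest power of $s$ equals $3e+\deg_{s}A$ when $A\neq0$ (and $e+\deg_{s}B$ when $A=0$), and in either case it tends to $+\infty$, giving $\lim_{e\to\infty}\deg V_{n}(e)=+\infty$.

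The point that needs care is the degenerate possibility that both coefficients $A$ and $B$ vanish, for then $V_{n}(e)\equiv0$ and its degree is undefined. I would rule this out by recalling that $V_{n}(e)=V(\widehat{\beta(e)})$ is the Jones polynomial of an honest oriented link and is therefore never the zero polynomial; hence at least one of $A,B$ is a nonzero element of $\Q(s)$. Once this is secured, the argument above applies verbatim: whichever characteristic term carries a nonzero coefficient dominates for large $e$, its top exponent grows at least linearly, and its bottom exponent eventually exceeds $0$. I expect the only real obstacle to be this bookkeeping with the $\min$- and $\max$-degrees, together with the verification that the separation of the two exponent blocks genuinely prevents accidental cancellation; the computation of the characteristic roots themselves is immediate once the discriminant is recognized as a perfect square.
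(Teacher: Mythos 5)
Your proposal is correct in substance, and it takes a genuinely different route from the paper. The paper never solves the recurrence to prove this proposition: its first proof runs through the trichotomy of Definition~\ref{def3.1}, using Propositions~\ref{prop3.2}--\ref{prop3.4} to show that once a stable or semistable pair occurs every later consecutive pair is stable (so the degree jumps by $3$ each step), while an all-critical tail forces the degrees to form an arithmetic progression; its second proof combines Proposition~\ref{prop3.5} (the order grows at least linearly in $e$) with the nonvanishing of the Jones polynomial, since $\deg V \geq \mathrm{ord}\,V$. Your Binet-type closed form $V_{n}(e)=A(s)s^{3e}+B(s)(-s)^{e}$ is instead the $k=1$ case of the paper's Theorem~\ref{thm6.1}\textbf{a)} (equivalently of Theorem~\ref{thm1.2}), applied here rather than in Section~\ref{sec2}; your roots and the formulas for $A$ and $B$ are right (compare \eqref{eq3.4}, which is exactly your formula for $n=2$). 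What your route buys is a sharper conclusion: not just divergence but the exact asymptotics $\deg V_{n}(e)=3e+\deg A$ and $\mathrm{ord}\,V_{n}(e)=e+\mathrm{ord}\,B$ for $e\gg 0$ (when $A,B\neq 0$); what the paper's first proof buys is a purely local argument that also yields the leading-coefficient statements of Propositions~\ref{prop3.2}--\ref{prop3.4}, which are used later in Section~\ref{sec4}. You and the paper's second proof both invoke $V\neq 0$, though for different purposes (you use it to rule out $A=B=0$).

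One step needs repair as written. The coefficients $A,B$ are rational functions, not Laurent polynomials---for $n=2$ one has $A=-s/(1+s^{2})$---so ``the block of powers of $s$ appearing in $A(s)s^{3e}$'' is not a finite set, and the literal block-separation claim does not parse: any Laurent-series expansion of $A$ has infinitely many terms, and the supports of the two expansions never separate. The fix is routine and preserves your argument. Either clear the common denominator $s^{3}+s$, writing $s(s^{2}+1)V_{n}(e)=a(s)s^{3e}+b(s)(-s)^{e}$ with $a=V_{n}(1)+sV_{n}(0)$ and $b=s^{3}V_{n}(0)-V_{n}(1)$ honest Laurent polynomials, after which the supports of the two terms really are disjoint once $2e>\deg b-\mathrm{ord}\,a$; or work with the degree and order valuations on $\Q(s)$, defined by $\deg(p/q)=\deg p-\deg q$ and $\mathrm{ord}(p/q)=\mathrm{ord}\,p-\mathrm{ord}\,q$, which satisfy $\deg(f+g)=\max(\deg f,\deg g)$ and $\mathrm{ord}(f+g)=\min(\mathrm{ord}\,f,\mathrm{ord}\,g)$ whenever the two values differ. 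With either patch your conclusions stand: $\mathrm{ord}\,V_{n}(e)$ grows linearly, so $V_{n}(e)$ (a priori a Laurent polynomial in $s$) is eventually a polynomial, and $\deg V_{n}(e)\rightarrow+\infty$.
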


There are still open problems relating Jones invariant with closed 3-braids,
see J. Birman paper \cite{Bir}. In Section~\ref{sec4} we compute Jones
polynomials of 2-braids and some families of $3$-braids and powers of
Garside braid $\Delta_{3}=x_{1}x_{2}x_{1} $ (see~\cite{Garside:69},~\cite%
{Bir:74}) and we establish the following results:

\begin{proposition}
\label{prop1.5} For any $k\geq0$, we have 
\begin{equation*}
V_{3}(\Delta_{3}^{2k})=2s^{12k}+s^{6k+2}+s^{6k-2},
\end{equation*}
\begin{equation*}
V_{3}(\Delta_{3}^{2k+1})=-s^{6k+5}-s^{6k+1}.
\end{equation*}
\end{proposition}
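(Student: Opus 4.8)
The plan is to argue by induction on $k$, treating the even and odd powers separately, since the two subsequences behave differently: the closures $\widehat{\Delta_3^{2k}}$ are $3$-component links while the $\widehat{\Delta_3^{2k+1}}$ are $2$-component links, and their Jones polynomials satisfy recurrences of different orders. Concretely, I would first establish the two closed recurrences
\begin{equation*}
V_{3}(\Delta_3^{2k+2})=(s^{6}+s^{12})V_{3}(\Delta_3^{2k})-s^{18}V_{3}(\Delta_3^{2k-2}),\qquad V_{3}(\Delta_3^{2k+3})=s^{6}V_{3}(\Delta_3^{2k+1}),
\end{equation*}
both obtained by multiplying the braid by one central full twist $\Delta_3^{2}=(x_1x_2)^{3}$, and then check that the proposed formulas satisfy them. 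I would compute the base cases $V_{3}(\Delta_3^{0})$, $V_{3}(\Delta_3)$, $V_{3}(\Delta_3^{2})$ directly: $V_{3}(\Delta_3^{0})=s^{-2}+2+s^{2}$ is the three-component unlink, and the other two are small closures evaluated by the skein relation or by a short application of Theorem~\ref{thm1.1} to the family $x_1x_2x_1^{e}$ (whose $e=0$ member has unknot closure). These three values seed the two recurrences.

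The heart of the argument, and the step I expect to be the main obstacle, is the derivation of these recurrences from the braid picture. Since all exponents in $\Delta_3^{m}=(x_1x_2)^{3k}\dots$ are equal to $1$, the expansion formula of Theorem~\ref{thm1.2} is vacuous, so one cannot simply read the answer off a $\{0,1\}$-reduction; instead I would append the six crossings of the full twist $(x_1x_2)^{3}$ and reduce them using the Artin relation $x_1x_2x_1=x_2x_1x_2$, Markov conjugation, and repeated application of Theorem~\ref{thm1.1} to the newly created syllables. The characteristic equation of Theorem~\ref{thm1.1} is $\lambda^{2}-(s^{3}-s)\lambda-s^{4}=0$, with roots $s^{3}$ and $-s$; the roots $s^{6}$ and $s^{12}$ governing the even recurrence are the scalars by which the central full twist acts on the two irreducible sectors of the Jones representation of $\mathcal{B}_{3}$ (see the Appendix). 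The genuinely delicate point is the odd case: one must show that the closures $\widehat{\Delta_3^{2k+1}}$ couple only to the $s^{6}$-sector, so that the $s^{12}$-eigenvalue drops out and the recurrence collapses to first order.

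Finally I would close the induction. The even formula rewrites as
\begin{equation*}
2s^{12k}+s^{6k+2}+s^{6k-2}=2\,(s^{12})^{k}+(s^{2}+s^{-2})\,(s^{6})^{k},
\end{equation*}
which is manifestly a combination of the geometric sequences with ratios $s^{12}$ and $s^{6}$, hence satisfies the even recurrence; the odd formula is $-(s^{5}+s)(s^{6})^{k}$, which visibly satisfies $V_3(\Delta_3^{2k+3})=s^{6}V_3(\Delta_3^{2k+1})$. Together with the base cases this yields the proposition by induction on $k$. The only computational risk is bookkeeping in the full-twist reduction of the second paragraph; once the two recurrences are in hand, the verification of the closed forms is immediate.
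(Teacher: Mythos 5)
Your overall strategy -- guess two decoupled recurrences (second order in $k$ for even powers, first order for odd powers), verify the closed forms against them, and seed with the base cases $V_3(\Delta_3^0)=s^2+2+s^{-2}$, $V_3(\Delta_3)=-s^5-s$, $V_3(\Delta_3^2)=2s^{12}+s^8+s^4$ -- is viable, and the recurrences you write down are in fact true (one can check them against the closed formulas). But the proof has a genuine gap exactly where you flag ``the main obstacle'': the two recurrences are never established, and they are the entire content of the argument, since checking that $2(s^{12})^k+(s^2+s^{-2})(s^6)^k$ and $-(s^5+s)(s^6)^k$ satisfy them is immediate. Your skein-theoretic sketch does not close this gap: appending the six crossings of $(x_1x_2)^3$ and applying Theorem~\ref{thm1.1} repeatedly produces intermediate braids that are \emph{not} powers of $\Delta_3$, so one is forced to introduce auxiliary braid families and track them through conjugations and Artin relations -- and when one does this carefully (as the paper does), the natural recurrences that come out \emph{mix} the powers of $\Delta_3$ with these intermediate closures rather than decoupling cleanly. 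Your alternative justification via the eigenvalues $s^6$, $s^{12}$ of the central full twist on the ``irreducible sectors of the Jones representation of $\mathcal{B}_3$'' appeals to machinery (Hecke algebra/Burau decomposition, as in Birman's paper \cite{Bir}) that is neither developed in your proposal nor present in the paper; in particular your citation ``(see the Appendix)'' is a misreference, since the Appendix concerns multiple Fibonacci sequences and says nothing about representations of $\mathcal{B}_3$. The genuinely delicate point you yourself identify -- that the odd closures couple only to the $s^6$-sector, killing the $s^{12}$-eigenvalue -- is asserted, not proved.

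For contrast, the paper sidesteps the need for decoupled recurrences entirely: it embeds the powers of $\Delta_3$ into the family $\alpha(n)=x_1x_2x_1x_2\cdots$ ($n$ factors), so that $\Delta_3^{2k}=\alpha(6k)$ and $\Delta_3^{2k+1}=\alpha(6k+3)$, derives four recurrence relations for $V_\Delta(n)=V_3(\alpha(n))$ (Proposition~\ref{prop4.17}) by applying Theorem~\ref{thm1.1} one, two, or three times to explicit conjugated word forms $x_1^{2k+\ast}x_2x_1^2x_2^2\cdots$ listed in a table, computes the six seeds $V_\Delta(0),\dots,V_\Delta(5)$ (Lemma~\ref{lem4.19}), and proves closed formulas for all six residues of $n$ modulo $6$ simultaneously by induction (Proposition~\ref{prop4.20}); Proposition~\ref{prop1.5} is then the cases $n\equiv 0$ and $n\equiv 3 \pmod 6$. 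Note that the paper's recurrence for $V_\Delta(6k)$ involves $V_\Delta(6k-1)$, $V_\Delta(6k-3)$, $V_\Delta(6k-5)$ and $V_\Delta(6k-6)$ -- precisely the kind of mixing your clean recurrences would have to eliminate. If you want to salvage your route, the most direct repair is to prove your two recurrences as corollaries of the paper's Proposition~\ref{prop4.17} and Proposition~\ref{prop4.20}; but of course that presupposes the result, so in substance you would end up reproducing the paper's induction over all residues mod $6$ rather than avoiding it.
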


\begin{theorem}
\label{thm1.6} \textbf{a\emph{)}} For exponents $a_{i}\geq 1$ $(i=1,\dots
,2L),$ Jones polynomial of the 3-braid $\beta ^{A_{\ast
}}=x_{1}^{a_{1}}x_{2}^{a_{2}}\dots x_{1}^{a_{2L-1}}x_{2}^{a_{2L}}$ of total
degree $D=\sum a_{i}$ satisfies: 
\begin{equation*}
\deg V_{3}(x_{1}^{a_{1}}x_{2}^{a_{2}}\dots
x_{1}^{a_{2L-1}}x_{2}^{a_{2L}})\leq 3D-2L.
\end{equation*}%
\textbf{b\emph{)}} For exponents $a_{i}\geq 2$ $(i=1,\ldots ,2L)$ and $k\leq
4$ the equality holds: 
\begin{equation*}
\deg V_{3}(x_{1}^{a_{1}}x_{2}^{a_{2}}\dots
x_{1}^{a_{2L-1}}x_{2}^{a_{2L}})=3D-2L
\end{equation*}%
and the leading coefficient of $V_{3}($\ $\beta ^{A_{\ast }})$ is $1$.
\end{theorem}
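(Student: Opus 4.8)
The plan is to prove Theorem~\ref{thm1.6} by an induction on $L$ (and, for the degree bound, an inner induction on the exponents $a_i$), using the Expansion formula (Theorem~\ref{thm1.2}) as the engine and reducing every computation to the known Jones polynomials of words $x_1^{j_1}x_2^{j_2}\dots x_1^{j_{2L-1}}x_2^{j_{2L}}$ with $j_h\in\{0,1\}$. First I would record the degree behaviour of the building blocks: since $P_0^{[a]}(s)=s^{3a}+(-1)^a s^{a+2}$ and $P_1^{[a]}(s)=s^{3a-1}+(-1)^{a+1}s^{a-1}$, each has top degree $3a$ and $3a-1$ respectively (for $a\geq 1$), while the denominator $(s^2+1)^k$ has degree $2k$. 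Plugging these into the expansion formula, the summand indexed by $J_*=(j_1,\dots,j_{2L})$ contributes a polynomial whose degree is at most $\sum_h (3a_h-j_h)+\deg V_3(x_{i_1}^{j_1}\dots x_{i_{2L}}^{j_{2L}})-2k$, where $k=2L$.

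For part (a) the strategy is to bound $\deg V_3$ of the reduced words uniformly. The key numeric claim I would isolate as a lemma is that for every $J_*\in\{0,1\}^{2L}$ the reduced Jones polynomial $V_3(x_1^{j_1}x_2^{j_2}\dots x_2^{j_{2L}})$ has degree small enough that, after subtracting the denominator degree $2\cdot 2L$ and adding $\sum_h 3a_h - \sum_h j_h \le 3D$, the total stays $\le 3D-2L$. The cleanest route is to show that the maximal contribution comes from the all-ones index $J_*=(1,\dots,1)$, giving $\sum_h(3a_h-1)=3D-2L$, and that every other index gives something of strictly smaller or equal degree because each coordinate with $j_h=0$ loses a factor of $s$ in degree from $P^{[a_h]}$ but this is compensated against the reduced term. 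I would therefore first compute, or bound from the $3$-braid structure (using Markov moves and the fact that reduced $3$-braid words of this alternating shape close up to well-understood links), the degrees of the finitely many reduced polynomials, and then assemble the estimate so that no summand exceeds $3D-2L$ after cancellation of the $(s^2+1)^{2L}$ denominator.

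For part (b) I would prove the sharp equality and the leading coefficient $1$ under the stronger hypothesis $a_i\ge 2$ and $k\le 4$ (here $k$ should be read as the relevant small parameter, presumably $L$, bounding the number of syllable pairs) by tracking the leading term precisely rather than just bounding it. The idea is that the top-degree term of each $P_{j_h}^{[a_h]}(s)$ is $s^{3a_h-j_h}$ with coefficient $+1$, and the denominator $(s^2+1)^{2L}$ has leading term $s^{4L}$ with coefficient $1$; so the candidate leading term of the whole expression is $s^{3D-2L}$ coming from $J_*=(1,\dots,1)$ with coefficient equal to the leading coefficient of $V_3(x_1 x_2\dots x_1 x_2)$, which for these small alternating words is a specific torus-link Jones polynomial whose top coefficient is $1$. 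The crucial point is to verify that no other index $J_*$ produces a term of the same degree $3D-2L$ that could cancel this leading coefficient; this is exactly where the hypotheses $a_i\ge 2$ (which separates the two monomials inside each $P^{[a_h]}$ so the subleading term sits strictly below) and $k\le 4$ (which limits the reduced polynomials to a finite, explicitly checkable list) are used.

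The main obstacle I anticipate is controlling the degrees of the reduced Jones polynomials $V_3(x_1^{j_1}\dots x_2^{j_{2L}})$ for general $L$ in part (a): these are Jones polynomials of alternating closed $3$-braids and one must ensure their degree does not grow faster than the $3D-2L$ budget allows for any choice of the $2L$ binary exponents. For part (b) the delicate point is not degree growth but \emph{cancellation}: showing the coefficient of $s^{3D-2L}$ does not vanish, which requires proving that the unique maximal-degree contribution is isolated. The restriction $k\le 4$ strongly suggests that the general sharp statement fails or becomes much harder beyond this range, so I would verify the finitely many cases by direct computation (via Theorem~\ref{thm1.2}) and confirm the leading coefficient is $1$ in each, rather than attempting a uniform argument.
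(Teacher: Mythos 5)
Your overall skeleton (expansion formula, reduction to binary words $J_*\in\{0,1\}^{2L}$, finite checking for part (b)) matches the paper, but both halves of your plan have genuine gaps where the actual mathematical content lies. For part (a), the lemma you isolate --- that $\deg V_3(x_1^{j_1}\cdots x_2^{j_{2L}})$ is small enough, uniformly in $L$ --- is essentially as hard as the theorem itself, and ``these close up to well-understood links'' is not an argument. Note that the budget is exactly saturated: for $J_*=(1,\dots,1)$ one needs $\deg V_3(x_1x_2\cdots x_1x_2)\le 4L$, and this is an equality, so nothing short of the exact formula for $V_\Delta(2L)$ will do; the paper gets it from the Garside-power recurrences (Propositions~\ref{prop4.17} and~\ref{prop4.20}). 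Moreover, a naive induction on $L$ does not close, because deleting the zeros of $J_*$ produces shorter words whose zero-count changes; the paper must strengthen the statement to Theorem~\ref{thm77} ($\deg V_3\le 3D-2L+2Z$ for braids with zero exponents allowed) and run a joint induction (Lemma~\ref{lem4.28}) together with a careful case analysis on $J=\sum j_i$. Your proposal contains neither the strengthened hypothesis nor the all-ones computation, which are the two load-bearing ingredients.

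For part (b), your proposed mechanism is factually wrong, not merely incomplete. You want the top term to come from the isolated block $J_*=(1,\dots,1)$ with coefficient $+1$ and all other blocks strictly below. In fact the all-ones reduced polynomial does not have leading coefficient $+1$ (e.g.\ $V_\Delta(4)=-s^8+s^6+s^2$, $V_\Delta(6)=2s^{12}+s^8+s^4$, $V_\Delta(8)=-s^{16}+s^{14}+s^{10}$), and several blocks attain the maximal degree $3D-2L$ simultaneously: the paper's Tables~2--4 show the leading coefficient $1$ arises only after signed cancellation, namely $2-1$ for $L=2$, $2+3-6+2$ for $L=3$, and $2+8-16+8-1$ for $L=4$, where the multiplicities count conjugates of each reduced word. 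Relatedly, you misidentify the role of $a_i\ge 2$: since $P_0^{[1]}=0$ and $\deg P_0^{[a]}=3a$ only for $a\ge 2$, the hypothesis is needed precisely so that the zero-containing blocks such as $(1,0,1,0,\dots)$ \emph{do} reach top degree and enter the cancellation; if some $a_i=1$ the answer changes (e.g.\ $(1,1,a_3,a_4)$ has leading term $-s^{3D-4}$, Table~1). Your fallback of direct computation for $L\le 4$ is indeed what the paper does, but carried out as a census of all top-degree blocks with their conjugacy multiplicities --- a computation that would contradict, rather than confirm, the isolation claim your argument rests on.
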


The degree, coefficients and breadth of Jones polynomial are well understood
for special classes of links (\cite{Mur},\cite{Thit}). See also \cite{Sto}
for a study of this subject. Research in this area (\cite{Big},\cite{Eli},%
\cite{Dar}) is motivated by a natural question: are there nontrivial
solutions for the equation $V(\widehat{\beta })=1$ ? In Section~\ref{sec5}
we find that:

\begin{theorem}
\label{thm1.7} The sequence $(V_n(e))_{e\in \mathbb{Z}}$ could contain at
most two polynomials $V_n(a)$ and $V_n(b)$ equal to $1$, and in this case $%
|a-b|=2$.
\end{theorem}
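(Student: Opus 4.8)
The plan is to exploit the recurrence relation from Theorem~\ref{thm1.1} together with the generating-function viewpoint of Theorem~\ref{thm1.3}. Fixing a braid word with one variable exponent $e$, the sequence $(V_n(e))_{e\in\mathbb{Z}}$ satisfies the linear recurrence
\begin{equation*}
V_{n}(e+2)=(s^{3}-s)V_{n}(e+1)+s^{4}V_{n}(e),
\end{equation*}
whose characteristic polynomial is $q(t)$-related, factoring as $\lambda^2-(s^3-s)\lambda-s^4=(\lambda-s^3)(\lambda+s)$. The two roots are $\lambda_1=s^3$ and $\lambda_2=-s$, which are distinct as elements of the Laurent polynomial ring. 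Hence there is a closed-form solution $V_n(e)=A\,s^{3e}+B\,(-s)^{e}=A\,s^{3e}+B\,(-1)^{e}s^{e}$ for some coefficients $A,B$ (Laurent polynomials in $s$, possibly containing $\sqrt{s}$-type terms, determined by two consecutive values $V_n(0)$ and $V_n(1)$). This is really just the $k=1$ case of the expansion formula in Theorem~\ref{thm1.2}, since $P_{0}^{[a]}$ and $P_{1}^{[a]}$ package exactly the $s^{3a}$ and $(-1)^a s^a$ behaviour.

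First I would write $V_n(e)=A\,s^{3e}+B\,(-1)^{e}s^{e}$ and reduce the equation $V_n(e)=1$ to a constraint on $e$. The key structural point is a parity/exponent separation: the term $A\,s^{3e}$ and the term $B\,(-1)^e s^e$ have, for most $e$, non-overlapping ranges of exponents in $s$, so that $V_n(e)=1$ forces strong cancellation that can occur for at most finitely many $e$. I would make this precise by tracking degrees: as $e\to+\infty$ the $s^{3e}$ term dominates and as $e\to-\infty$ the behaviour is controlled similarly, so (consistent with Proposition~\ref{prop1.4}) the constant $1$ can only be hit in a bounded window of exponents.

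The heart of the argument is the claim that the equation $A\,s^{3e}+B\,(-1)^{e}s^{e}=1$ has at most two integer solutions, and that any two solutions $a,b$ satisfy $|a-b|=2$. I would handle this by treating the two roots $s^3$ and $-s$ as driving two \emph{geometric} progressions and observing that a value of $1$ at $e=a$ imposes a linear relation between $A$ and $B$; a second hit at $e=b$ imposes another; and a hypothetical third hit would overdetermine the pair $(A,B)$ unless $A=0$ or $B=0$, each of which I would rule out or analyze separately. The even/odd structure of $(-1)^e$ is what forces the spacing $|a-b|=2$: the simplest way to get two solutions is to have them share the same parity so that the oscillating factor agrees, and a short case analysis on the exponents of $s$ pins the gap to exactly $2$ rather than any larger even number.

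The main obstacle I anticipate is the bookkeeping needed to prove that no \emph{three} values can equal $1$ and that solutions cannot be spaced farther apart than $2$. This requires a careful comparison of the $s$-exponents contributed by $A\,s^{3e}$ versus $B\,(-1)^e s^e$ across candidate values of $e$, ruling out accidental cancellations that could in principle produce additional solutions; the degenerate cases $A=0$ or $B=0$ must be isolated and shown either impossible for a genuine closed braid or to yield at most the asserted configuration. Once the two-root closed form is in hand, the rest is a finite but delicate exponent-matching argument rather than any deep topology.
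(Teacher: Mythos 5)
Your starting point (the closed form $V_n(e)=A\,s^{3e}+B(-1)^e s^e$ coming from the roots $s^3$ and $-s$) is the same as the paper's, but the engine you propose to drive the conclusion --- ``a third hit overdetermines $(A,B)$'' plus ``parity forces the gap to be $2$'' --- does not work, because it never uses the one fact that actually constrains the problem: every term of the sequence is a Jones polynomial of a link, hence a \emph{Laurent polynomial} in $s$, not an arbitrary rational function. Pure exponent-matching on $A\,s^{3e}+B(-1)^e s^e$ cannot bound the gap: for \emph{every} spacing $d\geq 1$ the $2\times 2$ linear system $V(0)=V(d)=1$ has a solution in rational functions $A,B$ (its determinant $(-1)^d s^{d}-s^{3d}\cdot(\pm s^{-2d})$-type expression is nonzero), so two hits at any distance are algebraically consistent. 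Parity is a red herring here: $d=4$ has the same parity pattern as $d=2$, yet it is excluded --- and only excluded --- because solving for the intermediate term gives a non-Laurent function. Concretely, the paper (Lemma~\ref{lem5.3}) sets $Q_m=V(a+m)$, uses $Q_0=1$ to write $Q_m=\frac{1}{s^3+s}\bigl[(Q_1+s)s^{3m}+(s^3-Q_1)(-s)^m\bigr]$, and then observes that $Q_n=1$ forces, e.g.\ for $n$ even, $Q_1=-s-\frac{s^2+1}{s^{n-1}(s^n+1)}$, which is a Laurent polynomial only for $n=2$; the odd case similarly allows only $n=1$. That integrality step is the missing idea in your sketch, and without it your ``finite but delicate exponent-matching argument'' has nothing to match against. (Your three-hit argument is also incomplete as stated --- overdetermined systems can be consistent --- though that part is repairable by comparing leading exponents.)

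The second, independent gap: even after the Laurent-polynomiality analysis is done correctly, the algebra only yields $|a-b|\in\{1,2\}$, and the configuration $V(a)=V(a+1)=1$ is perfectly consistent with both the recurrence and Laurent-ness (taking $Q_0=Q_1=1$ generates a sequence of honest Laurent polynomials). No manipulation of the two-root closed form can rule it out. The paper eliminates the gap-$1$ case by a topological argument (Lemma~\ref{lem5.5}): the closures of $\alpha x_i^e\gamma$ and $\alpha x_i^{e+1}\gamma$ cannot both be knots, since their underlying permutations differ by a transposition and so cannot both be $n$-cycles; on the other hand $V_L$ evaluated at $s=1$ equals $(-2)^{c-1}$ where $c$ is the number of components, so $V(e)\equiv 1$ forces the closure to be a knot. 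Your proposal has no counterpart of this step, so as written it cannot reach the stated conclusion $|a-b|=2$; at best it could prove the weaker statement with $|a-b|\in\{1,2\}$.
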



\section{Proofs of the main results}

\label{sec2} In this section we fix $n\geq2$, generators $x_{i_{1}},\dots,
x_{i_{k}}\in \mathcal{B}_{n}$, the index $j$, and the exponents $%
(a_{1},\dots,a_{j-1},a_{j+1},\dots,a_{k})$. First we translate the skein
relation for Jones polynomials into a recurrence relation of Fibonacci type: 
\begin{equation*}
V_{n}(e+2)=(s^{3}-s)V_{n}(e+1)+s^{4}V_{n}(e).
\end{equation*}

\begin{proof}[Proof of Theorem~\protect\ref{thm1.1}]
Let $\gamma =\alpha x_{i_{j}}^{e}\beta $, where $\alpha
=x_{i_{1}}^{a_{1}}\dots x_{i_{j-1}}^{a_{j-1}}$ and $\beta
=x_{i_{j+1}}^{a_{j+1}}\dots x_{i_{k}}^{a_{k}}$ be fixed. Let us take $e$
positive. Since the geometrical change appears only at the $j$-th position
we draw a local picture (for the $j$-th factor only):\newline

\begin{picture}(360,160)
 \qbezier(80,140)(92,130)(82,122)
 \qbezier(78,138)(68,130)(80,120)

 \put(80,140){\line(-1,1){10}}
 \put(82,142){\line(1,1){11}}
 \put(80,120){\line(1,-1){11}}
  \put(78,118){\line(-1,-1){10}}
\put(90,138){{\tiny 1}}
 \put(90,118){{\tiny 2}}
\qbezier(160,140)(172,130)(162,122)
 \qbezier(158,138)(148,130)(160,120)
 \put(160,140){\line(-1,1){11}}
 \put(160,120){\line(1,-1){11}}
 \put(162,142){\line(1,1){10}}
  \put(158,118){\line(-1,-1){10}}
 \put(170,138){{\tiny 1}}
 \put(170,118){{\tiny 2}}
\qbezier(240,140)(252,130)(242,122)
 \qbezier(238,138)(228,130)(240,120)
 \put(240,140){\line(-1,1){11}}

 \put(240,120){\line(1,-1){11}}
 \put(242,142){\line(1,1){10}}
  \put(238,118){\line(-1,-1){10}}
 \put(250,138){{\tiny 1}}
 \put(250,118){{\tiny 2}}
\qbezier(80,80)(92,70)(82,62)
 \qbezier(78,78)(68,70)(80,60)
 \qbezier(80,60)(92,50)(82,42)
 \qbezier(78,58)(68,50)(80,40)
 \put(80,80){\line(-1,1){11}}
 \put(82,82){\line(1,1){10}}
\put(80,40){\vector(1,-1){11}}
 \put(78,38){\vector(-1,-1){11}}
\put(90,78){{\tiny $e$}}
 \put(90,58){{\tiny $e+1$}}
  \put(90,38){{\tiny $e+2$}}
\put(160,80){\line(-1,1){11}}
 \put(162,82){\line(1,1){10}}
 \qbezier(160,80)(172,70)(162,62)
 \qbezier(158,78)(148,70)(160,60)

 \qbezier(160,60)(172,50)(168,48)
 \qbezier(168,48)(160,40)(172,32)
 \put(172,32){\vector(1,-1){3}}

 \qbezier(158,58)(148,50)(152,48)
 \qbezier(152,48)(160,40)(152,32)
 \put(152,32){\vector(-1,-1){3}}
\put(170,78){{\tiny $e$}}
 \put(170,58){{\tiny$e+1$}}
\put(240,80){\line(-1,1){11}} \put(242,82){\line(1,1){10}}
\qbezier(240,80)(252,70)(242,62)
  \qbezier(238,78)(228,70)(240,60)

    \qbezier(240,60)(252,50)(240,40)
     \qbezier(238,58)(228,50)(238,42)
     \put(240,40){\vector(-1,-1){11}}
     \put(242,38){\vector(1,-1){10}}
     \put(250,78){{\tiny $e$}}
     \put(78,10){$L_{-}$}\put(158,10){$L_{0}$}\put(238,10){$L_{+}$}
     \put(80,97){\vdots}  \put(160,97){\vdots}  \put(240,97){\vdots}
\end{picture}

Now it is clear from the figure that the relation $%
q^{-1}V_{L_{+}}-qV_{L_{-}}=(q^{1/2}-q^{-1/2})V_{L_{0}}$ becomes, after the
changes $q\rightarrow s^{-2}$ and $L_{-}\rightarrow \widehat{\beta }%
(e+2),\,\,L_{0}\rightarrow \widehat{\beta }(e+1)$ and $L_{+}\rightarrow 
\widehat{\beta }(e)$, simply $V_{n}(e+2)=(s^{3}-s)V_{n}(e+1)+s^{4}V_{n}(e)$.
The negative case ($e<0$) can be reduced to the positive case using the
transformation $V_{n}(x_{i_{1}}^{a_{1}}\dots x_{i_{j}}^{e}\dots
x_{i_{k}}^{a_{k}})=V_{n}(x_{i_{1}}^{a_{1}}\dots
x_{i_{j}}^{m+e}x_{i_{j}}^{-m}\dots x_{i_{k}}^{a_{k}})$ with $m$ big enough.
\end{proof}

\begin{remark}
\label{rem2.1} We can find a similar recurrence relation for
Alexander-Conway polynomial and, in general, for HOMFLY polynomial. It is
interesting to remark that in the classical cases, Jones and Alexander, the
roots of the characteristic equation are rational functions: $r_{1}=-s$, $%
r_{2}=s^{3}$ for Jones polynomial and $r_{1}=s$, $r_{2}=-s^{-1}$ for
Alexander polynomial. Jones recurrence is nicer because the roots are
polynomials. For Alexander-Conway polynomial and more results on the roots
of the characteristic equation for HOMFLY polynomial, see~\cite%
{Barbu-Rehana:09}.
\end{remark}

In Appendix (or see~\cite{nizami:08} for full details), multiple Fibonacci
sequences are introduced. Our main example is the multiple Fibonacci
sequence given by Jones polynomials of closures of braids $V_{n,I_{\ast
}}(x_{i_{1}}^{a_{1}}x_{i_{2}}^{a_{2}}\dots x_{i_{k}}^{a_{k}})$, where $%
a_{1},\dots ,a_{k}\in \mathbb{Z}$; we fix $n$ (all braids are in $\mathcal{B 
}_{n}$), $k$, and also $I_{\ast }=(i_{1},\dots ,i_{k})$ with indices $%
i_{h}\in \{1,\dots ,n-1\}$. Applying Theorem~\ref{thm6.1} we obtain:

\begin{proof}[Proof of Theorem~\protect\ref{thm1.2}]
From the basic recurrence relation, we have $r^{2}=(s^{3}-s)r+s^{4},$ with
the roots $r_{1}=-s$ and $r_{2}=s^{3}$. Hence $D=s^{3}+s=s(s^{2}+1)$, $%
S_{0}^{[n]}=s^{3n+1}+(-1)^{n}s^{n+3}=s[s^{3n}+(-1)^{n}s^{n+2}]$, $%
S_{1}^{[n]}=s^{3n}+(-1)^{n+1}s^{n}=s[s^{3n-1}+(-1)^{n+1}s^{n-1}].$ Now, let
us introduce two new sequences of polynomials (if $n\geq1$), $%
P_{0}^{[n]}(s)=s^{3n}+(-1)^{n}s^{n+2}$ and $%
P_{1}^{[n]}(s)=s^{3n-1}+(-1)^{n+1}s^{n-1}$. Using Theorem~\ref{thm6.1} we
get 
\begin{equation*}
V_{n}(x_{i_{1}}^{a_{1}}x_{i_{2}}^{a_{2}}\dots x_{i_{k}}^{a_{k}}) =\frac{1} {%
s^{k}(s^{2}+1)^{k}}\sum\limits_{J_{*}\in\{0,1\}^{k}}(sP_{j_{1}}^{[a_{1}]})
(sP_{j_{2}}^{[a_{2}]})\dots (sP_{j_{k}}^{[a_{k}]})V_n(x_{i_{1}}^{j_{1}}\dots
x_{i_{k}}^{j_{k}})
\end{equation*}
\begin{equation*}
\,\,\,\,\,\,\,\,\,\,=\frac{1}{(s^{2}+1)^{k}}\sum\limits_{J_{*}\in\{0,1
\}^{k}}P_{j_{1}}^{[a_{1}]}\dots
P_{j_{k}}^{[a_{k}]}V_n(x_{i_{1}}^{j_{1}}\dots x_{i_{k}}^{j_{k}})\,.
\end{equation*}
\end{proof}

\begin{proof}[Proof of Theorem~\protect\ref{thm1.3}]
In the second part of Theorem~\ref{thm6.1} take $r_{1}=-s$ and $r_{2}=s^{3}$.
\end{proof}

\begin{remark}
\label{rem2.3} \textbf{\emph{a)}} The expansion formula has $2^{k}$ terms,
where $k$ is the number of factors of the braid $\beta
=x_{i_{1}}^{a_{1}}\dots x_{i_{k}}^{a_{k}}$, less than $2^{|a_{1}|+\ldots
+|a_{k}|}$, the number of terms in Kauffman expansion (see ~\cite%
{Kauffman:87}, ~\cite{Lickorish:97}).

\textbf{\emph{b)}} This formula reduces the computation of Jones polynomial
of $\beta $ to the computation of Jones polynomial of (simpler) positive
braids $\beta _{J}=x_{i_{1}}^{j_{1}}\dots x_{i_{k}}^{j_{k}}$, with $J_{\ast
}\in \{0,1\}^{k}$. If these braids contain exponents $\geq 2$ (after
possible concatenations the number of factors is less than $k$), another
application of expansion formula will reduce degrees and the number of
factors. Therefore, iterated application of the expansion formula reduces
computation to Jones polynomials of Markov square-free braids (see \cite%
{Barbu-Rehana:10} for definition and proofs) $\beta
_{I}=x_{i_{1}}x_{i_{2}}\dots x_{i_{k}}$, where $1\leq i_{1}<i_{2}<\dots
<i_{k}\leq n-1$, and in this case $V_{n}(\beta _{I})=(-s-s^{-1})^{n-k-1}$.
\end{remark}

\section{Degree of Jones polynomial}

\label{sec3} First we study the behavior of the function $e \mapsto \deg
V_n( \widehat{\beta (e)})=\deg V(e)$ for an $n$-braid $%
\beta(e)=x_{i_{1}}^{a_{1}} \dots x_{i_{j}}^{e}\dots x_{i_{k}}^{a_{k}}$. More
precise results are given for some families of 3-braids in Section~\ref{sec4}%
. If the Laurent polynomial $f=a_qs^q+a_{q+1}s^{q+1}+\dots +a_ps^p $ has
coefficients $a_q,a_p \neq 0 $, we denote the degree $\mathrm{deg}$$(f)=p $,
the order $\mathrm{ord}$$(f)=q $, and the leading coefficient $\mathrm{coeff}
$$(f)=a_p$; we also use the convention $\mathrm{deg}(0)\leq 0$.

\begin{definition}
\label{def3.1} $a$)\,\, $\beta(e)$, $\beta(e+1)$ is a \emph{\textbf{stable
pair}} if 
\begin{equation*}
\deg V(e+1)>1+\deg V(e).
\end{equation*}
$b$)\,\, $\beta(e)$, $\beta(e+1)$ is a \emph{\textbf{semistable pair}} if 
\begin{equation*}
\deg V(e+1)\leq \deg V(e).
\end{equation*}

$c$)\,\, $\beta(e)$, $\beta(e+1)$ is a \emph{\textbf{critical pair}} if 
\begin{equation*}
\deg V(e+1)=1+\deg V(e).
\end{equation*}
\end{definition}

\begin{proposition}[stable case]
\label{prop3.2} If $\beta(e)$, $\beta(e+1)$ is a stable pair then for all $%
m\in\mathbb{N}$ the pair $\beta(e+m)$, $\beta(e+m+1)$ is also stable, and
for $m\geq1$\,\,\,\, 
\begin{equation*}
\deg V(e+m)=\deg V(e+1)+3(m-1)
\end{equation*}
and $\beta(e+1)$ and $\beta(e+m)$ have the same leading coefficient.
\end{proposition}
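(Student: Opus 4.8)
The plan is to use the recurrence relation from Theorem~\ref{thm1.1} as the engine, treating it as a linear relation on the sequence of degrees. Writing $d_m = \deg V(e+m)$ and $c_m = \mathrm{coeff}(V(e+m))$, the recurrence $V(e+m+1) = (s^3-s)V(e+m) + s^4 V(e+m-1)$ expresses the top of $V(e+m+1)$ in terms of the tops of the two previous polynomials. The factor $s^3 - s$ has degree $3$, while $s^4$ has degree $4$; so the contribution of $(s^3-s)V(e+m)$ to the top has degree $3 + d_m$ and that of $s^4 V(e+m-1)$ has degree $4 + d_{m-1}$. The stability hypothesis $d_{e+1} > 1 + d_e$, rewritten for the base index as $d_1 > 1 + d_0$, says precisely that $3 + d_1 > 4 + d_0$, i.e. the first term strictly dominates the second in degree. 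This is the crux: when the first term strictly dominates, no cancellation of leading coefficients is possible between the two contributions, so the degree of the sum is exactly $3 + d_m$ and the leading coefficient is carried over unchanged (up to the known leading coefficient $+1$ of $s^3-s$, which is $1$).

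First I would set up an induction on $m$ with the inductive hypothesis being the pair of statements: (i) $\beta(e+m)$, $\beta(e+m+1)$ is a stable pair, i.e. $3 + d_{m+1} > 4 + d_m$ equivalently $d_{m+1} > 1 + d_m$; and (ii) $\deg V(e+m+1) = \deg V(e+1) + 3m$ with leading coefficient equal to that of $V(e+1)$. The base case $m=0$ is the hypothesis itself. For the inductive step, I would apply the recurrence at index $e+m+1$: since $d_{m+1} > 1 + d_m$ gives $3 + d_{m+1} > 4 + d_m$, the term $(s^3-s)V(e+m+1)$ has strictly larger degree than $s^4 V(e+m)$, hence
\begin{equation*}
\deg V(e+m+2) = 3 + \deg V(e+m+1), \qquad \mathrm{coeff}(V(e+m+2)) = \mathrm{coeff}(V(e+m+1)).
\end{equation*}
The degree formula $d_{m+2} = d_1 + 3(m+1)$ then follows immediately from the inductive formula for $d_{m+1}$, and the leading coefficient is preserved. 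To close the induction I must also re-verify stability of the next pair: from $d_{m+2} = 3 + d_{m+1}$ I get $d_{m+2} - d_{m+1} = 3 > 1$, so the pair $\beta(e+m+1)$, $\beta(e+m+2)$ is again stable. This self-propagation of stability is what makes the whole chain work.

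The main obstacle I anticipate is purely bookkeeping rather than conceptual: I must be careful that the degree counting is valid even when some $V(e+m)$ could in principle be zero or have unexpectedly low degree, and I should confirm that once stability holds the polynomials are genuinely nonzero so that \textrm{coeff} and \textrm{deg} behave as expected. The convention $\deg(0) \leq 0$ noted before Definition~\ref{def3.1} suggests I should check that the leading term cannot vanish; but the strict domination $3 + d_{m+1} > 4 + d_m$ guarantees no cancellation, so $V(e+m+2)$ inherits a nonzero leading coefficient from $V(e+m+1)$, and since the base pair is stable the first polynomial $V(e+1)$ is nonzero, propagating nonvanishing along the chain. Once this is pinned down, the formula $\deg V(e+m) = \deg V(e+1) + 3(m-1)$ for $m \geq 1$ and the equality of leading coefficients of $\beta(e+1)$ and $\beta(e+m)$ are immediate consequences of the induction, completing the proof.
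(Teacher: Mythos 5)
Your proposal is correct and follows essentially the same route as the paper: induction on $m$, using the recurrence $V(e+m+1)=(s^{3}-s)V(e+m)+s^{4}V(e+m-1)$ together with the strict degree domination $3+\deg V(e+m)>4+\deg V(e+m-1)$ supplied by stability, which forces $\deg V(e+m+1)=\deg V(e+m)+3$, preserves the leading coefficient, and propagates stability to the next pair. Your extra remark about nonvanishing of leading terms under the convention $\deg(0)\leq 0$ is a harmless refinement of the same argument.
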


\begin{proof}
We prove the statement by induction on $m$. Hypothesis gives the case $m=0$.
If $\beta (e+m-1)$, $\beta (e+m)$ is a stable pair, then Theorem~\ref{thm1.1}
implies 
\begin{equation}
V(e+m+1)=(s^{3}-s)V(e+m)+s^{4}V(e+m-1)  \label{eq2.1}
\end{equation}%
and $\deg V(e+m)>1+\deg V(e+m-1)$ implies $\deg V(e+m+1)=\deg V(e+m)+3=\deg
V(e+1)+3m$ (hence $\beta (e+m)$, $\beta (e+m+1)$ is stable) and
coeff\thinspace $V(e+m+1)$\thinspace \thinspace =\thinspace \thinspace
coeff\thinspace $V(e+m)$.
\end{proof}

\begin{proposition}[semistable case]
\label{prop3.3} If $\beta(e)$, $\beta(e+1)$ is a semistable pair then for
all $m\geq 1$ the pair $\beta(e+m)$, $\beta(e+m+1)$ is stable, and for $%
m\geq2 $\, , 
\begin{equation*}
\deg V(e+m)=\deg V(e)+3m-2
\end{equation*}
and also $\beta(e)$ and $\beta(e+m)$ have the same leading coefficient.
\end{proposition}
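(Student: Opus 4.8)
The plan is to argue in two stages: first show that the pair immediately following a semistable pair becomes stable, then invoke the already-proved stable case (Proposition~\ref{prop3.2}) to propagate the degree formula and the leading coefficient. The recurrence relation from Theorem~\ref{thm1.1}, written as $V(e+2)=(s^{3}-s)V(e+1)+s^{4}V(e)$, is the only engine I expect to need.

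First I would establish the base step, the transition from $m=0$ to $m=1$. By the semistable hypothesis we have $\deg V(e+1)\le \deg V(e)$, so in the relation $V(e+2)=(s^{3}-s)V(e+1)+s^{4}V(e)$ the two terms on the right have degrees $3+\deg V(e+1)$ and $4+\deg V(e)$. I want to see which dominates: since $\deg V(e+1)\le \deg V(e)$, we get $3+\deg V(e+1)\le 3+\deg V(e)<4+\deg V(e)$, so the $s^{4}V(e)$ term strictly wins and there is no cancellation of the top term. Hence $\deg V(e+2)=4+\deg V(e)$, and $\mathrm{coeff}\,V(e+2)=\mathrm{coeff}\,V(e)$. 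I should then check that the pair $\beta(e+1),\beta(e+2)$ is \emph{stable} in the sense of Definition~\ref{def3.1}(a), i.e. that $\deg V(e+2)>1+\deg V(e+1)$: indeed $\deg V(e+2)=4+\deg V(e)\ge 4+\deg V(e+1)>1+\deg V(e+1)$, using the semistable inequality once more. This is the one genuinely delicate point, and it is where I expect the main obstacle to lie, since it is exactly the comparison of degrees that can fail if a leading-term cancellation occurred; the argument above shows that the strict inequality $3+\deg V(e+1)<4+\deg V(e)$ rules such cancellation out.

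Once the pair $\beta(e+1),\beta(e+2)$ is known to be stable, the rest is automatic. By Proposition~\ref{prop3.2} applied with starting index $e+1$, every subsequent pair $\beta(e+m),\beta(e+m+1)$ for $m\ge 1$ is stable, which is the first assertion. For the degree formula, Proposition~\ref{prop3.2} (in the form $\deg V((e+1)+m')=\deg V(e+2)+3(m'-1)$ for $m'\ge 1$) together with $\deg V(e+2)=\deg V(e)+4$ gives, after reindexing $m=m'+1$ so that $m\ge 2$,
\begin{equation*}
\deg V(e+m)=\deg V(e+2)+3(m-2)=\bigl(\deg V(e)+4\bigr)+3(m-2)=\deg V(e)+3m-2,
\end{equation*}
which is exactly the claimed formula. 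Finally, Proposition~\ref{prop3.2} states that $\beta(e+2)$ and $\beta(e+m)$ share the same leading coefficient, and the base step already gave $\mathrm{coeff}\,V(e+2)=\mathrm{coeff}\,V(e)$; chaining these equalities yields $\mathrm{coeff}\,V(e+m)=\mathrm{coeff}\,V(e)$, so $\beta(e)$ and $\beta(e+m)$ have the same leading coefficient. This completes the reduction of the semistable case to the stable case.
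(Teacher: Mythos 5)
Your proof is correct and follows essentially the same route as the paper: use the semistable inequality to show that in the recurrence $V(e+2)=(s^{3}-s)V(e+1)+s^{4}V(e)$ the term $s^{4}V(e)$ strictly dominates (so $\deg V(e+2)=\deg V(e)+4$ with no cancellation, and the leading coefficients of $V(e+2)$ and $V(e)$ agree), conclude that $\beta(e+1)$, $\beta(e+2)$ is stable, and then invoke Proposition~\ref{prop3.2}. The paper's proof is just a terser version of exactly this argument; your write-up merely makes the degree comparison and the reindexing explicit.
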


\begin{proof}
The pair $\beta(e)$, $\beta(e+1)$ is semistable and the recurrence 
\begin{equation*}
V(e+2)=(s^{3}-s)V(e+1)+s^{4}V(e)
\end{equation*}
implies $\deg V(e+2)=\deg V(e)+4>\deg V(e+1)+1$, hence the pair $\beta(e+1)$%
, $\beta(e+2)$ is stable and we can apply Proposition~\ref{prop3.2}.
\end{proof}

In the last (critical) case we cannot obtain complete results; see the Case 2%
$c$) below.

\begin{proposition}[critical case]
\label{prop3.4} Let $\beta(e)$, $\beta(e+1)$ be a critical pair and $C$ the
sum of leading coefficients of $\beta(e)$ and $\beta(e+1)$.\newline

\indent\textbf{Case\,\emph{1)}} If $C\neq0$, then for all $m\geq1$ the pair $%
\beta(e+m)$, $\beta(e+m+1)$ is stable, and 
\begin{equation*}
\deg V(e+m)=\deg V(e+1)+3(m-1)\, ,
\end{equation*}
and also the leading coefficient of $V(e+m)$ is $C$.\newline
\indent\textbf{Case\,\emph{2)}} If $C=0$ and\newline
\indent \textbf{2a\emph{)}} $\deg V(e+2)=2+\deg V(e+1)$, then for all $%
m\geq1 $ the pair $\beta(e+m)$, $\beta(e+m+1)$ is stable and for all $m\geq2$
the following holds 
\begin{equation*}
\deg V(e+m)=\deg V(e+1)+3m-2.
\end{equation*}
\indent \textbf{2b\emph{)}} $\deg V(e+2)\leq \deg V(e+1)$, then $\beta(e+2)$%
, $\beta(e+3)$ is a stable pair and for all $m\geq2$ the pair $\beta(e+m)$, $%
\beta(e+m+1)$ is stable. For all $m\geq3$ we have the relation 
\begin{equation*}
\deg V(e+m)=\deg V(e+1)+3m-5.
\end{equation*}
\indent\textbf{2c\emph{)}} $\deg V(e+2)=1+\deg V(e+1)$, then the pair $%
\beta(e+1)$, $\beta(e+2)$ is critical.
\end{proposition}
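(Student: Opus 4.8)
The plan is to substitute the critical hypothesis $\deg V(e+1)=1+\deg V(e)$ into the Fibonacci recurrence of Theorem~\ref{thm1.1}, $V(e+2)=(s^{3}-s)V(e+1)+s^{4}V(e)$, and keep track only of the top-degree terms. Writing $d_{0}=\deg V(e)$ and $d_{1}=\deg V(e+1)=d_{0}+1$, the two summands $(s^{3}-s)V(e+1)$ and $s^{4}V(e)$ have degrees $3+d_{1}$ and $4+d_{0}$, which coincide: both equal $d_{1}+3$. Hence the coefficient of $s^{d_{1}+3}$ in $V(e+2)$ is precisely the sum $C$ of the two leading coefficients. This single observation organizes the entire argument: either $C\neq 0$ and no cancellation occurs at the top, or $C=0$ and one is forced to descend to lower-order terms.

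In Case 1 ($C\neq 0$) I would conclude at once that $\deg V(e+2)=d_{1}+3>d_{1}+1$, with leading coefficient $C$, so that the pair $\beta(e+1),\beta(e+2)$ is \emph{stable}. Applying Proposition~\ref{prop3.2} to this stable pair (with its first index shifted to $e+1$) then delivers the stability of every later pair, the growth formula $\deg V(e+m)=\deg V(e+1)+3(m-1)$, and the persistence of the leading coefficient $C$ for $m\geq 2$.

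In Case 2 ($C=0$) the term $s^{d_{1}+3}$ annihilates, and I would expand $(s^{3}-s)V(e+1)+s^{4}V(e)$ one and two orders further: the coefficient of $s^{d_{1}+2}$ is the sum of the two sub-leading coefficients, while the coefficient of $s^{d_{1}+1}$ additionally absorbs the contribution of $-s\cdot V(e+1)$. After the top cancellation the integer $\deg V(e+2)$ can only be $d_{1}+2$, $d_{1}+1$, or $\leq d_{1}$, and these three possibilities are exactly the sub-cases. The crux is to recognize that each value identifies the \emph{type} of the shifted pair $\beta(e+1),\beta(e+2)$: in 2a ($\deg V(e+2)=d_{1}+2>d_{1}+1$) it is stable, so Proposition~\ref{prop3.2} applies again and gives the corresponding linear degree growth; in 2b ($\deg V(e+2)\leq d_{1}=\deg V(e+1)$) it is \emph{semistable}, so Proposition~\ref{prop3.3} applies and yields $\deg V(e+m)=\deg V(e+1)+3m-5$ for $m\geq 3$; and in 2c ($\deg V(e+2)=d_{1}+1$) it is again \emph{critical}.

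The step I expect to be the main obstacle is exactly the Case 2 bookkeeping: because the top terms cancel, pinning down $\deg V(e+2)$ requires a careful expansion of the next coefficients of $V(e)$ and $V(e+1)$ through the recurrence, and one must confirm that once $s^{d_{1}+3}$ has vanished the three outcomes $d_{1}+2$, $d_{1}+1$, $\leq d_{1}$ genuinely exhaust all possibilities. Sub-case 2c is where completeness breaks down: there the critical relation simply reproduces itself at $\beta(e+1),\beta(e+2)$, so the reduction can recur indefinitely, and this is precisely why no uniform closed conclusion is available in the critical case.
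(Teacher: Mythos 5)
Your proposal is correct and is essentially the paper's own proof: the same key observation (under the critical hypothesis both summands of the recurrence have top degree $\deg V(e+1)+3$, so the coefficient of $s^{\deg V(e+1)+3}$ in $V(e+2)$ is exactly $C$), the same application of Proposition~\ref{prop3.2} in Case 1, and the same identification of the shifted pair $\beta(e+1)$, $\beta(e+2)$ as stable, semistable, or critical in sub-cases 2a), 2b), 2c), with Propositions~\ref{prop3.2} and~\ref{prop3.3} finishing 2a) and 2b).

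One remark on the step you left implicit (``the corresponding linear degree growth''): in sub-case 2a), Proposition~\ref{prop3.2} applied to the stable pair $\beta(e+1)$, $\beta(e+2)$ yields $\deg V(e+m)=\deg V(e+2)+3(m-2)=\deg V(e+1)+3m-4$ for $m\geq 2$, not the printed $\deg V(e+1)+3m-2$, which is already inconsistent with the hypothesis of 2a) at $m=2$; so the discrepancy is an error in the stated formula, not a gap in your reasoning (the analogous check for 2b) does confirm $3m-5$). Likewise, your restriction of the leading-coefficient claim in Case 1 to $m\geq 2$ is the accurate one, since at $m=1$ the leading coefficient of $V(e+1)$ need not equal $C$.
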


\begin{proof}
Case\thinspace 1) We prove it by induction on $m$. For $m=1,$ Theorem~\ref%
{thm1.1} implies 
\begin{equation*}
V(e+2)=(s^{3}-s)V(e+1)+s^{4}V(e),
\end{equation*}%
and condition $C\neq 0$ implies that $V(e+2)$ has degree $\deg V(e+1)+3$ and
leading coefficient $C$. We can apply Proposition~\ref{prop3.2} for the
stable pair $\beta (e+1)$, $\beta (e+2)$.\newline
\indent Case\thinspace 2) The hypotheses imply that $\beta (e+1)$, $\beta
(e+2)$ is $a$) stable, $b$) semistable, and $c$) critical, respectively.
\end{proof}

\begin{remark}
In the critical case the degree of $V(e+2)$ can be arbitrarily small
compared to degree of $V(e)$; see, in Section~\ref{sec4}, the family $%
V_{3}(a_{1},1,3,1)$ and also the degrees in the sequence $V_{\Delta}(6k+1)$, 
$V_ {\Delta} (6k+2) $, $V_ {\Delta} (6k+3) $.
\end{remark}

The order of $V(e)$ is increasing for large value of $e$.

\begin{proposition}
\label{prop3.5} With the same notations we have:\newline
\indent\textbf{a\emph{)}}\thinspace\ $\emph{ord}\,V(e+2)\geq 1+\min (\emph{%
ord}\,V(e),\emph{ord}\,V(e+1)).$\newline
\indent\textbf{b\emph{)}} For any $m\geq 2$,\thinspace \thinspace\ $\emph{ord%
}\,V(e+m)\geq \min (\emph{ord}\,V(e),\emph{ord}\,V(e+1))+(m-1).$\newline
In particular, if $e\gg 0$, then $V(e)$ is a polynomial in $s$.
\end{proposition}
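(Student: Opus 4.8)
The plan is to derive part (a) directly from the recurrence relation in Theorem~\ref{thm1.1}, then bootstrap it to part (b) by induction on $m$, and finally read off the conclusion that $V(e)$ is a genuine polynomial for $e\gg 0$. The key observation is that the order behaves well under the recurrence precisely because the two coefficient polynomials $s^3-s$ and $s^4$ both have strictly positive order: $\mathrm{ord}(s^3-s)=1$ and $\mathrm{ord}(s^4)=4$. This is the mirror image of the degree analysis in Propositions~\ref{prop3.2}--\ref{prop3.4}, and I expect the order to be easier to control than the degree since there is no competition from a negative leading term — both terms on the right push the order up rather than risking cancellation at the bottom.

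First I would prove part (a). Starting from
\begin{equation*}
V(e+2)=(s^{3}-s)V(e+1)+s^{4}V(e),
\end{equation*}
I use the elementary facts that $\mathrm{ord}(fg)=\mathrm{ord}(f)+\mathrm{ord}(g)$ and $\mathrm{ord}(f+g)\geq\min(\mathrm{ord}(f),\mathrm{ord}(g))$. This gives
\begin{equation*}
\mathrm{ord}\bigl((s^{3}-s)V(e+1)\bigr)=1+\mathrm{ord}\,V(e+1),\qquad
\mathrm{ord}\bigl(s^{4}V(e)\bigr)=4+\mathrm{ord}\,V(e),
\end{equation*}
and hence $\mathrm{ord}\,V(e+2)\geq\min\bigl(1+\mathrm{ord}\,V(e+1),\,4+\mathrm{ord}\,V(e)\bigr)\geq 1+\min(\mathrm{ord}\,V(e),\mathrm{ord}\,V(e+1))$, which is exactly (a).

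Next I would prove part (b) by induction on $m$. Write $\mu=\min(\mathrm{ord}\,V(e),\mathrm{ord}\,V(e+1))$. The base case $m=2$ is just part (a). For the inductive step, the natural quantity to track is the pair of orders $(\mathrm{ord}\,V(e+m-1),\mathrm{ord}\,V(e+m))$, and I would show by a simultaneous induction that $\mathrm{ord}\,V(e+j)\geq\mu+(j-1)$ holds for all relevant $j$. Applying part (a) with $e$ replaced by $e+m-1$ gives
\begin{equation*}
\mathrm{ord}\,V(e+m+1)\geq 1+\min\bigl(\mathrm{ord}\,V(e+m-1),\mathrm{ord}\,V(e+m)\bigr)\geq 1+\bigl(\mu+(m-2)\bigr)=\mu+(m-1),
\end{equation*}
which advances the induction. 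The main (and only real) subtlety here is bookkeeping: the shifted-index version of part (a) references $V(e+m-1)$, so I must make sure the induction hypothesis covers both $V(e+m-1)$ and $V(e+m)$ at each stage; carrying the statement as a claim about all $j$ up to $m$ simultaneously, rather than a single value, handles this cleanly.

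Finally, for the concluding sentence, I would observe that $V(e)$ is a priori a Laurent polynomial in $s$, and part (b) shows $\mathrm{ord}\,V(e+m)\to+\infty$ as $m\to\infty$. Once $\mathrm{ord}\,V(e)\geq 0$ — which holds for all $e$ large enough by (b) — the Laurent polynomial $V(e)$ has no negative-power terms and is therefore an honest polynomial in $s$. This is the least technical step; the whole proposition is really just the order-side counterpart of the recurrence, and I do not anticipate any genuine obstacle beyond the indexing care noted above.
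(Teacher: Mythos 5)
Part (a) of your proposal is correct and is exactly the paper's argument: apply $\mathrm{ord}(fg)=\mathrm{ord}(f)+\mathrm{ord}(g)$ and $\mathrm{ord}(f+g)\geq\min(\mathrm{ord}\,f,\mathrm{ord}\,g)$ to the recurrence. The gap is in your inductive step for (b). Your hypotheses give $\mathrm{ord}\,V(e+m-1)\geq\mu+(m-2)$ and $\mathrm{ord}\,V(e+m)\geq\mu+(m-1)$, so your displayed chain yields $\mathrm{ord}\,V(e+m+1)\geq 1+\bigl(\mu+(m-2)\bigr)=\mu+(m-1)$; but the claim for $j=m+1$ requires $\mu+m$. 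The step falls short by one, and this is not a bookkeeping slip that more careful indexing can repair: the bound in (b) simply does not follow from the \emph{statement} of (a). Indeed, the integer sequence defined by $a_0=a_1=0$ and $a_{m+2}=1+\min(a_m,a_{m+1})$ satisfies the conclusion of (a) at every step but grows like $\lfloor m/2\rfloor$, violating the linear bound $\mu+(m-1)$ of (b). Once the orders start increasing, the minimum in (a) is always attained by the lagging term, so the ``$+1$ per step'' never accumulates at the rate you need.

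The repair is to run the induction on the sharper, asymmetric inequality you actually derived on the way to (a) and then discarded, namely
\begin{equation*}
\mathrm{ord}\,V(e+m+1)\;\geq\;\min\bigl(1+\mathrm{ord}\,V(e+m),\;4+\mathrm{ord}\,V(e+m-1)\bigr).
\end{equation*}
With the same hypotheses this gives $\min\bigl(1+\mu+(m-1),\,4+\mu+(m-2)\bigr)=\min(\mu+m,\mu+m+2)=\mu+m$, exactly what is needed: the order-$4$ coefficient $s^4$ on the lagging term more than compensates for its lag of one. (The paper's own proof is the one-line ``(b) comes from (a) by induction,'' which read literally suffers the same defect; it must be read as ``by induction using the recurrence as in the proof of (a).'') Your final paragraph, polynomiality of $V(e)$ for $e\gg 0$, is fine once (b) --- or even any lower bound on the order tending to infinity --- is in place.
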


\begin{proof}
$\mathbf{\emph{a}})$ is a direct consequence of Theorem~\ref{thm1.1} and $b)$
comes from \emph{a}) by induction.
\end{proof}

\begin{proof}[Proof of Proposition~\protect\ref{prop1.4}]
First choose $e_{0}\geq e$ such that for any $m\geq e_{0}$, $V(m)$ is a
polynomial in $s$, as in the previous proposition. Let us suppose that in
the sequence $\beta (e_{0}),\beta (e_{0}+1),\dots $ there is a stable (or
semistable) pair, say $\beta (e_{1}),\beta (e_{1}+1)$. Then according to
Proposition~\ref{prop3.2} (or Proposition~\ref{prop3.3} ), all the
consecutive pairs in the sequence $\beta (e_{1}+1),\beta (e_{1}+2),\dots $
are stable. Therefore $\deg V(e_{1}+m)\rightarrow \infty $ as $m\rightarrow
\infty $. If any pair of the sequence $\beta (e_{0}),\beta (e_{0}+1),\dots $
is critical, then the sequence $\deg \beta (e_{0}),\deg \beta
(e_{0}+1),\dots $ is arithmetic and again we get the desired result. \newline
\emph{Second Proof}. We can use Proposition~\ref{prop3.5} along with the
fact that Jones polynomial cannot be $0$.
\end{proof}

Similar results can be proved for negative exponents.

\begin{proposition}
\label{prop3.6} With $e<0$ we have:\newline
\indent\textbf{a\emph{)}}\, $\deg V(e-2)\leq \max(\deg V(e-1),\deg V(e))-1.$%
\newline
\indent\textbf{b\emph{)}} For any $m\geq2,$\, $\deg V(e-m)\leq \max(\deg
V(e-1),\deg V(e))-(m-1).$\newline
In particular, for $e\ll0$, $V(e)$ is a polynomial in $s^{-1}$ and 
\begin{equation*}
\lim_{e\rightarrow -\infty} \deg V(e)= -\infty.
\end{equation*}
\end{proposition}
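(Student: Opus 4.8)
The plan is to run the recurrence of Theorem~\ref{thm1.1} backwards. Solving $V(e+2)=(s^{3}-s)V(e+1)+s^{4}V(e)$ for the lowest-index term and shifting $e\mapsto e-2$ produces the backward recurrence
\begin{equation*}
V(e-2)=s^{-4}V(e)-(s^{-1}-s^{-3})V(e-1),
\end{equation*}
valid for every $e$ since the recurrence of Theorem~\ref{thm1.1} holds for all integers. This single identity is the engine of the whole statement; everything else is degree bookkeeping, using that $\deg(fg)=\deg f+\deg g$ and $\deg(f+g)\leq\max(\deg f,\deg g)$ for Laurent polynomials, together with the fact that Jones polynomials are never zero (so no degenerate cancellation of leading terms forces an equality we do not control).

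For part a) I would simply read off the degrees of the two terms on the right. The first term $s^{-4}V(e)$ has degree $\deg V(e)-4$. For the second, the crucial observation is that the coefficient $s^{-1}-s^{-3}$ has degree $-1$ (its top term is $-s^{-1}$), so $(s^{-1}-s^{-3})V(e-1)$ has degree $\deg V(e-1)-1$. Hence
\begin{equation*}
\deg V(e-2)\leq\max\{\deg V(e)-4,\ \deg V(e-1)-1\}\leq\max(\deg V(e-1),\deg V(e))-1,
\end{equation*}
which is exactly a).

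For part b) I would set $M=\max(\deg V(e-1),\deg V(e))$ and prove $\deg V(e-m)\leq M-(m-1)$ by induction on $m$, feeding the bound $\deg V(e-m)\leq\max\{\deg V(e-m+2)-4,\ \deg V(e-m+1)-1\}$ coming from the backward recurrence. Since this is a two-step recurrence, I would establish $m=2$ directly from $\deg V(e-1),\deg V(e)\leq M$, then $m=3$ using the $m=2$ bound together with $\deg V(e-1)\leq M$; for $m\geq4$ I substitute the inductive bounds $\deg V(e-m+2)\leq M-(m-3)$ and $\deg V(e-m+1)\leq M-(m-2)$, whose two candidate values $M-m-1$ and $M-m+1$ have maximum $M-(m-1)$, closing the induction. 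The only place requiring care is that both contributions must decay in step so that the $\max$ lands exactly on $M-(m-1)$ rather than something weaker; this is precisely where the degree $-1$ of the coefficient $s^{-1}-s^{-3}$ (and not $-3$) is essential.

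Finally, the two consequences are immediate from b). The bound $\deg V(e-m)\leq M-(m-1)$ tends to $-\infty$ as $m\to\infty$, which is $\lim_{e\to-\infty}\deg V(e)=-\infty$. In particular there is an $e_{0}$ with $\deg V(e)\leq 0$ for all $e\leq e_{0}$, and a Laurent polynomial in $s$ with no strictly positive powers is a polynomial in $s^{-1}$, so $V(e)$ is a polynomial in $s^{-1}$ for $e\ll0$. This mirrors the order estimate of Proposition~\ref{prop3.5} with the roles of degree and order (and of $s$ and $s^{-1}$) interchanged. I do not anticipate a genuine obstacle: the content is the backward recurrence and the correct reading of the coefficient's degree, and the only real subtlety is the index bookkeeping in the two-step induction.
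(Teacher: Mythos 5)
Your proof is correct and follows essentially the same route as the paper: the paper's entire proof is the one-line instruction to use the recurrence in the backward form $V(e-2)=(s^{-3}-s^{-1})V(e-1)+s^{-4}V(e)$, which is exactly your engine identity. Your degree bookkeeping (the coefficient $s^{-3}-s^{-1}$ having degree $-1$, the two-step induction, and the passage to the limit) just makes explicit the details the paper leaves to the reader.
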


\begin{proof}
Use the recurrence relation in the form $%
V(e-2)=(s^{-3}-s^{-1})V(e-1)+s^{-4}V(e).$
\end{proof}


\section{Examples}

\label{sec4} Three types of examples are given: arbitrary braids in $%
\mathcal{B}_{2}$, braids $x_{1}^{a_{1}}x_{2}^{a_{2}}\dots x_{2}^{a_{2L}}$ in 
$\mathcal{B}_{3}$ (with complete results for $L\leq 2$) and powers of
Garside braid $\Delta _{3}$. Some of the results are well known, especially
those which are connected with torus links, some seem to be new, and all of
them show how to use the recurrence relation.

\begin{proposition}
\label{prop4.1} Let $x_{1}^{a}$, $a\in\mathbb{Z}$, be a braid in $\mathcal{B}%
_{2}$, and $\widehat{x_{1}^{a}}$ the corresponding link. Its Jones
polynomial is given by: for $a\leq-2$ 
\begin{equation*}
V_{2}(a)=-s^{3a+1}+s^{3a+3}-s^{3a+5}+\dots
-(-1)^{a+1}s^{a-5}+(-1)^{a+1}s^{a-3}+(-1)^{a+1}s^{a+1},
\end{equation*}
for the next three values 
\begin{equation*}
V_2(-1)=1,\, V_2(0)=-s-s^{-1},\, V_2(1)=1,
\end{equation*}
and for $a\geq2$ 
\begin{equation*}
V_{2}(a)=-s^{3a-1}+s^{3a-3}-s^{3a-5}+\dots
-(-1)^{a+1}s^{a+5}+(-1)^{a+1}s^{a+3}+(-1)^{a+1}s^{a-1}.
\end{equation*}
\end{proposition}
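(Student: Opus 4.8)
The plan is to split the computation into the three base links and then push the recurrence of Theorem~\ref{thm1.1} in both directions. First I would pin down the small cases geometrically: the closure $\widehat{x_1^{\pm1}}$ is the unknot, so $V_2(\pm1)=1$, while $\widehat{x_1^{0}}$ is the two–component unlink, whose Jones polynomial in the variable $s=q^{-1/2}$ is $-s-s^{-1}$. This fixes $V_2(-1),V_2(0),V_2(1)$ as stated and supplies two consecutive values from which to launch the recurrence.

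For $a\ge 2$ I would solve the linear recurrence $V_2(e+2)=(s^3-s)V_2(e+1)+s^4V_2(e)$ explicitly. Its characteristic equation $r^2=(s^3-s)r+s^4$ has roots $r_1=-s$ and $r_2=s^3$ (already recorded in the proof of Theorem~\ref{thm1.2}), so $V_2(a)=A\,(-s)^a+B\,s^{3a}$ with $A,B$ determined by the values at $a=0,1$. Solving the resulting $2\times2$ system gives $B=-s/(s^2+1)$ and $A=-(s^4+s^2+1)/\big(s(s^2+1)\big)=-s-1/\big(s(s^2+1)\big)$. Substituting and collecting yields
\[
V_2(a)=(-1)^{a+1}s^{a+1}+\frac{(-1)^{a+1}s^{a-1}-s^{3a+1}}{s^2+1}.
\]

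The core step is to convert this rational expression into the claimed alternating polynomial. Factoring the numerator as $-s^{a-1}\big(s^{2(a+1)}+(-1)^a\big)$ and checking that $s^2+1$ divides $s^{2(a+1)}+(-1)^a$ (the roots $\pm i$ are killed in either parity of $a$), the quotient is the geometric-type sum $\sum_{j=0}^{a}(-1)^{a-j}s^{2j}$. Multiplying back by $-s^{a-1}$ produces terms at exponents $a-1,a+1,\dots,3a-1$ with alternating signs; the crucial observation is that the $j=1$ contribution equals $-(-1)^{a+1}s^{a+1}$ and cancels exactly the leading term $(-1)^{a+1}s^{a+1}$. This cancellation is precisely what creates the gap between $s^{a-1}$ and $s^{a+3}$ in the stated formula, and it is the step I expect to demand the most careful bookkeeping. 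An equivalent, perhaps cleaner, route is to bypass the closed form and prove the displayed formula directly by induction on $a$ using $V_2(a+1)=(s^3-s)V_2(a)+s^4V_2(a-1)$, verifying that the alternating pattern together with the missing $s^{a+1}$ term is reproduced; the obstacle is the same, namely confirming the cancellation survives the inductive step.

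Finally, for $a\le -2$ I would avoid repeating the analysis and instead invoke mirror symmetry: $\widehat{x_1^{-a}}$ is the mirror of $\widehat{x_1^{a}}$, so $V_2(-a)(s)=V_2(a)(s^{-1})$. Replacing $s$ by $s^{-1}$ in the $a\ge2$ formula sends each exponent $E$ to $-E$ and preserves the signs via $(-1)^{-a+1}=(-1)^{a+1}$, reproducing the negative-exponent formula term by term, including the shifted gap at exponent $-a-1$. Alternatively one can run the recurrence backwards in the form $V(e-2)=(s^{-3}-s^{-1})V(e-1)+s^{-4}V(e)$ from the same base values. Since the normalized Jones polynomial is an ambient isotopy invariant, the mirror relation is exact with no unit correction, so the negative case follows immediately once the sign matching is checked.
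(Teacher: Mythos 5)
Your proposal is correct and follows essentially the same route as the paper: it too fixes the base values $V_2(0)=-s-s^{-1}$, $V_2(\pm 1)=1$, solves the recurrence via the roots $-s$, $s^3$ to obtain the closed form $V_{2}(a)=\left(\frac{-s}{1+s^{2}}\right)s^{3a}+\left(\frac{-1-s^{2}-s^{4}}{s+s^{3}}\right)(-s)^{a}$ (your $A$ and $B$), and then passes to the alternating polynomial by computations it calls elementary and that you carry out in full. The only divergence is the negative case, where you invoke topological mirror symmetry $V_2(-a)(s)=V_2(a)(s^{-1})$, while the paper gets the same identity algebraically by noting that both coefficients $A$ and $B$ are invariant under $s\mapsto s^{-1}$; the two justifications are interchangeable here.
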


\begin{proof}
The Jones polynomials of the trivial two-component link and trivial knot are
given by 
\begin{equation}
V_{2}(0)=-s-s^{-1},V_{2}(\pm 1)=1.  \label{eq3.1}
\end{equation}%
From the basic recurrence relation 
the general term is 
\begin{equation}
V_{2}(a)=\left( \frac{-s}{1+s^{2}}\right) (s^{3})^{a}+\left( \frac{%
-1-s^{2}-s^{4}}{s+s^{3}}\right) (-s)^{a}\,.  \label{eq3.4}
\end{equation}%
Elementary computations give the desired result.

For $a\leq -2$, the coefficients $\frac{-s}{1+s^{2}}$ and $\frac{%
-1-s^{2}-s^{4}}{s+s^{3}}$ are invariant under $s\rightarrow s^{-1}$,
therefore . 
\begin{equation}
V_{2}(a)(s)=\left( \frac{-s}{1+s^{2}}\right) (s^{3})^{a}+\left( \frac{%
-1-s^{2}-s^{4}}{s+s^{3}}\right) (-s)^{a}=V_{2}(-a)(s^{-1})\,,  \label{eq3.44}
\end{equation}%
and this proves the formula for negative exponents.
\end{proof}

\begin{proposition}
\label{prop4.2} Let $\alpha,\beta \in\mathcal{B}_{n}$, $\gamma=\alpha\beta$,
and $\widetilde{\gamma_{k}}=\alpha x_{n}^{k}\beta \in\mathcal{B}_{n+1}$ $%
(k\in \mathbb{Z})$, then 
\begin{equation*}
V_{n+1}(\widetilde{\gamma_{k}})=V_{n}(\gamma)V_{2}(x_{1}^{k}).
\end{equation*}
\end{proposition}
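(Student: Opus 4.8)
The plan is to treat both sides of the identity as bi-infinite sequences indexed by $k$ and to show they satisfy the same linear recurrence, reducing the whole statement to the verification of two base cases. First I would observe that the left-hand side $V_{n+1}(\widetilde{\gamma_k})=V_{n+1}(\alpha x_n^{k}\beta)$ is, for the fixed ambient data $\alpha,\beta$ and the distinguished generator $x_n$, precisely a sequence of the type governed by Theorem~\ref{thm1.1}. Hence for every $k\in\mathbb{Z}$,
\[
V_{n+1}(\widetilde{\gamma_{k+2}})=(s^{3}-s)V_{n+1}(\widetilde{\gamma_{k+1}})+s^{4}V_{n+1}(\widetilde{\gamma_{k}}).
\]

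On the right-hand side, $V_n(\gamma)$ is a scalar independent of $k$, while $V_2(x_1^{k})$ satisfies the very same recurrence by Theorem~\ref{thm1.1} applied in $\mathcal{B}_2$; multiplying a solution by the $k$-independent factor $V_n(\gamma)$ again yields a solution. Thus both sides solve the order-two recurrence $a_{k+2}=(s^{3}-s)a_{k+1}+s^{4}a_{k}$. Since the coefficient $s^{4}$ is invertible, this recurrence can be run in either direction, so a bi-infinite solution is uniquely determined by any two consecutive values. It therefore suffices to match the two sides at $k=0$ and $k=1$.

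For $k=1$ I would use invariance of the closure under conjugation to write $\widehat{\alpha x_n\beta}=\widehat{(\beta\alpha)\,x_n}$, where $\beta\alpha\in\mathcal{B}_n$; a single Markov destabilization removes the terminal $x_n$ and identifies this closure with $\widehat{\beta\alpha}=\widehat{\gamma}$. Combined with $V_2(x_1)=1$ (the unknot), both sides equal $V_n(\gamma)$. For $k=0$ the braid $\widetilde{\gamma_0}=\alpha\beta=\gamma$ carries a trivial $(n+1)$-st strand, so its closure is $\widehat{\gamma}$ together with a split unknot; this multiplies the Jones polynomial by $-s-s^{-1}$, giving $V_{n+1}(\widetilde{\gamma_0})=(-s-s^{-1})V_n(\gamma)$, which matches $V_n(\gamma)V_2(x_1^{0})=V_n(\gamma)(-s-s^{-1})$ since $V_2(0)=-s-s^{-1}$. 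Agreement at these two indices then propagates to all $k\in\mathbb{Z}$.

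The main obstacle is the careful execution of the two base cases through Markov moves: I must ensure the conjugation genuinely brings $x_n$ into terminal position so that destabilization is legitimate (the generators occurring in $\alpha$ and $\beta$ all lie in $\mathcal{B}_n$, so $x_n$ is the unique stabilizing generator), and I must correctly account for the split-unknot factor at $k=0$. Everything else is the formal fact that two sequences satisfying the same invertible second-order recurrence and agreeing at two consecutive indices must coincide. Geometrically this identity reflects that $\widehat{\widetilde{\gamma_k}}$ is the connected sum of $\widehat{\gamma}$ with the $(2,k)$-torus link $\widehat{x_1^{k}}$ and that the Jones polynomial is multiplicative under connected sum; the recurrence argument is attractive precisely because it avoids having to make this decomposition rigorous.
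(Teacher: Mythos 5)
Your proposal is correct and follows essentially the same route as the paper: both arguments observe that $f(k)=V_{n+1}(\widetilde{\gamma_k})$ and $g(k)=V_n(\gamma)V_2(x_1^k)$ satisfy the recurrence of Theorem~\ref{thm1.1}, and then match them at $k=0$ (trivial extra strand contributing the factor $-s-s^{-1}$) and $k=1$ (conjugation plus Markov destabilization). Your explicit remark that the invertibility of the coefficient $s^4$ lets the recurrence propagate in both directions, so that two consecutive values determine the whole bi-infinite sequence, is a point the paper leaves implicit but is exactly the justification needed for $k\in\mathbb{Z}$.
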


\begin{proof}
Define $f(k)=V_{n+1}(\widetilde{\gamma _{k}})$ and $g(k)=V_{n}(\gamma
)V_{2}(x_{1}^{k})$. These coincide for $k=0$: $g(0)=V_{n}(\gamma
)V_{2}(1)=V_{n}(\gamma )(-s-s^{-1})$ and $f(0)=V_{n+1}(\widetilde{ \gamma
_{0}})=V_{n+1}(\gamma )= V_{n}(\gamma )(-s-s^{-1}) $, and $k=1$: $%
g(1)=V_{n}(\gamma )V_{2}(x_{1})=V_{n}(\gamma )$ and $f(1)=V_{n+1}(\alpha
x_{n}\beta )=V_{n+1}(\beta \alpha x_{n})=V_{n}(\beta \alpha )=V_{n}(\alpha
\beta )=V_{n}(\gamma )$; also $f(k)$ and $g(k)$ satisfy the same recurrence
relation.
\end{proof}

\begin{corollary}
\label{prop4.3} $%
V_{3}(x_{1}^{a_{1}}x_{2}^{a_{2}})=V_{2}(x_{1}^{a_{1}})V_{2}(x_{1}^{a_{2}}).$
\end{corollary}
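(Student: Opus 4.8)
The plan is to obtain this as an immediate specialization of Proposition~\ref{prop4.2}. First I would apply that proposition with $n=2$, so that $\mathcal{B}_{n+1}=\mathcal{B}_3$ and the adjoined generator $x_n$ becomes $x_2$. Taking $\alpha=x_1^{a_1}\in\mathcal{B}_2$ and $\beta$ equal to the identity (empty) braid, one has $\gamma=\alpha\beta=x_1^{a_1}$ and $\widetilde{\gamma_k}=\alpha\, x_2^{k}\,\beta=x_1^{a_1}x_2^{k}\in\mathcal{B}_3$. Proposition~\ref{prop4.2} then gives
\[
V_3(x_1^{a_1}x_2^{k})=V_2(x_1^{a_1})V_2(x_1^{k}),
\]
and the substitution $k=a_2$ produces precisely the asserted identity.

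The remaining steps are purely bookkeeping. I would check that the hypotheses of Proposition~\ref{prop4.2} are met, namely that $\alpha$ and $\beta$ genuinely lie in $\mathcal{B}_2$; this is clear, since $\alpha=x_1^{a_1}$ is a power of the unique generator of $\mathcal{B}_2$ and $\beta$ is trivial. I would also make explicit that $V_2(x_1^{k})$ is the Jones polynomial of the $(2,k)$-torus link already computed in Proposition~\ref{prop4.1}, so that the right-hand side is fully determined and the factorization is meaningful.

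There is essentially no obstacle here: all the real work is carried by Proposition~\ref{prop4.2}, whose proof in turn rests on matching the two sides at $k=0$ and $k=1$ and invoking the common Fibonacci-type recurrence of Theorem~\ref{thm1.1}. Hence the corollary follows by direct substitution, the only genuine point requiring care being the correct indexing of the new strand (namely $x_2$) when passing from $\mathcal{B}_2$ to $\mathcal{B}_3$.
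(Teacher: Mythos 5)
Your proposal is correct and is precisely the argument the paper intends: the result is stated as a corollary of Proposition~\ref{prop4.2} with no written proof, the implicit derivation being exactly your specialization $n=2$, $\alpha=x_{1}^{a_{1}}$, $\beta$ the trivial braid, $k=a_{2}$. Nothing further is required.
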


Now we compute the degree and the leading coefficient for Jones polynomial $%
V_{3}(a_{1},a_{2},a_{3},a_{4})=V_{3}(x_{1}^{a_{1}}x_{2}^{a_{2}}x_{1}^{a_{3}}x_{2}^{a_{4}}) 
$, where $a_{i}\geq 1$. We denote by $D$ the total degree $%
a_{1}+a_{2}+a_{3}+a_{4}$.

\begin{proposition}[generic case]
\label{prop4.4} If $a_{1},a_{2},a_{3},a_{4}\geq2$, then the leading term of
Jones polynomial $V_{3}(a_{1},a_{2},a_{3},a_{4})$ is $+s^{3D-4}$.
\end{proposition}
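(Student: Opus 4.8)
The plan is to apply the expansion formula of Theorem~\ref{thm1.2} to the four-factor braid $\beta^{A_*}=x_1^{a_1}x_2^{a_2}x_1^{a_3}x_2^{a_4}$ and track only the single term in the $2^4=16$-term sum that controls the leading behavior. Recall that the basic polynomials are $P_0^{[a]}(s)=s^{3a}+(-1)^as^{a+2}$ and $P_1^{[a]}(s)=s^{3a-1}+(-1)^{a+1}s^{a-1}$. Since each $a_i\ge 2$, for the multi-index $J_*=(0,0,0,0)$ the product $P_0^{[a_1]}P_0^{[a_2]}P_0^{[a_3]}P_0^{[a_4]}$ has leading term $s^{3a_1}s^{3a_2}s^{3a_3}s^{3a_4}=s^{3D}$, which is strictly larger in degree than the leading term of any other $P_{j_1}^{[a_1]}\cdots P_{j_4}^{[a_4]}$ (replacing a $P_0^{[a_i]}$ by $P_1^{[a_i]}$ drops the top degree by $1$). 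The $J_*=(0,0,0,0)$ term also carries the base-case Jones polynomial $V_3(x_{i_1}^0\cdots x_{i_k}^0)=V_3(\text{trivial}) $, which is a nonzero constant we can compute; after dividing by $(s^2+1)^4$ this produces a candidate leading term of degree $3D-8$ in the full numerator, i.e. degree $3D-8-8=3D-4$ after accounting for the $(s^2+1)^{-4}$ denominator, matching the claim $+s^{3D-4}$.

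First I would write out the numerator of Theorem~\ref{thm1.2} as a Laurent polynomial and identify, term by term, the degree contributed by each $J_*\in\{0,1\}^4$. The key observation is that $\deg P_0^{[a_i]}=3a_i$ while $\deg P_1^{[a_i]}=3a_i-1$, so the degree of the product $P_{j_1}^{[a_1]}\cdots P_{j_4}^{[a_4]}$ is exactly $3D-(j_1+j_2+j_3+j_4)$. Thus the numerator's leading contribution comes uniquely from $J_*=\mathbf{0}$, provided the accompanying base value $V_3(x_1^0x_2^0x_1^0x_2^0)$ is nonzero and does not cancel. Since $x_1^0x_2^0x_1^0x_2^0$ is the trivial 3-braid whose closure is the three-component unlink, its Jones polynomial is $(-s-s^{-1})^2$, a nonzero Laurent polynomial of degree $2$ with leading coefficient $+1$; by Remark~\ref{rem2.3}b the general base values $V_3(x_{i_1}^{j_1}\cdots x_{i_k}^{j_k})$ reduce to powers of $(-s-s^{-1})$, so I would tabulate these and confirm no higher-degree base value rescues a lower-degree $P$-product.

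Next I would carry out the degree bookkeeping: the $J_*=\mathbf{0}$ term contributes $s^{3D}\cdot(-s-s^{-1})^2$ to the numerator, whose leading term is $s^{3D}\cdot s^2=s^{3D+2}$ with coefficient $+1$; meanwhile the denominator $(s^2+1)^4$ has leading term $s^8$. Dividing, the leading term of $V_3(a_1,a_2,a_3,a_4)$ is $s^{3D+2}/s^8=s^{3D-6}$. This does not immediately match $3D-4$, so the honest obstacle is that one cannot read off the degree from a single term of the expansion after dividing by $(s^2+1)^4$: the division is exact (the sum is a polynomial), and the top-degree terms of several $J_*$ contributions may combine. I therefore expect the main work to be verifying that, among the terms $J_*$ with $j_1+\cdots+j_4$ small (namely the all-zero term and the four weight-one terms), the genuinely dominant monomial after reduction is $+s^{3D-4}$; this requires pairing each $P_0^{[a_i]}$ and $P_1^{[a_i]}$ with the correct base value $V_3(x_1^{j_1}x_2^{j_2}x_1^{j_3}x_2^{j_4})$ and summing the top few coefficients.

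The hard part will be controlling these near-top cancellations rather than the raw degree count. Concretely, I would expand the numerator as a genuine polynomial, locate its true top-degree monomial (the $(s^2+1)^4$ factor divides it exactly, so the numerator has degree $3D-4+8=3D+4$ in its leading term), and verify the leading coefficient is $+1$ by checking that the weight-$0$ and weight-$1$ contributions produce a surviving monomial of degree $3D+4$ with coefficient $+1$. Since $a_i\ge 2$ guarantees all the $P$-products are honest polynomials (no Laurent tails interfere at the top), the verification reduces to a finite linear-algebra check over the five relevant multi-indices, using the explicit base values from Remark~\ref{rem2.3}b. I would finish by confirming divisibility by $(s^2+1)^4$ is consistent with the claimed degree $3D-4$ and leading coefficient $+1$, which pins down the leading term as $+s^{3D-4}$.
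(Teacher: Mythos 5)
There is a genuine gap: you have identified the wrong dominant block in the expansion, and the repair you sketch cannot close the gap. The degree of a $J_{*}$-block is $\deg\bigl(P_{j_1}^{[a_1]}\cdots P_{j_4}^{[a_4]}\bigr)+\deg V_3(j_1,j_2,j_3,j_4)$, and while the first summand is indeed maximized at $J_{*}=(0,0,0,0)$ (value $3D$, dropping by one for each $j_i=1$), the second summand swings by much more than the first: $\deg V_3(0,0,0,0)=2$ (three-component unlink), $\deg V_3(1,0,0,0)=1$, $\deg V_3(1,1,0,0)=0$ (unknot), but $\deg V_3(1,0,1,0)=\deg V_3(x_1^2)=6$ (Hopf link plus a split unknot) and $\deg V_3(1,1,1,1)=\deg V_{\Delta}(4)=8$ by Lemma~\ref{lem4.19}. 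Hence the block degrees are $3D+2$ for $J_{*}=(0,0,0,0)$, $3D$ for weight one, $3D-2$ for the weight-two blocks that close to the unknot, but $3D+4$ for $(1,0,1,0)$, $(0,1,0,1)$ and for $(1,1,1,1)$. So the maximum $3D+4$ is attained only by these three high-weight blocks --- exactly what the paper reads off from Table~2 --- and the leading coefficient of the numerator is $1+1-1=1$ (the two blocks closing to $x_1^2$, resp. $x_2^2$, with top term $+s^6$, against the block $(1,1,1,1)$ with top term $-s^8$), giving $+s^{3D+4}$ and, after dividing by $(s^2+1)^4$, the claimed $+s^{3D-4}$.

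Your own computation actually flags the inconsistency (the $J_{*}=(0,0,0,0)$ block only reaches $3D+2$, hence $3D-6$ after division), but your diagnosis and fix go the wrong way: you propose to recover the missing degree-$(3D+4)$ monomial from the weight-zero and weight-one blocks, and those have degrees $3D+2$ and $3D$, so no bookkeeping among them can produce it; the phenomenon is not a near-top cancellation inside the division but a plain misidentification of which blocks are largest. The root cause is your reading of Remark~\ref{rem2.3}: the base values $V_3(x_1^{j_1}x_2^{j_2}x_1^{j_3}x_2^{j_4})$ are \emph{not} powers of $(-s-s^{-1})$; after concatenation they include $x_1^2$, $x_1^2x_2$ and $x_1x_2x_1x_2$, whose Jones polynomials ($s^6+s^4+s^2+1$, $-s^5-s$, $-s^8+s^6+s^2$) must be computed separately (this is what Table~2 records), and the remark's reduction to square-free braids of small degree only applies after \emph{iterating} the expansion, not after a single application. (The arithmetic in your first paragraph, ``$3D-8-8=3D-4$'', is also wrong and contradicts your later, correct count $3D+2-8=3D-6$.) Once the sixteen base values are tabulated, the only remaining point is the non-cancellation $1+1-1\neq 0$, and the proof is complete.
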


\begin{proof}
Using the general expansion formula we find 
\begin{equation*}
V_{3}(a_{1},a_{2},a_{3},a_{4})=\frac{1}{(s^{2}+1)^{4}}\sum\limits_{J_{\ast
}\in
\{0,1%
\}^{4}}P_{j_{1}}^{[a_{1}]}P_{j_{2}}^{[a_{2}]}P_{j_{3}}^{[a_{3}]}P_{j_{4}}^{[a_{4}]}V_{3}(j_{1},j_{2},j_{3},j_{4})\,.
\end{equation*}%
The degree and leading coefficient of $V_{3}(j_{1},j_{2},j_{3},j_{4})$ are
in Table~2 (proof of Theorem~\ref{thm1.6} contains a user guide for the
table). In the above formula maximal degree is obtained from $%
V_{3}(1,0,1,0)=V_{3}(0,1,0,1)$ (coefficient $1+1$) and from $V_{3}(1,1,1,1)$
(coefficient $-1$).
\end{proof}

\begin{proposition}
\label{prop4.5} The leading terms of Jones polynomials $%
V_{3}(a_1,a_2,a_3,a_4)$ for positive exponents are given by the next table:

\mbox{}

\begin{tabular}{l}
$\mathbf{Table\,1}.$ $V_{3}(a_{1},a_{2},a_{3},a_{4})(s)$ \\ 
\end{tabular}

\mbox{}

\begin{tabular}{|l|c|l|}
\hline
$(a_{1},a_{2},a_{3},a_{4})$ & Critical cases & \quad \mbox{} \quad \mbox{}
Stable cases \\ \hline
$(1,1,a_{3},a_{4})$ & $-$ & $a_{3},a_{4}\geq 1:$ $-s^{3D-4}+\dots $ \\ \hline
$(a_{1},1,2,1)$ & $a_{1}=2:$ $2s^{3D-6}+\dots $ & $a_{1}\geq 3:$ $%
s^{3D-6}+\dots $ \\ \hline
$(a_{1},1,3,1)$ & 
\begin{tabular}{l}
$a_{1}=3:-s^{3D-8}+\dots $ \\ 
$a_{1}=4:-s^{3D-16}+\dots $ \\ 
\end{tabular}
& $a_{1}\geq 5:s^{3D-6}+\dots $ \\ \hline
$(a_{1},1,a_{3},1)$ & $-$ & $a_{1},a_{3}\geq 4:s^{3D-8}+\dots $ \\ \hline
$(a_{1},1,3,2)$ & $-$ & $a_{1}\geq 3:-s^{3D-6}+\dots $ \\ \hline
$(a_{1},1,a_{3},a_{4})$ & $-$ & $a_{1},a_{3},a_{4}\geq 3:-s^{3D-6}+\dots $
\\ \hline
$(a_{1},a_{2},a_{3},a_{4})$ & $-$ & $a_{1},a_{2},a_{3},a_{4}\geq 2
:s^{3D-4}+\dots $ \\ \hline
\end{tabular}
\end{proposition}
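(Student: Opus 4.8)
The plan is to read every entry off the expansion formula of Theorem~\ref{thm1.2},
\[ V_3(a_1,a_2,a_3,a_4)=\frac{1}{(s^2+1)^4}\sum_{J_*\in\{0,1\}^4}P_{j_1}^{[a_1]}P_{j_2}^{[a_2]}P_{j_3}^{[a_3]}P_{j_4}^{[a_4]}\,V_3(j_1,j_2,j_3,j_4), \]
after first tabulating the sixteen base values $V_3(j_1,j_2,j_3,j_4)$ (this is the auxiliary Table~$2$). Whenever some $j_i=0$ the braid $x_1^{j_1}x_2^{j_2}x_1^{j_3}x_2^{j_4}$ collapses to a product of two blocks, so Corollary~\ref{prop4.3} together with the explicit $2$-braid polynomials of Proposition~\ref{prop4.1} supply the degree and leading coefficient at once; the few fully positive values such as $V_3(1,1,1,1)$ I would compute directly after shortening the word with Artin relations and Markov moves. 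The key bookkeeping data are that for $a\ge2$ both $P_0^{[a]}$ and $P_1^{[a]}$ are monic of degrees $3a$ and $3a-1$, whereas the exceptional values $P_0^{[1]}=0$ and $P_1^{[1]}=s^2+1$ govern exactly the rows in which an exponent equals $1$ -- this is why those rows are singled out in the table.

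With the base table in hand, each stable entry follows by substituting into the formula and isolating the top-degree contribution. This is the mechanism already seen in Proposition~\ref{prop4.4}: for $a_i\ge2$ the maximal degree $3D-4$ is attained only at $J_*=(1,0,1,0),(0,1,0,1),(1,1,1,1)$, whose leading coefficients sum to $1+1-1=1$, giving $+s^{3D-4}$. For the remaining stable rows I would repeat this, using $P_0^{[1]}=0$ to discard the summands in which a unit exponent is paired with $j_i=0$; the surviving $J_*$ and their coefficient sum then produce the listed monomials, such as $-s^{3D-4}$, $s^{3D-6}$ and $s^{3D-8}$.

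To cover a whole family at once rather than re-expanding for each value of the free exponent, I would fix all but one exponent at the boundary of its range, compute the first one or two members from the formula, classify the initial pair as stable, semistable, or critical in the sense of Definition~\ref{def3.1}, and then propagate through the recurrence of Theorem~\ref{thm1.1}: Propositions~\ref{prop3.2} and~\ref{prop3.3} force the degree to increase by $3$ at each step with the leading coefficient frozen, which is precisely the content of the ``Stable cases'' column. For the two genuinely multivariable rows $(1,1,a_3,a_4)$ and $(a_1,1,a_3,a_4)$ I would iterate this, propagating first in $a_4$, then $a_3$, then $a_1$.

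The hard part will be the critical rows, where the top-degree coefficients cancel -- this is the case $C=0$ of Proposition~\ref{prop3.4} -- so the single-monomial estimate says nothing about the true leading term. Here I would fall back on the critical-case analysis of Proposition~\ref{prop3.4}, deciding between its subcases $2a$ and $2b$ -- and hence how far the degree drops -- by computing the subleading terms of the relevant base polynomials. The decisive entry is the family $(a_1,1,3,1)$, where the cancellation recurs: $a_1=3$ and $a_1=4$ each create a new critical pair, producing the irregular leading terms $-s^{3D-8}$ and $-s^{3D-16}$ and the non-monotone degrees flagged in the remark after Proposition~\ref{prop3.4}, before the family finally stabilizes at $a_1\ge5$ with leading term $s^{3D-6}$. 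Showing that these nested cancellations stop exactly where the table asserts, and that no further collapse happens for $a_1\ge5$, is the crux; it is the one place where tracking two or three top coefficients of the base values, rather than a single monomial, cannot be avoided.
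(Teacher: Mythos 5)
Your toolkit is the same as the paper's: the paper also proves Table~1 row by row with a mix of (i) the expansion formula plus the observation $P_{0}^{[1]}=0$, which cuts the sum down to the $4$ or $8$ surviving $J_{\ast}$-blocks (rows $(a_{1},1,a_{3},1)$, $(a_{1},1,a_{3},a_{4})$, and the generic last row via Proposition~\ref{prop4.4}), and (ii) computation of one or two initial values followed by the stable/semistable/critical propagation of Propositions~\ref{prop3.2}--\ref{prop3.4} (rows $(a_{1},1,2,1)$, $(a_{1},1,3,1)$, $(a_{1},1,3,2)$); the only real divergence is that the paper kills row $(1,1,a_{3},a_{4})$ instantly by the braid identity $x_{1}x_{2}x_{1}^{a_{3}}x_{2}^{a_{4}}\sim x_{1}^{a_{3}+a_{4}+1}x_{2}$ rather than by iterated propagation. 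However, the crux you single out, the row $(a_{1},1,3,1)$, is exactly where your plan fails as stated. Carry out your own recipe: $V_{3}(3,1,3,1)=-s^{16}+s^{10}+s^{6}$ and $V_{3}(4,1,3,1)=-s^{11}-s^{7}$ form a \emph{semistable} pair, so Proposition~\ref{prop3.3} freezes the leading coefficient at $-1$ and gives $\deg V_{3}(3+m,1,3,1)=16+3m-2$, i.e.\ leading term $-s^{3a_{1}+5}=-s^{3D-10}$ for all $a_{1}\geq 5$, not $s^{3D-6}$. Indeed, one recurrence step gives
\begin{equation*}
V_{3}(5,1,3,1)=(s^{3}-s)(-s^{11}-s^{7})+s^{4}(-s^{16}+s^{10}+s^{6})=-s^{20}+s^{12}+s^{8},
\end{equation*}
of degree $20=3D-10$, whereas the table claims degree $3D-6=24$ with coefficient $+1$. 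So the assertion that ``the nested cancellations stop exactly where the table asserts'' is not something you can prove: your method (which is also the paper's) refutes that cell rather than establishing it. Any correct writeup must either replace $s^{3D-6}$ by $-s^{3D-10}$ in that entry or acknowledge the discrepancy; as it stands, this step of your proposal would fail.

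A smaller imprecision: you localize the need for tracking several top coefficients to the critical rows, but the ``stable'' rows obtained from the expansion formula need it just as much. For $(a_{1},1,a_{3},1)$ with $a_{1},a_{3}\geq 4$, the two top blocks $P_{0}^{[a_{1}]}P_{0}^{[a_{3}]}V_{3}(x_{2}^{2})$ and $P_{1}^{[a_{1}]}P_{1}^{[a_{3}]}V_{3}(x_{1}x_{2}x_{1}x_{2})$ both have degree $3a_{1}+3a_{3}+6$ and their leading coefficients cancel, as do the contributions two degrees below; the answer $s^{3D-8}$ only appears after multiplying out the \emph{whole} base polynomials $s^{6}+s^{4}+s^{2}+1$, $-s^{5}-s$, $-s^{8}+s^{6}+s^{2}$ against the whole binomials $P_{j}^{[a]}$ (one finds the block sum equals $s^{3a_{1}+3a_{3}+2}+\cdots$, then divides by $(s^{2}+1)^{2}$). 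The same cancellation occurs in the row $(a_{1},1,a_{3},a_{4})$. So ``isolating the top-degree contribution'' must mean polynomial arithmetic with full base values from the outset, as in the paper's explicit $4$-block and $8$-block displays, not the single-monomial bookkeeping that suffices for the last row.
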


\begin{proof}
For the first line we can use $%
x_{1}x_{2}x_{1}^{a_{3}}x_{2}^{a_{4}}=x_{2}^{a_{3}}x_{1}x_{2}^{a_{4}+1}\sim
x_{1}^{a_{3}+a_{4}+1}x_{2}$. In the second line we start the recurrence with 
$V_{3}(1,1,2,1)$ and $V_{3}(2,1,2,1)=V_{\Delta}(6)$, a critical case with $%
C=1$. For the family $V_{3}(a_{1},1,3,1)$ the recurrence starts with $%
V_{3}(1,1,3,1)$ and $V_{3}(2,1,3,1)$ and we obtain (a critical case) $%
V_{3}(3,1,3,1)=-s^{16}+s^{10}+s^{6}$ and $V_{3}(4,1,3,1)=-s^{11}-s^{7}$ and
this is semistable. In the generic case $V_{3}(a_{1},1,a_{3},1)$, $%
a_{1},a_{3}\geq 4$, the expansion formula has only 4 $J_*$-blocks: 
\begin{equation*}
P_{0}^{[a_{1}]}P_{0}^{[a_{3}]}(s^{6}+\dots)+\left(
P_{1}^{[a_{1}]}P_{0}^{[a_{3}]}+P_{0}^{[a_{1}]}P_{1}^{[a_{3}]}\right)
(-s-s^{-1})+P_{1}^{[a_{1}]}P_{1}^{[a_{3}]}(-s^{8}+\dots )\,.
\end{equation*}%
The case $V_{3}(a_{1},1,3,2)$ starts with the semistable pair $%
V_{3}(1,1,3,2)=-s^{17}+\dots $ and $V_{3}(2,1,3,2)=-s^{16}+\dots $ . The
line $(a_{1},1,a_{3},a_{4})$ is given by the expansion formula with 8 $J_*$%
-blocks.
\end{proof}

The missing cases $(2,1,a_3,a_4)$ and $(a_{1},1,2,a_4)$ can be reduced to
the previous list: $x_1^2x_2x_1^{a_3}x_2^{a_4}=x_1x_2^{a_3}x_1x_2^{a_4+1}%
\sim x_1^{a_3}x_2x_1^{a_4+1}x_2 $, $x_1^{a_1}x_2x_1^2x_2^{a_4}=
x_2x_1x_2^{a_1}x_1x_2^{a_4}\sim x_1^{a_1}x_2x_1^{a_4+1}x_2 $. If $a_1\geq 3$%
, $a_3\geq 4$, then $(a_{1},1,a_3,2)$ can be reduced, too: $%
x_1^{a_1}x_2x_1^{a_3}x_2^2=x_2x_1x_2^{a_1}x_1^{a_3-1}x_2^2\sim
x_1^3x_2x_1^{a_1}x_2^{a_3-1} $.

Now our purpose is to compute Jones polynomial of the braid $\alpha
(n)=x_{1}x_{2}x_{1}x_{2}...$ ($n$ factors); this sequence contains the
powers of $\Delta _3=\Delta $: $\alpha (3k)=\Delta ^{k}$. We will use the
next table where $X$ is the canonical form of $\alpha (n)$ (i.e. the
smallest word in the length-lexicografic order with $x_{1}<x_{2}$) and $Y$
is a conjugate of $X$, suitable for computations. The number of factors of
the six $Y$'s is $2k+2$.

\mbox{}

\begin{center}
\begin{tabular}{|l|l|l|}
\hline
$\,\,\,\,\,\,\,\alpha(n)$ & \,\,\,\,\,\,\,\,\,\,\,\,\,\,\,\,\,\,\,\,\,$X$ & 
\,\,\,\,\,\,\,\,\,\,\,\,\,\,\,\,\,\,\,\,\,$Y$ \\ \cline{1-3}
$\Delta^{2k}$ & $x_{1}^{2k}x_{2}x_{1}^{2}x_{2}^{2}\dots x_{1}^{2}x_{2}$ & $%
x_{1}^{2k}x_{2}x_{1}^{2}x_{2}^{2}\dots x_{1}^{2}x_{2}$ \\ 
$\Delta^{2k}x_{1}$ & $x_{1}^{2k+1}x_{2}x_{1}^{2}x_{2}^{2}\dots
x_{1}^{2}x_{2} $ & $x_{1}^{2k+1}x_{2}x_{1}^{2}x_{2}^{2}\dots x_{1}^{2}x_{2}$
\\ 
$\Delta^{2k}x_{1}x_{2}$ & $x_{1}^{2k+1}x_{2}x_{1}^{2}x_{2}^{2}\dots
x_{1}^{2}x_{2}^{2}$ & $x_{1}^{2k+1}x_{2}x_{1}^{2}x_{2}^{2}\dots
x_{1}^{2}x_{2}^{2}$ \\ 
$\Delta^{2k+1}$ & $x_{1}^{2k+1}x_{2}x_{1}^{2}x_{2}^{2}\dots x_{2}^{2}x_{1}$
& $x_{1}^{2k+2}x_{2}x_{1}^{2}x_{2}^{2}\dots x_{1}^{2}x_{2}^{2}$ \\ 
$\Delta^{2k+1}x_{2}$ & $x_{1}^{2k+2}x_{2}x_{1}^{2}x_{2}^{2}\dots
x_{2}^{2}x_{1}$ & $x_{1}^{2k+3}x_{2}x_{1}^{2}x_{2}^{2}\dots
x_{1}^{2}x_{2}^{2}$ \\ 
$\Delta^{2k+1}x_{2}x_{1}$ & $x_{1}^{2k+2}x_{2}x_{1}^{2}x_{2}^{2}\dots
x_{2}^{2}x_{1}^{2}$ & $x_{1}^{2k+4}x_{2}x_{1}^{2}x_{2}^{2}\dots
x_{1}^{2}x_{2}^{2}$ \\ \hline
\end{tabular}
\end{center}

\mbox{}

\medskip \noindent In order to simplify the notation we denote by $%
V_{\Delta}(n)=V_{3}(\alpha(n))$ Jones polynomial of the closure of $3$-braid 
$x_{1}x_{2}x_{1}x_{2}\dots$ ($n$ factors).

\begin{proposition}
\label{prop4.17} The Jones polynomials $V_{\Delta }(n)$ satisfy the
following recurrence relations: 
\begin{equation*}
\begin{array}{ll}
V_{\Delta }(2k+1) & =(s^{3}-s)V_{\Delta }(2k)+s^{4}V_{\Delta }(2k-1) \\ 
V_{\Delta }(6k+4) & =(s^{3}-s)V_{\Delta }(6k+3)+s^{4}V_{\Delta }(6k+2) \\ 
V_{\Delta }(6k+2) & =(s^{3}-s)V_{\Delta }(6k+1)+(s^{7}-s^{5})V_{\Delta
}(6k-1)+s^{8}V_{\Delta }(6k-2) \\ 
V_{\Delta }(6k) & =(s^{3}-s)[V_{\Delta }(6k-1)+s^{4}V_{\Delta
}(6k-3)+s^{8}V_{\Delta }(6k-5)]+ \\ 
& \,\,\,\,\,\,+\,s^{12}V_{\Delta }(6k-6).%
\end{array}%
\end{equation*}
\end{proposition}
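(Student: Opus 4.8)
The plan is to derive all four relations from the single two-term recurrence of Theorem~\ref{thm1.1}, by varying one exponent at a time inside the conjugate words recorded in the table, together with cyclic (Markov) rearrangements and the flip automorphism $\phi$ of $\mathcal{B}_3$ given by conjugation with $\Delta=x_1x_2x_1$, which interchanges $x_1\leftrightarrow x_2$ and hence preserves the Jones polynomial of a closure. Denote by $Y_n$ the conjugate word listed for $\alpha(n)$ in the table. The device that makes the longer relations work is the following: for a fixed word $W$ the polynomials $e\mapsto V_3(x_1^{e}W)$ satisfy $V(e+2)=(s^3-s)V(e+1)+s^4V(e)$ for every $e\in\mathbb{Z}$, so once two consecutive members are identified with values of $V_\Delta$ we may read off all the others, even at exponents lying outside the range in which $x_1^{e}W$ is literally one of the words $\alpha(n)$.

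For the first relation I would take $U=x_2x_1x_2\cdots x_2$ with $2k-1$ letters and the family $\delta(e)=x_1^{e}U$. Here $\delta(1)=x_1U=\alpha(2k)$; cyclically $\widehat{x_1^{2}U}=\widehat{x_1\alpha(2k)}=\widehat{\alpha(2k+1)}$; and $U=\phi(\alpha(2k-1))$, so $V_3(U)=V_\Delta(2k-1)$. Theorem~\ref{thm1.1} applied to this family at $e=0$ then gives exactly $V_\Delta(2k+1)=(s^3-s)V_\Delta(2k)+s^4V_\Delta(2k-1)$. The second relation is even more direct: the rows $\Delta^{2k}x_1x_2$, $\Delta^{2k+1}$, $\Delta^{2k+1}x_2$ have $Y$-words $x_1^{m}\,x_2x_1^2x_2^2\cdots x_1^2x_2^2$ with the same tail and $m=2k+1,2k+2,2k+3$, so Theorem~\ref{thm1.1} applied to the leading exponent of this common family yields $V_\Delta(6k+4)=(s^3-s)V_\Delta(6k+3)+s^4V_\Delta(6k+2)$.

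The third and fourth relations come from the same idea applied to a trailing block, followed by a cyclic move that re-enters the previous block and an application of the extrapolation principle above. For the third, write $Y_{6k+2}=x_1^{2k+1}Wx_1^2x_2^2$ with $W=x_2(x_1^2x_2^2)^{k-1}$ and reduce the final $x_2$-exponent by Theorem~\ref{thm1.1}: the middle term is $x_1^{2k+1}Wx_1^2x_2=Y_{6k+1}$, while the bottom term $x_1^{2k+1}Wx_1^2$ becomes, after moving the trailing $x_1^2$ to the front, the word $x_1^{2k+3}W$. This is the block-$(k-1)$ family $x_1^{e}W$ at $e=2k+3$, one step above its two identified members $V_3(x_1^{2k+1}W)=V_\Delta(6k-2)$ and $V_3(x_1^{2k+2}W)=V_\Delta(6k-1)$; the recurrence then gives $V_3(x_1^{2k+3}W)=(s^3-s)V_\Delta(6k-1)+s^4V_\Delta(6k-2)$, and substituting back produces the stated coefficients $(s^3-s)$, $(s^7-s^5)$, $s^8$. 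The fourth relation is this construction iterated once more: starting from $Y_{6k}=x_1^{2k}x_2(x_1^2x_2^2)^{k-1}x_1^2x_2$ one cyclically exposes a trailing $x_1^2$, reduces it to split off the $(s^3-s)V_\Delta(6k-1)$ term, and then the remaining closed braid is itself reduced by a second trailing-block step (after merging the two cyclically adjacent $x_2$-syllables), which yields the three-term bracket and finally the lone $s^{12}V_\Delta(6k-6)$ from the deepest level.

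The routine parts are the substitutions into Theorem~\ref{thm1.1} and the coefficient bookkeeping, which produce the powers $s^{4}$, $s^{8}$, $s^{12}$ automatically from iterating the factor $s^4$. The delicate part, which I would write out most carefully, is the set of block-boundary identifications: verifying by explicit Artin relations and Markov moves that each cyclically rearranged word (for instance $x_1^{2k+1}Wx_1^2\sim x_1^{2k+3}W$, or the merging of adjacent $x_2$-powers in the fourth relation) really has the asserted closure, and that after each merge the number of syllables drops exactly as required so that the extrapolated family is the correct lower block. Pinning down how many reduction steps each term needs, and checking the small-$k$ base cases (so that empty products $(x_1^2x_2^2)^{k-1}$ are handled correctly), is where the main effort lies; everything else is a mechanical consequence of the two-term recurrence.
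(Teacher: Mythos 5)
Your proposal is correct and takes essentially the same route as the paper: both derive each relation by applying Theorem~\ref{thm1.1} one syllable at a time to the conjugate words $Y$ from the table, using cyclic (Markov) moves to identify the intermediate braids with lower-index $Y$-words, with the third and fourth relations obtained from two and three nested applications of the recurrence exactly as you describe. The only cosmetic difference is in the first relation, where you use the flip $x_{1}\leftrightarrow x_{2}$ and a leading-exponent family $x_{1}^{e}U$, while the paper conjugates $\alpha(2k+1)\sim x_{1}x_{2}\cdots x_{1}x_{2}^{2}$ and varies the trailing exponent.
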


\begin{proof}
The proof is by induction. For the first relation, the case $6k+5$ is given
by the basic recurrence relation and the table. We give a general proof,
using a different idea: $\alpha (2k-1)=x_{1}x_{2}\dots x_{1}$ ($2k-1$
factors), $\alpha (2k)=x_{1}x_{2}\dots x_{2}$ ($2k$ factors), $\alpha
(2k+1)=x_{1}x_{2}\dots x_{1}x_{2}x_{1}\sim x_{2}x_{1}\dots x_{1}x_{2}\sim
x_{1}x_{2}\dots x_{1}x_{2}^{2}$ and now we can apply the basic recurrence
relation.

$V_{\Delta }(6k+4)$ is also given by the basic recurrence relation with $%
V_3(x_{1}^{2k+3}x_{2}x_{1}^{2}\dots )$, $V_3(x_{1}^{2k+2}x_{2}x_{1}^{2}\dots
)$ and $V_3(x_{1}^{2k+1}x_{2}x_{1}^{2}\dots )$. For the last two relation we
have to apply Theorem~\ref{thm1.1} two or three times.

To compute $V_{\Delta }(6k+2)$ we use the basic recurrence relation twice:
first, the recurrence relation among $\alpha
(6k+2)=x_{1}^{2k+1}x_{2}x_{1}^{2}\dots x_{1}^{2}x_{2}^{2}$, $\alpha
(6k+1)=x_{1}^{2k+1}x_{2}x_{1}^{2}\dots x_{1}^{2}x_{2}^{1}$ and $%
x_{1}^{2k+1}x_{2}x_{1}^{2}\dots x_{2}^{2}x_{1}^{2}x_{2}^{0}$ which is
conjugate to $x_{1}^{2k+3}x_{2}x_{1}^{2}\dots x_{2}^{2}=\beta $. The new
braid $\beta $, $\alpha (6k-1)\sim x_{1}^{2k+2}x_{2}x_{1}^{2}\dots x_{2}^{2} 
$ and $\alpha (6k-2)\sim x_{1}^{2k+1}x_{2}x_{1}^{2}\dots x_{2}^{2}$ are
related by the basic recurrence relation.

Finally, to verify the last formula we start with $\alpha (6k)\sim
x_{1}^{2k}x_{2}x_{1}^{2}\dots x_{1}^{2}x_{2}^{1}$. Changing the under- and
over-crossing status in the second last crossing we get the braid $%
x_{1}^{2k}x_{2}x_{1}^{2}x_{2}^{2}\dots x_{1}^{1}x_{2}^{1}\sim
x_{1}^{2k+2}x_{2}x_{1}^{2}x_{2}^{2}\dots x_{1}^{2}x_{2}^{2}$ whose Jones
polynomial is $V_{\Delta}(6k-1)$. Destroying the same crossing we obtain the
braid word $x_{1}^{2k}x_{2}x_{1}^{2}x_{2}^{2}\dots x_{1}^{2}x_{2}^{3}$ with $%
2k$ letters. We denote it by $\gamma $ and we obtain $V_{\Delta
}(6k)\,=\,(s^{3}-s)V_{\Delta }(6k-1)+s^{4}V_3(\gamma ) $. To compute $%
V_3(\gamma)$, we interchange the over- and under-crossing status of the last
crossing of $\gamma$ and obtain $x_{1}^{2k}x_{2}x_{1}^{2}x_{2}^{2}\dots
x_{1}^{2}x_{2}^{2}$ whose Jones polynomial is $V_{\Delta }(6k-3)$. The
elimination of last crossing of $\gamma$ gives $%
x_{1}^{2k}x_{2}x_{1}^{2}x_{2}^{2}\dots x_{1}^{2}x_{2}^{1}$ which we denote
by $\delta$. Hence $V_3(\gamma)=(s^3-s)V_{\Delta }(6k-3)+s^4V_3(\delta) $.
The smoothing of first crossing of $\delta$ ultimately gives the relation $%
V_3(\delta)=(s^3-s)V_{\Delta }(6k-5)+s^4 V_{\Delta }(6k-6)$ and the result
follows.
\end{proof}

\begin{lemma}
\label{lem4.19} The first Jones polynomials are given by:\newline
\begin{equation*}
\begin{array}{ll}
V_{\Delta }(0) & =s^{2}+2+s^{-2} \\ 
V_{\Delta }(1) & =-s-s^{-1} \\ 
V_{\Delta }(2) & =1 \\ 
V_{\Delta }(3) & =-s^{5}-s \\ 
V_{\Delta }(4) & =-s^{8}+s^{6}+s^{2} \\ 
V_{\Delta }(5) & =-s^{11}+s^{9}-s^{7}-s^{3}.%
\end{array}%
\end{equation*}
\end{lemma}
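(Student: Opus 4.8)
The plan is to reduce every value in the list to data already computed for the two-strand braid group, rather than to manipulate three-strand diagrams directly. The key observation is that for the small range $0\le n\le 5$ the braid $\alpha(n)=x_1x_2x_1x_2\cdots$ ($n$ factors) is Markov-equivalent to a two-syllable word $x_1x_2^{n-1}$, so that Corollary~\ref{prop4.3} applies and collapses the computation to $V_2$. The single value $V_\Delta(0)$ must be treated apart, since it is the closure of the empty braid.

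Concretely, I would first record $V_\Delta(0)$: the closure of the identity in $\mathcal{B}_3$ is the three-component unlink, whose Jones polynomial is $(-s-s^{-1})^2=s^2+2+s^{-2}$. For $1\le n\le 5$ I would then establish the conjugacies $\alpha(n)\sim x_1x_2^{n-1}$ one at a time, using the braid relation $x_1x_2x_1=x_2x_1x_2$ together with cyclic (Markov) conjugation; for instance $x_1x_2x_1=x_2x_1x_2\sim x_1x_2^2$ and $x_1x_2x_1x_2=x_2x_1x_2^2\sim x_1x_2^3$, with the case $n=5$ handled by one more pair of such moves. Once $\alpha(n)\sim x_1x_2^{n-1}$ is known, Corollary~\ref{prop4.3} gives $V_\Delta(n)=V_2(x_1)V_2(x_1^{n-1})=V_2(n-1)$, since $V_2(x_1)=1$. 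The values $V_2(0),V_2(1),\dots,V_2(4)$ are then read off from Proposition~\ref{prop4.1} (or, equally fast, generated from $V_2(0)=-s-s^{-1}$ and $V_2(1)=1$ by the basic recurrence of Theorem~\ref{thm1.1}), and they match the claimed $V_\Delta(1),\dots,V_\Delta(5)$ exactly.

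An equally valid and probably intended route is to take $V_\Delta(0),V_\Delta(1),V_\Delta(2)$ as the seeds and to generate $V_\Delta(3),V_\Delta(4),V_\Delta(5)$ by three substitutions into the first two recurrences of Proposition~\ref{prop4.17} (the first relation with $k=1$ and $k=2$, the second relation with $k=0$). Either way there is no genuine conceptual obstacle; the work is pure bookkeeping, and the only points demanding care are keeping the chain of braid relations and conjugations correct and treating $V_\Delta(0)$ on its own. It is worth noting that the reduction $\alpha(n)\sim x_1x_2^{n-1}$ is exactly what fails at $n=6$, where $\alpha(6)=\Delta^2$ is central and its closure has three components; this is the structural reason the lemma stops at $n=5$.
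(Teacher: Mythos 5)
Your proposal is correct, but your primary route differs from the paper's proof. The paper takes the first three values as seeds ($V_{\Delta}(0),V_{\Delta}(1),V_{\Delta}(2)$ being closures of the trivial braid, $x_1$, and $x_1x_2$, i.e.\ the unlinks of $3$, $2$, $1$ components) and then obtains $V_{\Delta}(3),V_{\Delta}(4),V_{\Delta}(5)$ by three substitutions into the recurrences of Proposition~\ref{prop4.17} --- exactly your ``equally valid and probably intended route.'' Your main argument instead establishes the conjugacies $\alpha(n)\sim x_1x_2^{n-1}$ for $1\le n\le 5$ and invokes Corollary~\ref{prop4.3} to collapse everything to $V_{\Delta}(n)=V_2(x_1)V_2(x_1^{n-1})=V_2(n-1)$, with $V_{\Delta}(0)$ handled separately as the three-component unlink; I checked the conjugacy chain (e.g.\ $\alpha(5)=x_1x_2^2x_1x_2\sim x_2^3x_1x_2\sim x_1x_2^4$) and the resulting values, and they all match. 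Your route buys a structural insight the paper's bookkeeping does not: it identifies these closures as the links $\widehat{x_1^{n-1}}\in\mathcal{B}_2$ already tabulated in Proposition~\ref{prop4.1}, and your closing remark correctly explains why the reduction (and the lemma's list) must stop at $n=5$, since $\alpha(6)=\Delta^2$ closes to a three-component link while $\widehat{x_1x_2^5}$ is a knot. It is also independent of Proposition~\ref{prop4.17}, so it can serve as a consistency check on those recurrences. What the paper's route buys in exchange is uniformity: the lemma is precisely the induction base feeding Proposition~\ref{prop4.17} to prove Proposition~\ref{prop4.20}, so deriving it from those same recurrences keeps the whole section running on one mechanism. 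Neither route is circular, since Proposition~\ref{prop4.1}, Corollary~\ref{prop4.3}, and Proposition~\ref{prop4.17} are all established before this lemma.
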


\begin{proof}
First three are obvious; next use Proposition~\ref{prop4.17} for $V_{\Delta
}(3)$, $V_{\Delta }(4)$ and $V_{\Delta }(5)$.
\end{proof}

\begin{remark}
\label{rem4.18} The recurrence relations of Proposition \ref{prop4.17}~ give
the following recurrence matrix: 
\begin{equation*}
\left( 
\begin{array}{l}
V_{\Delta }(6k) \\ 
V_{\Delta }(6k+1) \\ 
\vdots \\ 
V_{\Delta }(6k+5) \\ 
\end{array}%
\right) =A(k)\left( 
\begin{array}{c}
V_{\Delta }(6k-6) \\ 
V_{\Delta }(6k-5) \\ 
\vdots \\ 
V_{\Delta }(6k-1) \\ 
\end{array}%
\right) \,,
\end{equation*}%
where the Jordan normal form of $A(k)$ has a nice structure:

\begin{equation*}
A(k)\sim\left( 
\begin{array}{cccc}
J_{3} & 0 & 0 & 0 \\ 
0 & s^{6} & 0 & 0 \\ 
0 & 0 & s^{12} & 0 \\ 
0 & 0 & 0 & s^{18}%
\end{array}
\right), \quad \quad J_{3}=\left( 
\begin{array}{ccc}
0 & 1 & 0 \\ 
0 & 0 & 1 \\ 
0 & 0 & 0%
\end{array}
\right).
\end{equation*}

This fact is reflected in the following very simple general formulae of
Jones polynomials $V_{\Delta }(n)$.
\end{remark}

\begin{proposition}
\label{prop4.20} For any $k\geq0$ Jones polynomials of $%
\alpha(n)=x_{1}x_{2}x_{1}\dots(n$-$times)$ are given by:

\begin{equation*}
\begin{array}{ll}
V_{\Delta }(6k) & =2s^{12k}+s^{6k+2}+s^{6k-2} \\ 
V_{\Delta }(6k+1) & =s^{12k+3}-s^{12k+1}-s^{6k+3}-s^{6k-1} \\ 
V_{\Delta }(6k+2) & =-s^{12k+4}+s^{6k+4}+s^{6k} \\ 
V_{\Delta }(6k+3) & =-s^{6k+5}-s^{6k+1} \\ 
V_{\Delta }(6k+4) & =-s^{12k+8}+s^{6k+6}+s^{6k+2} \\ 
V_{\Delta }(6k+5) & =-s^{12k+11}+s^{6k+9}-s^{6k+7}-s^{6k+3}.%
\end{array}%
\end{equation*}
\end{proposition}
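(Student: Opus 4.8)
The plan is to prove all six identities simultaneously by induction on $k$, viewing the six residue classes modulo $6$ as the coordinates of the vector $(V_\Delta(6k),\dots,V_\Delta(6k+5))$ and using the relations of Proposition~\ref{prop4.17} to pass from block $k-1$ to block $k$. The base case $k=0$ is exactly Lemma~\ref{lem4.19}: one substitutes $k=0$ into the six closed forms and checks that they reproduce $V_\Delta(0),\dots,V_\Delta(5)$. Here I would sound one note of caution that turns out to matter: at $k=0$ several of the exponents collapse onto one another, so the base case alone does not pin down the general shape of the answer. Each exponent must be recorded as a linear function of $k$ and not read off from the $k=0$ block, where coincidences such as $12k+9=6k+9$ (both equal $9$) hide the true $k$-dependence. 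This is precisely the spot where an exponent slip is easiest to make, and the $(6k+5)$-row is the one to scrutinize.

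For the inductive step I would run the four recurrences as a deterministic cascade inside each block. First compute $V_\Delta(6k)$ from the fourth relation of Proposition~\ref{prop4.17}, which involves only the previous-block values $V_\Delta(6k-1)$, $V_\Delta(6k-3)$, $V_\Delta(6k-5)$, $V_\Delta(6k-6)$. Then, using the odd recurrence $V_\Delta(2m+1)=(s^3-s)V_\Delta(2m)+s^4V_\Delta(2m-1)$, obtain $V_\Delta(6k+1)$ from $V_\Delta(6k)$ and $V_\Delta(6k-1)$. Next get $V_\Delta(6k+2)$ from the third relation (inputs $V_\Delta(6k+1)$, $V_\Delta(6k-1)$, $V_\Delta(6k-2)$), then $V_\Delta(6k+3)$ from the odd recurrence, then $V_\Delta(6k+4)$ from the second relation, and finally $V_\Delta(6k+5)$ from the odd recurrence. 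At each stage one substitutes the already-established closed forms and collects powers of $s$; every dependency is available when it is needed, so the induction closes. As an internal cross-check, the $6k$ and $6k+3$ cases recover Proposition~\ref{prop1.5}, since $V_\Delta(6k)=V_3(\Delta_3^{2k})$ and $V_\Delta(6k+3)=V_3(\Delta_3^{2k+1})$.

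The main obstacle is purely the polynomial bookkeeping, but two features make it nontrivial. The entry point $V_\Delta(6k)$ is the delicate one: its leading coefficient $2$ in $2s^{12k}$ is not a ``leading term'' in any single summand but arises from a cancellation in which $(s^3-s)$ applied to the bracket contributes $-s^{12k}$ while $s^{12}V_\Delta(6k-6)$ contributes $+2s^{12k}$, and many interior monomials likewise cancel in pairs. Verifying these cancellations in full, rather than merely matching top-degree terms, is where the real work lies. The safeguard is to carry each summand as an explicit sum of monomials $s^{12k+c}$ and $s^{6k+c}$ and to watch for the coincidences flagged above.

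Finally, Remark~\ref{rem4.18} explains the shape of the answer and can be used to guide (and sanity-check) the computation. Because the coefficients in Proposition~\ref{prop4.17} are independent of $k$, the block-to-block map is a fixed $6\times 6$ matrix; its eigenvalues $s^6,s^{12},s^{18}$ account for the exponents $6k+c$ and $12k+c$ appearing in the formulas, while the nilpotent $J_3$ block accounts for the transient behaviour at small $k$. The absence of any $s^{18k}$ term then reflects that the initial block has no component along the $s^{18}$-eigenvector. One could alternatively derive the six closed forms by computing a Jordan basis for this matrix, but that route demands explicit eigenvector computations; the cascade induction above is shorter and self-contained once Lemma~\ref{lem4.19} supplies the initial block.
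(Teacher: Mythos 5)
Your plan is the same as the paper's own (very terse) proof: induction on $k$, base case Lemma~\ref{lem4.19}, inductive step checked against the recurrences of Proposition~\ref{prop4.17}; your cascade is the right way to organize that step, and the dependencies do line up as you say. However, your assertion that ``the induction closes'' is false, and it fails at exactly the row you flagged but did not actually check. The induction correctly reproduces the rows for $6k+3$ and $6k+4$, and then the odd recurrence \emph{forces}
\begin{align*}
V_{\Delta }(6k+5) &=(s^{3}-s)\left( -s^{12k+8}+s^{6k+6}+s^{6k+2}\right)
+s^{4}\left( -s^{6k+5}-s^{6k+1}\right) \\
&=-s^{12k+11}+s^{12k+9}-s^{6k+7}-s^{6k+3},
\end{align*}
because the $s^{6k+9}$ terms cancel in pairs; this is \emph{not} the printed row $-s^{12k+11}+s^{6k+9}-s^{6k+7}-s^{6k+3}$, and the two agree only at $k=0$, where $12k+9=6k+9$. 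So the proposition as stated is false for $k\geq 1$, and no amount of bookkeeping can close the induction on it. Concretely, $V_{\Delta }(11)=(s^{3}-s)V_{\Delta }(10)+s^{4}V_{\Delta }(9)=(s^{3}-s)(-s^{20}+s^{12}+s^{8})+s^{4}(-s^{11}-s^{7})=-s^{23}+s^{21}-s^{13}-s^{9}$, not $-s^{23}+s^{15}-s^{13}-s^{9}$. One can confirm this value without Proposition~\ref{prop4.17} at all, by reducing the conjugate word $x_{1}^{6}x_{2}x_{1}^{2}x_{2}^{2}\sim \alpha (11)$ with Theorem~\ref{thm1.1}, Markov moves and the $V_{2}$ values of Proposition~\ref{prop4.1}; and as a cross-check, feeding $-s^{23}+s^{21}-s^{13}-s^{9}$ into the fourth recurrence returns $V_{\Delta }(12)=2s^{24}+s^{14}+s^{10}$, in agreement with Proposition~\ref{prop1.5}, whereas feeding in the printed value does not.

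The gap in your proposal is therefore not the strategy but the execution: you identified the dangerous spot (the $(6k+5)$-row and precisely the coincidence $12k+9=6k+9$ at $k=0$) and then asserted the conclusion instead of performing the one computation that matters. Performing it shows that the statement itself carries a misprint (the exponent $6k+9$ should be $12k+9$), that the induction you describe proves the corrected formula, and that the paper's one-sentence proof suffers from the same defect, since the check it appeals to fails for the printed last row. Everything else in your plan --- the ordering of the cascade, the warning that the coefficient $2$ in $2s^{12k}$ arises only after cancellation, the eigenvalue heuristics from Remark~\ref{rem4.18} --- is sound, and with the last row corrected your induction does close.
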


\begin{proof}
The proof is by induction: the case $k=0$ is covered by Lemma~\ref{lem4.19}
and induction step can be checked with Proposition~\ref{prop4.17}.
\end{proof}

Now we analyze Jones polynomial of general positive $3$-braids $\beta
^{A_{\ast }}=x_{1}^{a_{1}}x_{2}^{a_{2}}\dots x_{2}^{a_{2L}},$ where $A_{\ast
}=(a_{1},...,a_{2L})$, $a_{i}\geq 0$; we use the short notations $%
D=\dsum\limits_{i=1}^{2L}a_{i}$, the total degree of $\beta ^{A_*} $, $Z=%
\mathrm{card}\{i|1\leq i\leq 2L,a_{i}=0\}$. As an example, $J_{\ast
}^{1,0}=(1,0,\ldots ,1,0)$ and $J_{\ast }^{0,1}=(0,1,\ldots ,0,1)$ have the
same total sum and number of zeros $J^{1,0}=J^{1,0}=L=Z$. For a positive
sequence $A_{\ast }$ and $J_{\ast }\in \{ 0,1\} ^{2L}$, the $J_{\ast }$%
-block is the Laurent polynomial in $s$ $\ P_{J_{\ast }}^{A_{\ast }}V_{
J_{\ast }}=P_{j_{1}}^{[a_{1}]}P_{j_{2}}^{[a_{2}]}\dots
P_{j_{2L}}^{[a_{2L}]}V_3(\beta ^ {J_{\ast }})$.

\begin{lemma}
\label{lem4.26} If all $a_{i}$ are positive, the inequality holds: 
\begin{equation*}
\deg P_{J_{\ast }}^{A_{\ast }}\leq 3D-J,
\end{equation*}%
with equality if and only if$\ $ $j_{i}=0$ implies $a_{i}$ $\geq 2$.
\end{lemma}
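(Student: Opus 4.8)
The plan is to reduce the estimate to a direct computation of the degrees of the individual factors $P_{j_i}^{[a_i]}(s)$ and then exploit additivity of degree under multiplication. First I would record the degree of each basic polynomial. For $a\geq 1$ we have $P_1^{[a]}(s)=s^{3a-1}+(-1)^{a+1}s^{a-1}$, whose leading term is $s^{3a-1}$ with coefficient $+1$, so $\deg P_1^{[a]}=3a-1$; and for $a\geq 2$ we have $P_0^{[a]}(s)=s^{3a}+(-1)^{a}s^{a+2}$ with leading term $s^{3a}$ and coefficient $+1$, so $\deg P_0^{[a]}=3a$. The crucial degenerate observation is that $P_0^{[1]}(s)=s^{3}-s^{3}=0$, so a factor $P_{j_i}^{[a_i]}$ vanishes precisely when $j_i=0$ and $a_i=1$.

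Next I would split into two cases according to whether any factor vanishes. In the generic case, where $j_i=0$ implies $a_i\geq 2$, no factor is zero, and since every leading coefficient equals $+1$ there is no cancellation in the product; degree is therefore additive and
$$\deg P_{J_{\ast}}^{A_{\ast}}=\sum_{i}\deg P_{j_i}^{[a_i]}=\sum_{j_i=1}(3a_i-1)+\sum_{j_i=0}3a_i=3D-J,$$
which gives equality (and incidentally that the leading coefficient of the product is $+1$). In the complementary case there is some index $i$ with $j_i=0$ and $a_i=1$, whence $P_{j_i}^{[a_i]}=0$ and so $P_{J_{\ast}}^{A_{\ast}}=0$; by the convention $\deg(0)\leq 0$ this yields $\deg P_{J_{\ast}}^{A_{\ast}}\leq 0<3D-J$. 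The strict inequality uses that $D\geq 2L\geq J$ forces $3D-J\geq 2D>0$, so here the inequality holds strictly and equality fails.

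Combining the two cases proves both the bound $\deg P_{J_{\ast}}^{A_{\ast}}\leq 3D-J$ and the stated characterization of equality. The only delicate point, and the one I would flag as the main obstacle, is the degenerate vanishing $P_0^{[1]}=0$: it is exactly what breaks the naive degree count $3D-J$ and is responsible for the ``if and only if'' clause, so the argument must isolate it rather than summing degrees indiscriminately.
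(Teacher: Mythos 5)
Your proof is correct and follows essentially the same route as the paper: the paper's one-line proof records exactly the degree facts you state ($\deg P_{0}^{[1]}\leq 0$, $\deg P_{0}^{[a]}=3a$ for $a\geq 2$, $\deg P_{1}^{[a]}=3a-1$ for $a\geq 1$) and leaves the additivity of degree and the case split implicit. Your writeup merely makes explicit the degenerate vanishing $P_{0}^{[1]}=0$ and the resulting strict inequality, which is exactly the content the paper's terse argument relies on.
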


\begin{proof}
By definition, $\deg P_{0}^{[1]}\leq 0,$ $\deg P_{0}^{[a]}=3a$ if $a\geq 2,\ 
$and $\deg P_{1}^{[a]}=3a-1$ if $a\geq 1$.
\end{proof}

We start the proof of Theorem~\ref{thm1.6} with a more general result:

\begin{theorem}
\label{thm77} If the positive 3-braid $\beta ^{A_{\ast
}}=x_{1}^{a_{1}}x_{2}^{a_{2}}\cdots x_{1}^{a_{2L-1}}x_{2}^{a_{2L}}$ has
degree $D$\ and number of zeros $Z,$ then the inequality holds: 
\begin{equation*}
\deg V_{3}(\beta ^{A_{\ast }})\leq 3D-2L+2Z.
\end{equation*}
\end{theorem}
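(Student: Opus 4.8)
The plan is to prove the degree bound $\deg V_3(\beta^{A_*}) \leq 3D - 2L + 2Z$ by applying the expansion formula of Theorem~\ref{thm1.2} and estimating the degree of each $J_*$-block separately. Since the Jones polynomial is a sum of $2^{2L}$ blocks divided by $(s^2+1)^{2L}$, and since the denominator $(s^2+1)^{2L}$ has degree exactly $4L$, it suffices to bound $\deg\bigl(\sum_{J_*} P_{J_*}^{A_*} V_{J_*}\bigr)$ by $3D - 2L + 2Z + 4L = 3D + 2L + 2Z$; then dividing by the denominator yields the claimed bound. So the first step I would carry out is to reduce the theorem to a degree estimate on the numerator polynomial $\sum_{J_*\in\{0,1\}^{2L}} P_{J_*}^{A_*} V_3(\beta^{J_*})$.

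Let me analyze the problem. First I would handle the case where all $a_i$ are positive (so $Z=0$) and then address zeros separately, since the number of zeros $Z$ enters the bound. For the all-positive case, Lemma~\ref{lem4.26} already gives $\deg P_{J_*}^{A_*} \leq 3D - J$, where $J = \sum j_i$ is the number of ones in $J_*$. I would combine this with a bound on $\deg V_3(\beta^{J_*})$, where $\beta^{J_*} = x_1^{j_1} x_2^{j_2}\cdots x_2^{j_{2L}}$ is a braid word with exponents in $\{0,1\}$. The key observation is that such a braid, after deleting the factors with $j_i=0$ and concatenating, reduces to a positive braid whose degree is governed by its number of generators; the relevant estimate is the one recorded in Remark~\ref{rem2.3}b, namely $V_n(\beta_I) = (-s-s^{-1})^{n-k-1}$ for Markov square-free braids, together with the elementary degree growth rule for the skein relation. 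So the core step is to establish an inequality of the form $\deg V_3(\beta^{J_*}) \leq J + (\text{correction})$, combine it with Lemma~\ref{lem4.26}, and verify that the worst-case block across all $J_*\in\{0,1\}^{2L}$ yields total degree at most $3D + 2L$ when $Z=0$.

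I expect the main obstacle to lie in bounding $\deg V_3(\beta^{J_*})$ uniformly over all $J_*$ in a way that interacts correctly with the $-J$ term from Lemma~\ref{lem4.26}, so that the sum $\deg P_{J_*}^{A_*} + \deg V_3(\beta^{J_*})$ is maximized at a controllable $J_*$. The delicate point is that the blocks with more ones (larger $J$) incur a larger penalty $-J$ in the $P$-factor but potentially a larger $V_3(\beta^{J_*})$, and the tension between these must be resolved to pin down $3D - 2L + 2Z$ as the overall maximum. The most natural route is to prove directly that for each $J_*$ one has $\deg P_{J_*}^{A_*} + \deg V_3(\beta^{J_*}) \leq 3D + 2L + 2Z$, by splitting into the case $j_i=0$ (contributing $3a_i$ when $a_i\geq 2$, hence no loss, but a loss of $2$ relative to $3a_i$ when $a_i=1$, which is where the $+2Z$ slack should be absorbed) versus $j_i=1$ (contributing $3a_i - 1$ to the $P$-degree and at most $+1$ to the exponent count in $V_3$, so contributing net $3a_i$).

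To incorporate the zeros, I would note that when some $a_i = 0$ the factor $x_{i_\ell}^{a_i}$ is trivial, so $P_0^{[0]} = 1 + 1 = 2$ and $P_1^{[0]} = s^{-1} - s^{-1} = 0$ (checking the definitions $P_0^{[a]} = s^{3a}+(-1)^a s^{a+2}$ and $P_1^{[a]} = s^{3a-1}+(-1)^{a+1}s^{a-1}$ at $a=0$); this forces $j_i = 0$ for every position with $a_i=0$, killing half the blocks and introducing a factor $2$ of degree $0$. Each such zero therefore contributes $0$ to the $P$-degree while the bound $3D - 2L$ would have allocated $3\cdot 0 = 0$ anyway, so the accounting must be checked to confirm that each zero shifts the attainable degree by exactly $+2$, producing the $+2Z$ term. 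The cleanest way to finish is an induction on $Z$: reduce to the $Z=0$ case by absorbing each zero into the adjacent nonzero factor via the braid relations (as done in the reductions following Proposition~\ref{prop4.5}), tracking how $D$, $L$ and the degree bound transform at each step. The verification that these reductions preserve the inequality with the correct arithmetic is the routine but essential bookkeeping I would defer to the detailed proof.
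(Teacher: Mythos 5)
Your high-level architecture does match the paper's: reduce the zero exponents away by deletion and concatenation (this is exactly the paper's Lemma~\ref{lem4.28}, which derives Theorem~\ref{thm77} for $L$ factors from Theorem~\ref{thm77} for $L-1$ factors plus Theorem~\ref{thm1.6}a for $L$ factors), then treat the all-positive case through the expansion formula of Theorem~\ref{thm1.2}, bounding each $J_*$-block via Lemma~\ref{lem4.26}. But the heart of the proof is missing. After Lemma~\ref{lem4.26} gives $\deg P_{J_*}^{A_*}\leq 3D-J$, one still must prove
\begin{equation*}
\deg V_3(\beta^{J_*})\leq J+2L \quad\text{for every } J_*\in\{0,1\}^{2L},
\end{equation*}
and your per-position accounting cannot deliver this, because the bound is not decomposable position by position. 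The alternating blocks $J_*^{1,0},J_*^{0,1}$ have $V_3(\beta^{J_*})=V_3(\widehat{x_1^{L}})$ of degree $3L$ (three per one, not one), and the all-ones block has $V_3(\beta^{J_*})=V_{\Delta}(2L)$ of degree $4L$ (two per one); both attain $J+2L$ exactly, so both blocks have total degree exactly $3D+2L$, refuting any scheme in which each position contributes "net $3a_i$" (total $\leq 3D$). Handling these extremal blocks requires the explicit formulas of Proposition~\ref{prop4.1}/Corollary~\ref{prop4.3} (torus links) and Proposition~\ref{prop4.20} (powers of $x_1x_2$, which itself rests on the recurrences of Proposition~\ref{prop4.17}); for intermediate blocks ($L+1\leq J\leq 2L-1$) the paper performs a delete-zeros-and-concatenate count and invokes Theorem~\ref{thm1.6}a inductively, and for sparse blocks ($J\leq L$, non-alternating) it invokes Theorem~\ref{thm77} at $L-1$. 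In other words, the two statements are proved by a genuine simultaneous induction on $L$, which your proposal never sets up. The appeal to Remark~\ref{rem2.3}b (Markov square-free reduction) is not a substitute: it gives no control of degrees through the iterated expansions, and tracking that is precisely the difficulty.

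Two concrete computational errors feed this gap. First, $P_0^{[0]}=s^0+(-1)^0s^{0+2}=1+s^2$, of degree $2$, not $1+1=2$ of degree $0$; it is exactly these degree-$2$ factors at zero positions (each cancelling one factor of the denominator $(s^2+1)^{2L}$) that would produce the $+2Z$ in a direct block-by-block accounting, so your assertion that zeros "contribute $0$ to the $P$-degree" undercounts and then forces you to posit an unexplained shift of $+2$ per zero. Second, $Z$ counts positions with $a_i=0$, not $a_i=1$: positions with $j_i=0$ and $a_i=1$ require no slack whatsoever, since $P_0^{[1]}=s^3-s^3=0$ and those blocks vanish identically. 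What remains correct in your proposal (the numerator reduction, the vanishing of blocks with $j_i=1$ over $a_i=0$, and the $Z$-induction by absorption) is the easy part; the estimate on $\deg V_3(\beta^{J_*})$ that you defer as "routine bookkeeping" is where all the work of the paper lies.
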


\begin{lemma}
\label{lem4.28} The inequality in Theorem~\ref{thm77} for $L-1$ factors and
the inequality in Theorem~\ref{thm1.6} for $L$ factors imply the inequality
in Theorem~\ref{thm77} for $L$ factors.
\end{lemma}

\begin{proof}
If the braid $\beta ^{A_{\ast }}=x_{1}^{a_{1}}x_{2}^{a_{2}}\dots
x_{2}^{a_{2L}}$ has all exponents $\geq 1,$ then Theorem~\ref{thm1.6} $(L)$
gives the result. If at least one exponent is zero, then $\beta ^{A_{\ast
}}= $ $\beta ^{A_{\ast }^{\prime }},$ where $A_{\ast }^{\prime }$ is a
sequence of length $2(L-1)$ obtained from $A_{\ast }$ by deletion of a zero
exponent and concatenation of its neighbors: as an example, if \ $A_{\ast
}=(2,1,0,3,4,0), $ then $A_{\ast }^{\prime }$ can be $(2,4,4,0)$\ or $%
(6,1,0,3)$\ (any choice gives the braid $x_{1}^{6}x_{2}^{4}).$\ The new
ingredients of $A_{\ast }^{\prime }$ are $D^{\prime }=D,$ $L^{\prime }=L-1,$
\ and $Z^{\prime }=Z-1$ or $Z-2$\ (if we delete one zero and concatenate two
others), therefore%
\begin{equation*}
\deg V_{3}(\beta ^{A_{\ast }})=\deg V_{3}(\beta ^{A_{\ast }^{\prime }})\leq
3D^{\prime }-2L^{\prime }+2Z^{\prime }\leq 3D-2L+2Z.
\end{equation*}
\end{proof}

\begin{proof}[Proof of Theorem~\protect\ref{thm1.6}]
{\emph{a)}}\thinspace \thinspace\ We start induction on $L\geq 1$. If $L=1$, 
$V_{3}(x_{1}^{a_{1}}x_{2}^{a_{2}})=V_{2}(x_{1}^{a_{1}})V_{2}(x_{1}^{a_{2}}),$
and we have, up to a symmetry, the next cases:

\mbox{}

\begin{tabular}{|c|c|c|}
\hline
$(a_{1},a_{2})$ & $\deg V_{3}(x_{1}^{a_{1}}x_{2}^{a_{2}})$ & $3D-2L+2Z$ \\ 
\hline
$(0,0)$ & $2$ & $2$ \\ \hline
$(1,0)$ & $1$ & $3$ \\ \hline
$(1,1)$ & $\leq 0$ & $4$ \\ \hline
$(\geq 2,0)$ & $3a_{1}$ & $3a_{1}$ \\ \hline
$(\geq 2,1)$ & $3a_{1}-1$ & $3a_{1}+1$ \\ \hline
$(\geq 2,\geq 2)$ & $3a_{1}+3a_{2}-2$ & $3a_{1}+3a_{2}-2$ \\ \hline
\end{tabular}

\mbox{}

\mbox{}

\noindent Suppose that $L\geq 2.$ According to Lemma 8 it is enough to show
that any $J_*$-block of the expansion formula for $V_{3}(\beta ^{A_{\ast }})$%
\ $(a_{i}\geq 1)$ 
\begin{equation*}
(s^{2}+1)^{2L}V_{3}(\beta ^{A_{\ast }})=\dsum\limits_{J_{\ast }\in
\{0,1\}^{2L}}\ P_{J_{\ast }}^{A_{\ast }}V_{J_{\ast }}
\end{equation*}%
has $\deg \leq 3D+2L;$ because there is no factor $P_{j}^{[0]},$ we have $%
\deg P_{J_{\ast }}^{A_{\ast }}\leq 3D-J,$\ and it is enough to show that $%
\deg V_{J_{\ast }}\leq J+2L.$\ We begin with terms having a small
contribution:

case 1: $J\leq L-1.$ In this case the $0,1$ sequence $J_{\ast }$ contains at
least two zeros which are neighbors ( $j_{2L}$\ and $j_{1}$\ are neighbors),
we delete both and obtain a new sequence $J_{\ast }^{\prime }$ of length $%
2(L-1)$ with$\ J^{\prime }=J$ and number of zeros$\ Z^{\prime }=2(L-1)-J.$
Theorem~\ref{thm77} $(L-1)$ gives $\deg V_{J_{\ast }}\leq 3J^{\prime
}-2L^{\prime }+2Z^{\prime }=J+2L-Z.$

case 2: $J=L$ but $J_{\ast }$ is different from$\ J_{\ast }^{1,0}$ and $%
J_{\ast }^{0,1}.$ This is similar with case 1 because one can find two zero
neighbors.

Now we are looking for the main contributors:

case 3: $J_{\ast }^{1,0}$ and $J_{\ast }^{0,1}.$ Their Jones polynomials
coincide with $V_{3}(x_{1}^{L})$ of degree $3L=J+2L.$

case 4: $L+1\leq J\leq 2L-1.$ Consider the new sequence $J_{\ast }^{\prime }$
obtained after deletion of all zeros and concatenation of (nonzero)
neighbors, if necessary. For this new sequence we have $J^{\prime }=J,$ $%
Z^{\prime }=0$ and deletion of $i$\ consecutive zeros reduces the length of $%
J_{\ast }$ by $i^{+}=2{\Large (}i-\left\lfloor \frac{i}{2}\right\rfloor 
{\Large )}=i$ (for$\ i$ even) and $i+1$ (for $i$ odd). Denote by $Z_{i}$ the
number of sequences of $i$ consecutive zeros; for example, if $J_{\ast
}=(1,0,1,0,0,1,1,1,1,0),$ then $J_{\ast }^{\prime }=(3,1,1,1)$ and $Z_{1}=2,$
$Z_{2}=1,$ $Z_{\geq 3}=0.$ Therefore the total number of zeros\ in $J_{\ast
}\ $is$\ Z=\dsum\limits_{i\geq 1}iZ_{i}=2L-J$ and the length of $J_{\ast
}^{\prime }$ is $2L^{\prime }=2L-\dsum\limits_{i\geq 1}i^{+}Z_{i}$, so we
can evaluate the degrees: 
\begin{equation*}
\begin{tabular}{ll}
$\deg V_{J_{\ast }}$ & $=\deg V_{J_{\ast }^{\prime }}\leq 3J^{\prime
}-2L^{\prime }=3J-2L+\dsum\limits_{i\geq 1}i^{+}Z_{i}=$ \\ 
& $=3J-2L+2\dsum\limits_{i\geq 1}iZ_{i}-2\dsum\limits_{i\geq 2}\left\lfloor 
\frac{i}{2}\right\rfloor Z_{i}\leq $ \\ 
& $\leq 3J-2L+2(2L-J)=J+2L.$%
\end{tabular}%
\end{equation*}

case 5: $J_{\ast }=(1,1,...,1).$ This is the example studied in Proposition~%
\ref{prop4.20}, with $2L$ factors and we found the general formula%
\begin{equation*}
\deg V_{\Delta}(2L)=4L=J+2L.
\end{equation*}

\emph{Proof of Theorem}~\ref{thm1.6} {\emph{b)}}\thinspace \thinspace\ The
case $L=1$ is a consequence of Proposition~\ref{prop4.1} and Corollary~\ref%
{prop4.3}. The next cases $L=2,3,4$ are given by three tables containing: $%
\delta =\deg (P_{J_{\ast }}^{A_{\ast }})-\deg (P_{111\cdots 1}^{A_{\ast }})$
(we assume $a_{i}\geq 2$), $J_{\ast }=(j_1,\dots ,j_{2L})$, $w$ is a word
conjugate to $x_{1}^{j_{1}}x_{2}^{j_{2}}x_{1}^{j_{2k-1}}x_{2}^{j_{2k}}$, $N$
is the number of positive braids in the same conjugacy class with $w$, $T$
is the leading term of $V_{3}(w)$, and $\deg =\delta +\deg (T)$. In the last
column the top degrees are in bold characters.

\mbox{}

\begin{tabular}{l}
\textbf{Table\,2}. $L=2,\, V_{J_*}= V(x_1^{a_1}x_2^{a_2}x_1^{a_3}x_2^{a_4})$
\\ 
\end{tabular}

\mbox{}

\begin{tabular}{|c|c|c|c|c|c|}
\hline
$\delta$ & $J_{*}=(j_{1},j_{2},j_{3},j_{4})$ & $w$ & $N$ & $T$ & deg \\ 
\cline{1-6}
4 & 0000 & 1 & 1 & $s^{2}$ & 6 \\ 
3 & 1000 & $x_{1}$ & 4 & $-s$ & 4 \\ 
2 & 1100 & $x_{1}x_{2}$ & 4 & 1 & 2 \\ 
2 & 1010 & $x_{1}^{2}$ & 2 & $s^{6}$ & \textbf{8} \\ 
1 & 1110 & $x_{1}^{2}x_{2}$ & 4 & $-s^{5}$ & 6 \\ 
0 & 1111 & $x_{1}^{3}x_{2}$ & 1 & $-s^{8}$ & \textbf{8} \\ \hline
\end{tabular}

\mbox{}

\mbox{}\newpage

\begin{tabular}{l}
\textbf{Table\,3}. $L=3,\, V_{J_*}=
V(x_1^{a_1}x_2^{a_2}x_1^{a_3}x_2^{a_4}x_1^{a_5}x_2^{a_6}) $ \\ 
\end{tabular}

\mbox{}

\begin{tabular}{|c|c|c|c|c|c|}
\hline
$\delta$ & $J_{*}=(j_{1},j_{2},j_{3},j_{4},j_{5},j_{6})$ & $w$ & $N$ & $T$ & 
deg \\ \cline{1-6}
6 & 000000 & 1 & 1 & $s^{2}$ & 8 \\ 
5 & 100000 & $x_{1}$ & 6 & $-s$ & 6 \\ 
4 & 110000 & $x_{1}x_{2}$ & 9 & 1 & 4 \\ 
4 & 101000 & $x_{1}^{2}$ & 6 & $s^{6}$ & 10 \\ 
3 & 111000 & $x_{1}^{2}x_{2}$ & 18 & $-s^{5}$ & 8 \\ 
3 & 101010 & $x_{1}^{3}$ & 2 & $s^{9}$ & \textbf{12} \\ 
2 & 111100 & $x_{1}^{3}x_{2}$ & 12 & $-s^{8}$ & 10 \\ 
2 & 110110 & $x_{1}^{2}x_{2}^{2}$ & 3 & $s^{10}$ & \textbf{12} \\ 
1 & 111110 & $x_{1}^{4}x_{2}$ & 6 & $-s^{11}$ & \textbf{12} \\ 
0 & 111111 & $x_{1}^{2}x_{2}x_{1}^{2}x_{2}$ & 1 & $2s^{12}$ & \textbf{12} \\ 
\hline
\end{tabular}

\mbox{}

\mbox{}

\begin{tabular}{l}
\textbf{Table\,4}. $L=4,\, V_{J_*}=
V(x_1^{a_1}x_2^{a_2}x_1^{a_3}x_2^{a_4}x_1^{a_5}x_2^{a_6}x_1^{a_7}x_2^{a_8}) $
\\ 
\end{tabular}

\mbox{}

\begin{tabular}{|c|c|c|c|c|c|}
\hline
$\delta$ & $J_{*}=(j_{1},j_{2},j_{3},j_{4},j_{5},j_{6},j_{7},j_{8})$ & $w$ & 
$N$ & $T$ & deg \\ \cline{1-6}
8 & 00000000 & 1 & 1 & $s^{2}$ & 10 \\ 
7 & 10000000 & $x_{1}$ & 8 & $-s$ & 8 \\ 
6 & 11000000 & $x_{1}x_{2}$ & 16 & 1 & 6 \\ 
6 & 10100000 & $x_{1}^{2}$ & 12 & $s^{6}$ & 12 \\ 
5 & 11100000 & $x_{1}^{2}x_{2}$ & 48 & $-s^{5}$ & 10 \\ 
5 & 10101000 & $x_{1}^{3}$ & 8 & $s^{9}$ & 14 \\ 
4 & 11110000 & $x_{1}^{3}x_{2}$ & 55 & $-s^{8}$ & 12 \\ 
4 & 11011000 & $x_{1}^{2}x_{2}^{2}$ & 13 & $s^{10}$ & 14 \\ 
4 & 10101010 & $x_{1}^{4}$ & 2 & $s^{12}$ & \textbf{16} \\ 
3 & 11111000 & $x_{1}^{4}x_{2}$ & 48 & $-s^{11}$ & 14 \\ 
3 & 10101101 & $x_{1}^{3}x_{2}^{2}$ & 8 & $s^{13}$ & \textbf{16} \\ 
2 & 11111100 & $x_{1}^{2}x_{2}x_{1}^{2}x_{2}$ & 12 & $2s^{12}$ & 14 \\ 
2 & 11110110 & $x_{1}^{5}x_{2}$ & 16 & $-s^{14}$ & \textbf{16} \\ 
1 & 11111110 & $x_{1}^{3}x_{2}x_{1}^{2}x_{2}$ & 8 & $s^{15}$ & \textbf{16}
\\ 
0 & 11111111 & $x_{1}^{3}x_{2}x_{1}^{3}x_{2}$ & 1 & $-s^{16}$ & \textbf{16}
\\ \hline
\end{tabular}

\mbox{}

\mbox{}

From these tables, one can compute the degree and the leading term of $%
P_{J_{*}}^{A_{*}}V_{J_{*}}$ using the formula 
\begin{equation*}
\delta+\deg (T)+\sum_{i=1}^{2L}(3a_{i}-1)=\deg +3D-2L.
\end{equation*}
Next, using the top degree from the table with the corresponding coefficient
and subtracting the degree of the denominator $(s^{2}+1)^{2L}$ we get the
result. For instance, using the Table 4, the leading coefficient of $%
V_3(x_1^{a_1}x_2^{a_2}x_1^{a_3}x_2^{a_4}x_1^{a_5}x_2^{a_6}x_1^{a_7}x_2^{a_8}) 
$, $a_i\geq 2$, is 
\begin{equation*}
2+8-16+8-1=1
\end{equation*}
and its degree is 
\begin{equation*}
16+3D-2\cdot4-2\cdot8=3D-8.
\end{equation*}
\end{proof}

As we can see from the tables, there are braids $\beta ^{A_{\ast }}$ with
positive exponents having maximal degree $\deg V_{3}(\beta ^{A_{\ast
}})=3D-2L$, but not all their exponents are $\geq 2$; other examples with
maximal degree are given by the families $V_{3}(a_{1},1,\dots ,1)$ and $%
V_{3}(a_{1},a_{2},1,\dots ,1)$, $a_{1},a_{2}\geq 2$, $2L$ indices, with
leading terms $s^{3D-2L}$ if $L\equiv 0(\func{mod}3)$ and $-s^{3D-2L}$ if $%
L\equiv 1(\func{mod}3)$. These examples shows that combinatorics of $J_{\ast
}$-blocks of maximal degree in the expansion formula is not so simple.

\begin{conjecture}
\label{con4.15} If $a_{i}\geq2$, the leading term of $%
V_{3}(x_1^{a_1}x_2^{a_2}\dots x_1^{a_{2L-1}}x_2^{a_{2L}})$ is $s^{3D-2L}$.
\end{conjecture}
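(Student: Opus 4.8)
The plan is to prove Conjecture~\ref{con4.15} by generalizing the inductive machinery already developed for $L\leq 4$ in the proof of Theorem~\ref{thm1.6}, where the tables exhibit the crucial cancellation $2+8-16+8-1=1$ for the leading coefficient. The statement asserts two things simultaneously: that $\deg V_3(\beta^{A_*})=3D-2L$ (which is already the equality case of Theorem~\ref{thm1.6}\,a) and that the leading coefficient is exactly $+1$. I would attack this through the expansion formula of Theorem~\ref{thm1.2}, writing
\begin{equation*}
(s^2+1)^{2L}V_3(\beta^{A_*})=\sum_{J_*\in\{0,1\}^{2L}}P_{J_*}^{A_*}V_{J_*},
\end{equation*}
and, exactly as in case~4 and case~5 of the earlier proof, isolating those $J_*$-blocks of top degree $3D+2L$ (equivalently, $\delta+\deg(T)$ maximal). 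Since Lemma~\ref{lem4.26} gives $\deg P_{J_*}^{A_*}=3D-J$ when $a_i\geq2$ for all $i$, the contributing blocks are precisely those where $\deg V_{J_*}=J+2L$, and I would need to sum their leading coefficients after dividing out the $(s^2+1)^{2L}=s^{4L}+\dots$ denominator contribution.

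First I would set up the induction on $L$, using Theorem~\ref{thm1.6}\,b) as the base for $L\leq4$. The key combinatorial step is to classify, for general $L$, all $J_*\in\{0,1\}^{2L}$ whose reduced braid word $\beta^{J_*}$ (after concatenation of nonzero neighbors, as in case~4) yields $\deg V_{J_*}=J+2L$; by Proposition~\ref{prop4.20} these are governed by the $V_\Delta(n)$ formulae, so the maximal-degree blocks correspond to the reduced words of the form $x_1^{p_1}x_2^{q_1}\dots$ whose Jones polynomial attains degree $4\cdot(\text{number of factors}/2)$. Next I would organize the leading coefficients of these top blocks according to the number and parity of the zero-runs $Z_i$ appearing in case~4, tracking the sign $(-1)$ contributed by odd-length alternating reductions versus the coefficient $2$ coming from the fully-alternating block $J_*^{1,0}$/$J_*^{0,1}$ (the $V_3(x_1^L)$ term) and the block $J_*=(1,\dots,1)$ studied in Proposition~\ref{prop4.20}.

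The main obstacle, which is exactly what the closing paragraph before the conjecture warns about, is that the combinatorics of the maximal-degree $J_*$-blocks is genuinely intricate: as $L$ grows the number of top-degree contributors grows and one must prove an exact alternating-sum identity showing their leading coefficients telescope to $+1$ after the $s^{4L}$ normalization. I expect the cleanest route is to encode each reduced maximal block by its composition of $\{1,2\}$-parts (the lengths of the $x_1$- and $x_2$-blocks after concatenation) and to recognize the resulting signed count as a determinant or a specialization of the transfer-matrix/Jordan structure from Remark~\ref{rem4.18}, where $J_3$ and the diagonal blocks $s^{6},s^{12},s^{18}$ already predict the leading behaviour. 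Concretely, I would aim to prove the identity
\begin{equation*}
\sum_{\text{top }J_*}\mathrm{coeff}\,V_{J_*}=s^{4L}\cdot(+1)
\end{equation*}
by a sign-reversing involution on non-extremal configurations, leaving a single fixed point that accounts for the $+1$.

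The final step is bookkeeping: once the alternating sum of top-block leading coefficients is shown to equal the coefficient of $s^{3D+2L}$ and to be nonzero (equal to $+1$), division by $(s^2+1)^{2L}$ lowers the degree by $4L$ to give $\deg V_3(\beta^{A_*})=3D+2L-4L=3D-2L$ with leading coefficient $+1$, completing the induction. I would verify that the involution argument respects the inductive hypothesis for $L-1$ used implicitly when a zero-deletion in case~4 reduces a length-$2L$ sequence to a shorter one, so that no circularity arises and the base cases $L\leq4$ from the three tables genuinely seed the recursion.
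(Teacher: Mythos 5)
This statement is not proved in the paper at all: it is stated as Conjecture~\ref{con4.15}, and the paragraph immediately preceding it explains why the authors stop short, namely that the combinatorics of the maximal-degree $J_{*}$-blocks in the expansion formula ``is not so simple.'' Your proposal does not close this gap. It is a research plan whose every step up to the decisive one is a restatement of what the paper already does (expansion formula, Lemma~\ref{lem4.26}, the case analysis from the proof of Theorem~\ref{thm1.6}, the tables for $L\leq 4$), and whose decisive step --- the exact signed-sum identity for the leading coefficients of the top-degree blocks --- is not proved but only announced, with a hoped-for ``sign-reversing involution'' that is never constructed. Announcing that an involution should exist with a single fixed point is not an argument; for $L=4$ the cancellation is $2+8-16+8-1=1$, i.e.\ $18$ positive units against $17$ negative ones distributed over conjugacy classes of very different sizes $N$, and you give no pairing rule, no candidate statistic, and no verification even for $L=5$.

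Two further concrete obstructions make the sketch unlikely to work as stated. First, the signs of the leading terms of the block polynomials $V_{J_{*}}$ are governed by the $V_{\Delta}(n)$ formulae of Proposition~\ref{prop4.20} and hence oscillate with period~$6$ (equivalently, the paper notes families with leading term $s^{3D-2L}$ when $L\equiv 0 \pmod 3$ and $-s^{3D-2L}$ when $L\equiv 1 \pmod 3$); any involution must be compatible with this mod-$3$/mod-$6$ periodicity, which your composition-of-$\{1,2\}$-parts encoding does not address. Second, your induction does not actually propagate: the inductive hypothesis of the conjecture concerns exponents $a_i\geq 2$, whereas the blocks $V_{J_{*}}$ appearing in the expansion are Jones polynomials of words with exponents $0$, $1$, or small concatenations thereof, so the conjecture for $L-1$ gives you no control over their leading terms; the paper obtains those leading terms for $L\leq 4$ only by direct computation (the columns $w$, $N$, $T$ of Tables~2--4), and no mechanism is offered to generate the analogous data for $L+1$ from $L$. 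Until the signed-sum identity is actually established in some form, the statement remains exactly what the paper says it is: a conjecture.
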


\section{There are few unit polynomials in a row}

\label{sec5} Now we apply the recurrence relation to evaluate the number of
solutions of the equation $V(e)=1$, where $V(e)=V_{n}(x_{i_{1}}^{a_{1}}\dots
x_{i_{j}}^{e}\dots x_{i_{k}}^{a_{k}})$, with the same conventions: the
sequence $x_{i_{1}},\dots,x_{i_{k}}$ of generators of $\mathcal{B}_{n}$ is
fixed, the exponents $a_{1},\dots,\widehat{a_{j}},\dots a_{k}$ are fixed,
and $e$ is an arbitrary integer.

\begin{proposition}
\label{prop5.1} a) If $V(e)$ and $V(e+1)$ are polynomials in $s$ and $k\geq2$%
, then $V(e+k)$ is a polynomial different from $1$.

b) If $V(e)$ and $V(e-1)$ are polynomials in $s^{-1}$ and $k\geq2$, then $%
V(e-k)$ is a polynomial in $s^{-1}$ different from $1$.
\end{proposition}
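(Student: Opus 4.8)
The plan is to reduce each statement to an estimate on the order (for part a) or the degree (for part b) of the Jones polynomial, exploiting the elementary fact that a nonzero Laurent polynomial equals the constant $1$ only when its order and its degree are both $0$. In particular, a polynomial in $s$ whose order is $\geq 1$ has vanishing constant term and hence cannot equal $1$; symmetrically, a polynomial in $s^{-1}$ whose degree is $\leq -1$ cannot equal $1$. Since a Jones polynomial is never $0$, order and degree are always well defined, so it suffices to push the order up (resp. the degree down) past the value $0$.

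For part a) I would invoke Proposition~\ref{prop3.5}(b) directly. As $V(e)$ and $V(e+1)$ are polynomials in $s$, both have nonnegative order, so $\min(\mathrm{ord}\,V(e),\mathrm{ord}\,V(e+1))\geq 0$. Applying the estimate with $m=k\geq 2$ gives
\begin{equation*}
\mathrm{ord}\,V(e+k)\geq \min(\mathrm{ord}\,V(e),\mathrm{ord}\,V(e+1))+(k-1)\geq k-1\geq 1.
\end{equation*}
Thus $V(e+k)$ is a polynomial in $s$ (its order is $\geq 0$) with vanishing constant term, and being nonzero it differs from $1$.

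For part b) I would argue symmetrically, with the degree replacing the order, by means of Proposition~\ref{prop3.6}(b). Since $V(e)$ and $V(e-1)$ are polynomials in $s^{-1}$, both have degree $\leq 0$, whence $\max(\deg V(e-1),\deg V(e))\leq 0$. Taking $m=k\geq 2$ yields
\begin{equation*}
\deg V(e-k)\leq \max(\deg V(e-1),\deg V(e))-(k-1)\leq -(k-1)\leq -1.
\end{equation*}
Hence $V(e-k)$ has no constant term and, as a nonzero polynomial in $s^{-1}$, is different from $1$. I note that the backward recurrence $V(e-2)=(s^{-3}-s^{-1})V(e-1)+s^{-4}V(e)$ underlying Proposition~\ref{prop3.6} is valid for every $e$, so the degree bound applies here without any sign restriction on $e$.

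There is no genuine obstacle here: once nonvanishing of the Jones polynomial is granted, both parts are immediate corollaries of the order and degree estimates of Propositions~\ref{prop3.5} and~\ref{prop3.6}. The only step that wants a word of care is the translation of the desired conclusion ``$V\neq 1$'' into the language of order and degree---namely that a nonzero Laurent polynomial equal to $1$ must have both order and degree zero---which the two displayed inequalities rule out precisely because $k\geq 2$.
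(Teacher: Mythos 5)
Your proof is correct, but it takes a different route from the paper's. The paper argues in one step, directly from the recurrence: by induction $V(e+k)$ is a polynomial in $s$ for all $k\geq 0$, and if $V(e+k)=1$ for some $k\geq 2$, then $1=(s^{3}-s)V(e+k-1)+s^{4}V(e+k-2)$, whose right-hand side is divisible by $s$ (both $V(e+k-1)$ and $V(e+k-2)$ being polynomials in $s$), hence has zero constant term --- a contradiction; part b) is symmetric. You instead iterate the estimates of Proposition~\ref{prop3.5}(b) and Proposition~\ref{prop3.6}(b) to get the quantitative bounds $\mathrm{ord}\,V(e+k)\geq k-1\geq 1$ and $\deg V(e-k)\leq -(k-1)\leq -1$, and then conclude because a nonzero Laurent polynomial with vanishing constant term cannot equal $1$. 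The underlying mechanism is the same --- those propositions are themselves direct consequences of Theorem~\ref{thm1.1} and encode exactly the divisibility the paper exploits --- but the trade-offs differ: your version proves strictly more (the order grows, and the degree drops, at least linearly in $k$), at the price of importing the nonvanishing of the Jones polynomial so that orders and degrees are well defined, whereas the paper's contradiction argument needs no such input. The two points you flagged did genuinely need care, and you handled both correctly: the nonvanishing fact is legitimate in this paper (it is invoked in the second proof of Proposition~\ref{prop1.4}), and the hypothesis $e<0$ in Proposition~\ref{prop3.6} is indeed inessential, since its proof uses only the backward form of the recurrence, valid for every $e$; without that remark, part b) would have had a gap for $e\geq 0$.
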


\begin{proof}
a) From the recurrence relation $V(e+k)$ is a polynomial in $s$ for $k\geq2$%
. If $V(e+k)=1$ for some $k\geq2$, then $1=(s^{3}-s)V(e+k-1)+s^{4}V(e+k-2)$,
and this is impossible with polynomials. Similarly for part b).
\end{proof}

An obvious consequence of Propositions~\ref{prop3.5} and \ref{prop3.6} is
the fact that in any sequence $\big(V(e)\big)_{e\in \mathbb{Z}}$ there are
only finitely many terms equal to $1$. Now we prove a stronger statement: in
such a sequence there are at most two polynomials equal to $1$, and in this
case they are very close:

\begin{lemma}
\label{lem5.3} The sequence $(V(e))_{e\in \mathbb{Z}}$ could contain at most
two terms equal to $1$. If $V(a)=V(b)=1$, $a\neq b$, then $|a-b|=1\,\mbox{or}%
\,\,2$.
\end{lemma}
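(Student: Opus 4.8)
The plan is to leverage the recurrence relation from Theorem~\ref{thm1.1}, namely $V(e+2)=(s^3-s)V(e+1)+s^4 V(e)$, together with the degree and order control already established in Propositions~\ref{prop3.5} and~\ref{prop3.6}. By those two propositions, if we pass to sufficiently large $e$ the polynomial $V(e)$ lives in $s$ (order increasing), while for sufficiently negative $e$ it lives in $s^{-1}$. So there is a bounded window of indices $e$ where $V(e)$ can be a genuine (two-sided) Laurent polynomial, and outside this window $V(e)$ is a one-sided polynomial. The key structural input is Proposition~\ref{prop5.1}: among three consecutive indices, at most two can yield the value $1$, because if $V(e)$ and $V(e+1)$ are both (say) polynomials in $s$, then $V(e+k)=1$ for $k\geq 2$ is impossible, since $1=(s^3-s)V(e+k-1)+s^4 V(e+k-2)$ cannot hold for polynomials in $s$ (comparing lowest-degree terms forces a contradiction).

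First I would argue that two unit values cannot be separated by a gap of more than $2$. Suppose $V(a)=V(b)=1$ with $a<b$. Pick $m$ large enough (using Proposition~\ref{prop3.5}) that for all indices $\geq m$ the entries are honest polynomials in $s$; symmetrically pick $m'$ small enough that for indices $\leq m'$ the entries are polynomials in $s^{-1}$. If $a$ and $b$ both lie in the ``positive'' regime where the sequence consists of polynomials in $s$, then Proposition~\ref{prop5.1}(a) applied to the pair $V(a),V(a+1)$ (both polynomials in $s$) immediately forces $b-a<2$, i.e. $b-a=1$. The symmetric statement handles the case where both lie in the negative regime. The interesting obstacle is the transitional middle zone, where one of $a,b$ sits where the sequence is switching character.

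The hard part will be ruling out a unit value $V(a)=1$ and a second unit value $V(b)=1$ that straddle the transition, with $|a-b|>2$, using only local recurrence information. Here the strategy is to note that once we have two consecutive indices whose entries are both polynomials in $s$ (which must happen for all large indices), Proposition~\ref{prop5.1}(a) forbids any later unit, so $b$ cannot be far to the right of $a$; and dually Proposition~\ref{prop5.1}(b) forbids earlier units once two consecutive entries are polynomials in $s^{-1}$. The only way to have widely separated units is therefore to have them placed so that no such ``double-polynomial'' pair intervenes between them. I would show this is impossible by tracking the order and degree monotonicity: Proposition~\ref{prop3.5}(b) guarantees $\operatorname{ord} V(e+m)\to\infty$, so for $e$ large enough $\operatorname{ord} V(e)>0$, meaning $V(e)$ has no negative powers and $V(e)\neq 1$ cannot be ``undone''; the recurrence then propagates the polynomial-in-$s$ property to all subsequent indices, after which no unit can occur.

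Concretely, then, the proof reduces to a finite case check once the windows are identified. Combining the three regimes, any pair $V(a)=V(b)=1$ with $a\neq b$ must have the two indices close together, and the sharpest estimate the recurrence permits—because a single consecutive-pair of polynomial entries already kills everything two steps away—yields $|a-b|\leq 2$. To conclude $|a-b|=1$ or $2$, I would simply observe that $a\neq b$ excludes $0$, completing the lemma. The main technical care is in verifying that the ``polynomial in $s$'' and ``polynomial in $s^{-1}$'' regimes genuinely overlap or abut in such a way that every separated pair is caught by one of the two parts of Proposition~\ref{prop5.1}; this is exactly where the order/degree growth estimates of Propositions~\ref{prop3.5} and~\ref{prop3.6} do the essential work.
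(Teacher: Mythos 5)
Your reduction correctly handles the easy regimes, but the step you yourself flag as ``the hard part'' is exactly where the proof fails, and the tools you invoke cannot complete it. Suppose $V(a)=V(b)=1$ with $b-a\geq 3$. As you note, Proposition~\ref{prop5.1}(a) applied to the pair $(V(a),V(a+1))$ settles the matter unless $\mathrm{ord}\,V(a+1)\leq -1$, and Proposition~\ref{prop5.1}(b) applied to $(V(b),V(b-1))$ settles it unless $\deg V(b-1)\geq 1$; so the whole content of the lemma is this transitional case. But there, order/degree monotonicity yields no contradiction. From Theorem~\ref{thm1.1}, the two summands in $V(e+2)=(s^{3}-s)V(e+1)+s^{4}V(e)$ have distinct orders at every step of the forward induction, so the bottom terms never cancel and $\mathrm{ord}\,V(a+n)=(n-1)+\mathrm{ord}\,V(a+1)$ for all $n\geq 1$; symmetrically, $\deg V(b-n)=\deg V(b-1)-(n-1)$. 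The assignment $\mathrm{ord}\,V(a+1)=1-(b-a)$ and $\deg V(b-1)=(b-a)-1$ satisfies every one of these constraints, as well as Propositions~\ref{prop3.5} and~\ref{prop3.6}, for an \emph{arbitrary} gap $b-a$: all intermediate terms then have negative order and positive degree, so no ``double-polynomial'' pair ever intervenes between $a$ and $b$, and Proposition~\ref{prop5.1} is never applicable. In other words, the negation of the lemma is consistent with all the degree and order information you propose to use, so no argument built only from that information can prove it.

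What is needed is an exact algebraic (divisibility) constraint, and this is how the paper argues. Set $Q_{n}=V(a+n)$, so $Q_{0}=1$; the closed-form solution of the recurrence is
\begin{equation*}
Q_{n}=\frac{1}{s^{3}+s}\left[ (Q_{1}+s)s^{3n}+(s^{3}-Q_{1})(-s)^{n}\right] .
\end{equation*}
Imposing $Q_{n}=1$ for $n=b-a\geq 1$ determines $Q_{1}$ as an explicit rational function of $s$, for instance $Q_{1}=-s-\frac{s^{2}+1}{s^{n-1}(s^{n}+1)}$ when $n$ is even, and one checks directly that this expression is a Laurent polynomial only for $n\in \{1,2\}$. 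Since $Q_{1}=V(a+1)$ is a Jones polynomial, hence a Laurent polynomial, this forces $|a-b|\leq 2$; moreover the explicit values of $Q_{1}$ in the two surviving cases ($Q_{1}=1$ for $n=1$, $Q_{1}=-s-s^{-1}$ for $n=2$) show that no third unit term can occur. Note that already for $n=3$ the obstruction is the non-divisibility $(s^{4}-s^{2}+1)\nmid (s^{3}+1)$, an integrality statement invisible to degree and order bookkeeping. So in the transitional case your sketch must be replaced by solving the recurrence for $Q_{1}$ and checking Laurent-polynomiality, which is precisely the paper's proof.
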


\begin{proof}
Suppose $V(a)=1$ and $V(b)=1$ ($a<b$). We can take $Q_{n}=V(n+a)$, where $%
n\in \mathbb{Z}$. Hence $Q_{0}=1$. From the recurrence relation we have 
\begin{equation*}
Q_{n}=\frac{1}{s^{3}+s}[(Q_{1}+s)s^{3n}+(s^{3}-Q_{1})(-s)^{n}].
\end{equation*}
Let us suppose that $Q_{n}=1$ for a positive $n$; in our case $b-a$ is such
an example. If $n$ is even, we have $%
s^{3}+s=(s^{3n}-s^{n})Q_{1}+s^{3n+1}+s^{n+3}.$ This implies $Q_{1}=\frac{1}{%
s^{n}(s^{2n}-1)}[s^{3}+s-s^{3n+1}-s^{n+3}]$ $=-s-\frac{1}{s^{n-1}(s^{n}+1)}%
(s^{2}+1).$ This gives Laurent polynomials only for $n=0$ and $n=2$. In the
last case, $Q_{1}=-s-\frac{1}{s}$, hence only $Q_{0}$ and $Q_{2}$ are equal
to $1$. If $n$ is odd, we have $s^{3}+s=(s^{3n}+s^{n})Q_{1}+s^{3n+1}-s^{n+3}$%
. This implies $Q_{1}=\frac{1}{s^{n}(s^{2n}+1)}[s^{3}+s-s^{3n+1}+s^{n+3}]$ $%
=-s+\frac{1}{s^{n}(s^{2n}+1)}s(s^{2}+1)(s^{n}+1)$. Laurent polynomials are
obtained only for $n=0$ and $n=1$. In this last case, $Q_{0}=Q_{1}=1$ and $%
Q_{2}\ne 1$.
\end{proof}

\begin{example}
\label{ex5.4} The closures of the braids $x_{1}^{-1}$, $x_{1}^{0}$, and $%
x_{1}^{1}$ in $\mathcal{B}_{2}$ give the links in the following figure: 

\begin{picture}(360,100)
\put(73,43){\line(1,1){30}}   \put(73,73){\line(1,-1){10}} 
\put(263,43){\line(-1,1){30}} \put(103,43){\line(-1,1){10}} 
\put(130,40){\line(1,1){7}} \put(150,60){\line(-1,-1){7}} 
\multiput(70,40)(80,0){3}{$\bullet$} \multiput(100,40)(80,0){3}{$\bullet$}
\multiput(70,70)(80,0){3}{$\bullet$} \multiput(100,70)(80,0){3}{$\bullet$}
\multiput(153,43)(30,0){2}{\line(0,1){30}} \put(89,12){$\widehat{x_1^{-1}}$}  \multiput(111,43)(80,0){3}{\oval(15,8)[b]} \put(169,12){ $\widehat{1}$ } \multiput(111,73)(80,0){3}{\oval(15,8)[t]} \put(249,12){ $\widehat{x_1}$ } 
\multiput(100,43)(80,0){3}{\oval(55,16)[b]} \put(233,43){\line(1,1){10}} 
\multiput(100,73)(80,0){3}{\oval(55,16)[t]} \put(263,73){\line(-1,-1){10}} 
\multiput(128,43)(80,0){3}{\line(0,1){30}} 
\multiput(119,43)(80,0){3}{\line(0,1){30}} 
\end{picture}

\noindent and the corresponding Jones polynomials are: $V_{2}(-1)=1$, $%
V_{2}(0)=-s-s^{-1}$, and $V_{2}(1)=1$. This shows that the case $%
V(a)=V(a+2)=1$ is possible.
\end{example}

\begin{lemma}
\label{lem5.5} Let $\beta(e)=\alpha x_{i}^{e}\gamma$ and $\beta(e+1)=\alpha
x_{i}^{e+1}\gamma$ be two braids in $\mathcal{B}_{n}$. Then $\widehat{\beta
(e)}$ and $\widehat{\beta (e+1)}$ cannot be simultaneously knots. In
particular, $V(e)$ and $V(e+1)$ evaluated at $1$ cannot be $1$ at the same
time.
\end{lemma}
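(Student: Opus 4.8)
The plan is to translate the topological condition ``$\widehat{\beta}$ is a knot'' into a statement about the parity of the underlying permutation, and then to exploit the fact that $\beta(e)$ and $\beta(e+1)$ differ by a single generator $x_i$. First I would recall the canonical homomorphism $\pi\colon\mathcal{B}_n\to S_n$ sending each generator $x_i$ to the transposition $(i\ i+1)$. The number of components of the closure $\widehat{\beta}$ equals the number of cycles in the cycle decomposition of $\pi(\beta)$; in particular $\widehat{\beta}$ is a knot exactly when $\pi(\beta)$ is an $n$-cycle. Since every $n$-cycle is a product of $n-1$ transpositions, all $n$-cycles share the signature $(-1)^{n-1}$.

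Next I would compose with the sign homomorphism to obtain $\epsilon=\mathrm{sgn}\circ\pi\colon\mathcal{B}_n\to\{\pm1\}$, which sends every generator $x_i$ to $-1$. Writing $\beta(e)=\alpha x_i^e\gamma$ and $\beta(e+1)=\alpha x_i^{e+1}\gamma$, multiplicativity of $\epsilon$ gives $\epsilon(\beta(e+1))=\epsilon(\alpha)(-1)^{e+1}\epsilon(\gamma)=-\epsilon(\beta(e))$, so the two braids have opposite signatures. If both closures were knots, then $\pi(\beta(e))$ and $\pi(\beta(e+1))$ would both be $n$-cycles and would therefore share the signature $(-1)^{n-1}$, contradicting the opposite-sign relation. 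This disposes of the first assertion; the argument is entirely elementary, and the only thing to pin down is the standard correspondence between components of $\widehat{\beta}$ and cycles of $\pi(\beta)$.

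For the final sentence I would use the specialization at $s=1$ (that is, $q=1$). From the skein relation, at $q=1$ a crossing change leaves $V$ unchanged, so $V_L(1)$ depends only on the number of components $c$ of $L$ and equals its value on the trivial $c$-component unlink, namely $(-2)^{c-1}$ (consistent with $V_2(0)|_{s=1}=-2$ from Proposition~\ref{prop4.1}). Hence $V(e)|_{s=1}=1$ forces $(-2)^{c-1}=1$, i.e.\ $c=1$, so $\widehat{\beta(e)}$ is a knot, and likewise for $e+1$; if both evaluations equalled $1$ we would obtain two simultaneous knots, contradicting the first part. The one point deserving care here is the value $(-2)^{c-1}$ at $q=1$, which I would either cite or derive in a single line from the skein relation together with the unlink computation.
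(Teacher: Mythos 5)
Your proof is correct and takes essentially the same approach as the paper: both reduce ``$\widehat{\beta}$ is a knot'' to $\pi(\beta)$ being an $n$-cycle and then use parity of permutations --- the paper writes $\pi(\beta(e+1))=\pi(\beta(e))\,\pi(\gamma^{-1}x_{i}\gamma)$ and observes that the last factor has signature $-1$, which is exactly your sign-flip computation $\epsilon(\beta(e+1))=-\epsilon(\beta(e))$. The only difference is that the paper leaves the ``in particular'' clause implicit, whereas you correctly supply the standard specialization $V_{L}(1)=(-2)^{c-1}$ (with $c$ the number of components) to deduce it.
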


\begin{proof}
If $\widehat{\beta (e)}$ is a knot, then the associated permutation $\pi
(\beta (e))$ is an $n$-cycle. The permutation associated with $\beta (e+1)$
is $\pi (\alpha x_{i}^{e+1}\gamma )=\pi (\alpha x_{i}^{e}\gamma )\pi (\gamma
^{-1}x_{i}\gamma )$, and this cannot be an $n$-cycle because the signature
of the last factor is $-1$.
\end{proof}

\emph{Proof of Theorem}~\ref{thm1.7} This is a consequence of lemmas~\ref%
{lem5.3} and \ref{lem5.5}.


\section{APPENDIX}

\label{sec6} In~\cite{nizami:08} multiple Fibonacci sequences (and multiple
Fibonacci modules) are introduced. A multiple sequence $(x_{n_{1},\ldots
,n_{p}})_{n_{1},\dots ,n_{p}\in \mathbb{Z}}$ of elements in a ring $\mathcal{%
R}$ is called a \emph{multiple Fibonacci sequence} of type $(\beta ,\gamma
)\in \mathcal{R}^{2}$ if for any $i\in \{1,\ldots ,p\}$ and any $k\in 
\mathbb{Z}$ we have 
\begin{equation*}
a_{n_{1},\dots ,n_{i-1},k+2,n_{i+1},\dots ,n_{p}}=\beta a_{n_{1},\dots
,n_{i-1},k+1,n_{i+1},\dots ,n_{p}}+\gamma a_{n_{1},\dots
,n_{i-1},k,n_{i+1},\dots ,n_{p}}.
\end{equation*}%
The $\mathcal{R}$-module of such multiple sequences is denoted by $\mathcal{F%
}^{[p]}(\beta ,\gamma )$ and it is isomorphic with the tensor product $%
\mathcal{F}(\beta ,\gamma )^{\otimes p}$.

\begin{theorem}
\label{thm6.1} Let $(x_{n_{1},\dots ,n_{p}})_{\geq0}$ be an element in $%
\mathcal{F}^{[p]}(r_{1}+r_{2},-r_{1}r_{2})$.\newline
\textbf{a)} The general term is given by 
\begin{equation*}
x_{n_{1},\dots ,n_{p}}=D^{-p}\sum_{0\leq j_{1},\dots ,j_{p}\leq 1}
S_{j_{1}}^{[n_{1}]}(r_{1},r_{2})\dots
S_{j_{p}}^{[n_{p}]}(r_{1},r_{2})x_{j_{1},\dots ,j_{p}}\,,
\end{equation*}
where $D=r_{2}-r_{1}$, $%
S_{0}^{[n]}(r_{1},r_{2})=r_{1}^{n}r_{2}-r_{1}r_{2}^{n}$, $%
S_{1}^{[n]}(r_{1},r_{2})=r_{2}^{n}-r_{1}^{n}$.\newline
\textbf{b)} The generating function of $(x_{n_{1},\dots ,n_{p}})$ is given
by 
\begin{equation*}
G(t_{1},\dots ,t_{p})=q(t_{1})^{-1}\dots q(t_{p})^{-1}\sum_{0\leq
j_{1},\dots ,j_{p}\leq 1} Q_{j_{1}}(t_{1})\dots
Q_{j_{p}}(t_{p})x_{j_{1},\dots ,j_{p}}\,,
\end{equation*}
where $q(t)=(1-r_{1}t)(1-r_{2}t)$, $Q_{0}(t)=1-(r_{1}+r_{2})t$ and $%
Q_{1}(t)=t$.
\end{theorem}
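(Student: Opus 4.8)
The plan is to reduce everything to the classical single-index case $p=1$ and then propagate coordinate by coordinate, exploiting the tensor decomposition $\mathcal{F}^{[p]}(\beta,\gamma)\cong\mathcal{F}(\beta,\gamma)^{\otimes p}$ recorded just above the statement. First I would note that for $\beta=r_1+r_2$ and $\gamma=-r_1r_2$ the characteristic equation of the defining recurrence is $z^2-(r_1+r_2)z+r_1r_2=(z-r_1)(z-r_2)=0$, so its roots are exactly $r_1$ and $r_2$. Throughout I assume $r_1\neq r_2$, so that $D=r_2-r_1$ is invertible; this is the only place invertibility is used, and it holds in the intended application where $r_1=-s$ and $r_2=s^3$.

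For the base case $p=1$, part \textbf{a}, I would write the general solution of $a_{k+2}=(r_1+r_2)a_{k+1}-r_1r_2\,a_k$ as $a_n=Ar_1^n+Br_2^n$, solve the $2\times2$ system $A+B=a_0$, $Ar_1+Br_2=a_1$ for $A,B$, and then regroup the resulting expression for $a_n$ according to the initial values. The coefficient of $a_0$ comes out as $D^{-1}(r_2r_1^n-r_1r_2^n)=D^{-1}S_0^{[n]}$ and that of $a_1$ as $D^{-1}(r_2^n-r_1^n)=D^{-1}S_1^{[n]}$, which is precisely the asserted formula. For part \textbf{b} I would multiply the recurrence by $t^{n+2}$ and sum over $n\geq0$; the standard manipulation gives $G(t)\bigl(1-(r_1+r_2)t+r_1r_2t^2\bigr)=a_0\bigl(1-(r_1+r_2)t\bigr)+a_1t$, and one recognizes the left factor as $q(t)=(1-r_1t)(1-r_2t)$ and the right-hand side as $Q_0(t)a_0+Q_1(t)a_1$, yielding the $p=1$ generating function.

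For the inductive step I would argue on $p$. Fixing all indices except the first, the slice $(x_{n_1,n_2,\dots,n_p})_{n_1\geq0}$ is an ordinary Fibonacci sequence of type $(\beta,\gamma)$, so the base case expresses it through its two corner slices:
\[
x_{n_1,\dots,n_p}=D^{-1}\sum_{j_1=0}^{1}S_{j_1}^{[n_1]}\,x_{j_1,n_2,\dots,n_p}.
\]
For each fixed $j_1$ the family $(x_{j_1,n_2,\dots,n_p})$ is a multiple Fibonacci sequence in $p-1$ indices, to which the induction hypothesis applies; substituting and collecting gives the claimed $2^p$-term formula with prefactor $D^{-p}$. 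The generating-function identity follows by the same slicing: summing first over $n_1$ produces the factor $q(t_1)^{-1}\sum_{j_1}Q_{j_1}(t_1)(\,\cdot\,)$ by the $p=1$ case, and the remaining $(p-1)$-variable sum is handled by induction. Alternatively, both formulas drop out at once from the tensor isomorphism: the geometric sequences $(r_1^n)$ and $(r_2^n)$ form a basis of $\mathcal{F}(\beta,\gamma)$, so their products index a basis of $\mathcal{F}^{[p]}$, and expanding an arbitrary element in that basis while matching corner values reproduces the coefficients $D^{-p}S_{j_1}^{[n_1]}\cdots S_{j_p}^{[n_p]}$.

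The computations are entirely routine, so the main obstacle is not really computational: the one genuine point to watch is the invertibility of $D=r_2-r_1$, i.e.\ the assumption of distinct characteristic roots, which underlies both the closed form (division by $D$) and the rationality of the generating function (the denominator $q(t)$ factoring with distinct reciprocal roots). I would also remark that the index range $n_i\geq0$ is exactly what makes the generating series a well-defined formal power series, while formula \textbf{a} in fact extends to all integer indices because $\gamma=-r_1r_2$ is a unit, allowing the recurrence to be run backwards.
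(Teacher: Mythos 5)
Your proof is correct: the base case $p=1$ (solving the two-term recurrence with distinct roots $r_1,r_2$ and the standard generating-function manipulation) and the slicing induction on $p$ are exactly the ``elementary computations'' the paper invokes, since its own proof of Theorem~\ref{thm6.1} consists only of a deferral to the reference \cite{nizami:08}. Your explicit caveat about the invertibility of $D=r_2-r_1$ (satisfied in the application, where $r_1=-s$, $r_2=s^3$, working in rational functions of $s$) is a worthwhile addition that the paper leaves implicit.
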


\begin{proof}
These require only elementary computations; one can find all the details in~%
\cite{nizami:08}.
\end{proof}



\end{document}